 \newtheorem{thm}{Theorem}[section]
 \newtheorem{cor}[thm]{Corollary}
 \newtheorem{lemma}[thm]{Lemma}
 \newtheorem{prop}[thm]{Proposition}
 \theoremstyle{definition}
 \newtheorem{definition}[thm]{Definition}
 \theoremstyle{remark}
 \newtheorem{rem}[thm]{Remark}
 \newtheorem*{ex}{Example}
 \numberwithin{equation}{section}
\DeclarePairedDelimiter\ceil{\lceil}{\rceil}
\DeclarePairedDelimiter\floor{\lfloor}{\rfloor}
\title[On the harmonic generalized Cauchy-Kovalevskaya extension and its connection with the Fueter-Sce theorem ]{On the harmonic generalized Cauchy-Kovalevskaya extension and its connection with the Fueter-Sce theorem}
\author[A. De Martino]{Antonino De Martino}
\address{(ADM)
	Politecnico di Milano\\Dipartimento di Matematica\\Via E. Bonardi, 9\\20133
	Milano, Italy
} \email{antonino.demartino@polimi.it}
\author[A.Guzmán]{ Ali Guzmán Adán }
\address{(AGA) (was Postdoc)
Clifford Research Group, Department of Electronics and Information Systems, Faculty of Engineering and Architecture, Ghent University, Krijgslaan 281, 9000 Gent, Belgium.} \email{ali.guzmanadan@ugent.be}
\begin{document}
\date{}

\maketitle

\begin{abstract}

One of the primary objectives of this paper is to establish a generalized Cauchy-Kovalevskaya extension for axially harmonic functions. We demonstrate that the result can be expressed as a power series involving Bessel-type functions of specific differential operators acting on two initial functions. Additionally, we analyze the decomposition of the harmonic CK extension in terms of integrals over the sphere $ \mathbb{S}^{m-1} $ involving functions of plane wave type.

Another key goal of this paper is to explore the relationship between the harmonic Cauchy-Kovalevskaya extension and the Fueter-Sce theorem. The Fueter-Sce theorem outlines a two-step process for constructing axially monogenic functions in $ \mathbb{R}^{m+1}$ starting from holomorphic functions in one complex variable. The first step generates the class of slice monogenic functions, while the second step produces axially monogenic functions by applying the pointwise differential operator $ \Delta_{\mathbb{R}{^{m+1}}}^{\frac{m-1}{2}} $ with $m$ being odd, known as the Fueter-Sce map, to a slice monogenic function.

By suitably factorizing the Fueter-Sce map, we introduce the set of axially harmonic functions, which serves as an intermediate class between slice monogenic and axially monogenic functions. In this paper, we establish a connection between the harmonic CK extension and the factorization of the Fueter-Sce map. This connection leads to a new notion of harmonic polynomials, which we show to form a basis for the Riesz potential. Finally, we also construct a basis for the space of axially harmonic functions.

\end{abstract}

\tableofcontents

\section{Introduction}
The classical Cauchy-Kovalevskaya extension provides a method for characterizing the solutions of certain partial differential equations (PDEs) through their restriction to a submanifold of codimension one. When applied to the Cauchy-Riemann equation, this principle demonstrates that a holomorphic function defined in a suitable region of the complex plane is entirely determined by its restriction to the real axis. Such an extension principle for holomorphic functions has been further generalized within the framework of Clifford analysis which is a higher dimensional generalization of complex analysis and a refinement of harmonic analysis, see \cite{BDS, green}. 
\\ The main topic of Clifford analysis is the study of monogenic functions, i.e. null solutions of the generalized Cauchy-Riemann operator $\mathcal{D}:=\partial_{x_0}+ \partial_{\underline{x}}$, where $\partial_{\underline{x}}:= \sum_{j=1}^m e_j \partial_{x_j}$, in an open set of $ \mathbb{R}^{m+1}$. We denote by $(e_1,...,e_m)$ an orthogonal basis in $ \mathbb{R}^m$ and $ \underline{x}=\sum_{j=1}^m x_j e_j$ is a vector variable defined in $\mathbb{R}^m$. Any monogenic function is determined by its restriction to the hyperplane $x_0=0$. On the other side any real-analytic function $ f( \underline{x})$ defined in a region of $ \mathbb{R}^m$ has a unique monogenic extension $f(x_0, \underline{x})$ called Cauchy-Kovalevskaya (CK) extension, see Theorem \ref{GCK1}.
\\ The first aim of this paper is to study a generalized CK-extension that can generate harmonic functions. In this case we get a CK-extension that depends on two initial functions on which power series of differential operator act. We also express the generalized CK-extension for harmonic functions using integrals over the $(m-1)$-dimensional sphere, involving 1-vectors of plane wave-type functions.
 In Clifford analysis the harmonic functions appear also in relation with the Fueter-Sce theorem

\medskip

In literature a constructive way to build monogenic functions in $\mathbb{R}^4$ is the Fueter theorem, see \cite{F}. This result asserts that every intrinsic holomorphic function induces a  monogenic function. A holomorphic function $f(z)= \alpha(u,v)+i \beta(u,v)$ (with $z=u+iv$) is said to be intrinsic if it is defined on complex domains symmetric with respect to the real axis, and its restriction to the real line is real-valued, see Definition \ref{holoint}.
\\ More than 20 years later M. Sce extended the Fueter's result to $ \mathbb{R}^{m+1}$ for odd values of the dimension $m$, see \cite{S, CSS2}. In this case, given a holomorphic complex function $f$ as above, the Clifford-valued function
$$x \mapsto \Delta_{\mathbb{R}^{m+1}}^{\frac{m-1}{2}} \left( \alpha(x_0, |\underline{x}|)+ \frac{\underline{x}}{|\underline{x}|}\beta(x_0, |\underline{x}|)\right)$$
is in the kernel of the generalized Cauchy-Riemann operator $ \partial_{x_0}+ \partial_{\underline{x}}$. Here $\Delta_{\mathbb{R}^{m+1}}= \sum_{j=0}^m \partial^2_{x_j}$ is the Laplace operator in $m+1$ real variables. We observe that, since $m$ is an odd number, in this case the operator $\Delta_{\mathbb{R}^{m+1}}^{\frac{m-1}{2}}$ is a pointwise differential operator.
\\ In 1997 T. Qian extended Sce's results to all dimensions by using the Fourier multiplier to define the fractional Laplacian, see \cite{Q, TAOBOOK}. This leads to the so well known Fueter-Sce-Qian Theorem, which is a constructive way to build monogenic functions in $\mathbb{R}^{m+1}$ starting from intrinsic holomorphic functions. This link is established by means of the Fueter-Sce-Qian map $\tau_m$ which coincides with $ \Delta_{\mathbb{R}{^{m+1}}}^{\frac{m-1}{2}}$ when $m$ is odd.  
\\ We remark that the range of the Fueter-Sce-Qian map, as well as the range of the generalized CK-extension, is the module of monogenic functions of axial-type. These are monogenic functions of the form $A(x_0,|\underline{x}|)+ \frac{\underline{x}}{|\underline{x}|} B(x_0,|\underline{x}|)$ where the functions $A$ and $B$ are Clifford-valued. We shall denote the module of axial monogenic functions in $ \mathbb{R}^{m+1}$ by $ \mathcal{AM}(\mathbb{R}^{m+1})$, see Definition \ref{axmono}.
\\ It is important to observe that the Fueter-Sce-Qian extension is provided in two steps. The first one turns the holomorphic complex function $f$ into a Clifford-valued function defined in $ \mathbb{R}^{m+1}$ by means of the following map
\begin{equation}
\label{introslice}
\alpha(u,v)+i \beta(u,v) \mapsto \alpha(x_0, |\underline{x}|)+ \frac{\underline{x}}{|\underline{x}|} \beta(x_0, |\underline{x}|).
\end{equation}
This map is called the slice extension and it maps intrinsic holomorphic functions in $ \mathbb{C}$ to slice monogenic functions in $ \mathbb{R}^{m+1}$, which is a Clifford-module denoted by $ \mathcal{SM}(\mathbb{R}^{m+1})$, see Definition \ref{slicemono}. Due to the equivalence between the space of intrinsic holomorphic functions in $ \mathbb{C}$, and the space of $\mathbb{R}$-valued real-analytic functions on $ \mathbb{R}$, denoted by $ \mathcal{A}(\mathbb{R})$, the map \eqref{introslice} can be naturally defined on real-valued functions of one real variable. In this way we obtain a map $S: \mathcal{A}(\mathbb{R}) \otimes \mathbb{R}_m \to \mathcal{SM}(\mathbb{R}^{m+1})$ which factorizes the Fueter-Sce-Qian extension as follows 
\begin{equation}
\label{diagintro}
\mathcal{A}(\mathbb{R}) \otimes \mathbb{R}_m\overset{S}{\longrightarrow} \mathcal{SM}(\mathbb{R}^{m+1})\overset{\tau_m}{\longrightarrow} \mathcal{AM}(\mathbb{R}^{m+1}).
\end{equation}
We refer to the papers \cite{Elb, KQS, D, PQS,Somm} for possible generalization of the Fueter-Sce-Qian mapping theorem.
In \cite{DDG} by combining \eqref{diagintro} and the isomorphism of the generalized CK-extension for monogenic functions
\begin{equation*}
GCK: \mathcal{A}(\mathbb{R}) \otimes \mathbb{R}_m \to \mathcal{AM}(\mathbb{R}^{m+1}),
\end{equation*}  
we get the following relation between Fueter-Sce theorem and the generalized CK-extension:
$$ \tau_m \circ S= \gamma_m GCK \circ \partial_{x_0}^{m-1}.$$
This gives rise to the following commutative diagram

\begin{equation}
\label{diagintro3}
 \begin{tikzcd}[row sep = 3em, column sep = 6em]
 \mathcal{A}(\mathbb{R}) \otimes \mathbb{R}_m \arrow[r, "S", rightarrow] \arrow[d, "\gamma_m \partial_{x_0}^{m-1}", labels=left] & \mathcal{SM}(\mathbb{R}^{m+1}) \arrow[d, "\Delta^{\frac{m-1}{2}}_{\mathbb{R}^{m+1}}"] \\
 		\mathcal{A}(\mathbb{R}) \otimes \mathbb{R}_m \arrow[r, "\textup{GCK}", rightarrow]                  &\mathcal{AM}(\mathbb{R}^{m+1})
 	\end{tikzcd}
 \end{equation}

In \cite{ANST}, it is shown that by factorizing the Fueter-Sce map, represented by the differential operator $ \Delta_{\mathbb{R}^{m+1}}^{\frac{m-1}{2}}$, the set of axially harmonic functions occupies an intermediate position between the set of slice monogenic functions and the set of axially monogenic functions within the Fueter-Sce framework.
 Indeed, we can write the following factorization of the diagram \eqref{diagintro}
\begin{equation}
\label{introdiag2}
\begin{CD}
\textcolor{black}{\mathcal{A}(\mathbb{R}) \otimes \mathbb{R}_m}  @>S>> \textcolor{black}{\mathcal{SM}(\mathbb{R}^{m+1})}  @>\ \  \Delta_{\mathbb{R}^{m+1}}^{\frac{m-3}{2}} \mathcal{D} >>\textcolor{black}{\mathcal{AH}(\mathbb{R}^{m+1})}@>\ \  \mathcal{\overline{D}} >>\textcolor{black}{\mathcal{AM}(\mathbb{R}^{m+1})},
\end{CD}
\end{equation}
where $ \mathcal{\overline{D}}:= \partial_{x_0}- \partial_{\underline{x}}$.
Here $\mathcal{AH}(\mathbb{R}^{m+1})$ is the set of axially harmonic functions, i.e. functions in the kernel of $ \Delta_{\mathbb{R}^{m+1}}$ and of axial type, see Definition \ref{aah}.
\\ By combining \eqref{introdiag2} with the harmonic CK-extension 
$$ HGCK: ( \mathcal{A}(\mathbb{R}) \otimes \mathbb{R}_m)^2 \to \mathcal{AH}(\mathbb{R}^{m+1}),$$
we aim to provide a factorization of the diagram \eqref{diagintro3}.\\
\\ As a direct application of our results, we derive an explicit expression for the action of the operator $ \Delta_{\mathbb{R}{^{m+1}}}^{\frac{m-3}{2}} \mathcal{D}$ when applied to the monomials $(x_0 + \underline{x})^k $, where $ k \in \mathbb{N}$. This leads to the introduction of a new class of harmonic polynomials, defined as follows:  
\begin{equation}
\label{hh}
\mathcal{P}_k^m(x) := \sum_{s=0}^k T_{s}^k(m) x^{k-s} \bar{x}^s, \qquad T_s^k(m) := \binom{k}{s} \frac{\left(\frac{m-1}{2}\right)_{k-s} \left(\frac{m-1}{2}\right)_{s}}{(m-1)_k},
\end{equation}
where $ \overline{x} := x_0 - \underline{x} $.  

Furthermore, we establish that these polynomials are connected to the Chebyshev polynomials and form a basis for the Riesz potential $|1-x|^{-(m-1)} $.
\\ Our final goal is to establish a basis for the set of axially harmonic functions. In the monogenic setting the application of the Fueter-Sce-Qian map to the monomial $(x_0+ \underline{x})^k$ gives a well-known family of monogenic Clifford-Appell polynomials. These polynomials form a basis for the axially monogenic functions.
In the harmonic setting the polynomials $ \mathcal{P}_k^m(x)$ are not a basis for the set of axially harmonic functions. We build a basis by considering harmonic polynomials that comes from the scalar and the 1-vector parts of the monogenic Clifford-Appell polynomials. Finally, we study some Appell-like properties for the polynomials which are basis of the axially harmonic functions.

\medskip

Summarizing in this paper we solve the following problems
\begin{itemize}
\item[\textbf{P1}] establish a generalized CK-extension for axially harmonic functions
\item[\textbf{P2}] obtain a connection between the generalized CK-extension for axially harmonic functions and the Fueter-Sce theorem in order to give a factorization of the diagram \eqref{diagintro3}.
\item[\textbf{P3}] Construct a basis for the set of axially harmonic functions.
\end{itemize}

\emph{Outline of the paper: }
The plan of the paper is as follows. In Section 2 we recall briefly the main notions of Clifford analysis and we also establish some minor results that will be useful in subsequent sections. We also revise how to connect the generalized CK-extension and the Fueter-Sce theorem. In Section 3 we give an answer to \textbf{P1}, by developing a generalized CK-extension for axially harmonic functions. In Section 4 we show how to decompose the generalized CK-extension in terms of integrals of plane waves over the sphere of 1-vectors in $ \mathbb{R}^{m+1}$.  Section 5 is devoted to study a connection between the generalized CK-extension and a factorization of the Fueter-Sce map, this provides a solution to \textbf{P2}. Finally, in Section 6 we introduce the harmonic polynomials \eqref{hh} and we provide a connection of these polynomials with a factorization of the Fueter-Sce map and the generalized CK-extension for axially harmonic functions. In this section we also provide a basis for the set of axially harmonic functions, this solves \textbf{P3}.

\section{Preliminaries}
In this section we fix the notations and we briefly revise some results from Clifford analysis.
\\ Let $ \mathbb{R}_m$ be the real associative Clifford algebra generated by $e_1,...,e_m$ satisfying the relations $e_je_{\ell}+e_{\ell}e_j=-2 \delta_{j \ell}$, for $j$, $ \ell=1,...,m$ where $ \delta_{j \ell}$ is the Kronecker symbol. An element in the Clifford algebra is denoted by 
$$ \sum_{A\subset M} e_A x_A, \qquad x_A \in \mathbb{R},$$
where $A:= \{1,...,m\}$ for any multi-index $M= \{\ell_1,..., \ell_k\}$ with $ \ell_1<...<\ell_k$. We set $e_A:= e_{\ell_1}...e_{\ell_k}$, $e_{\emptyset}=1$ and $|A|=k$. We can write every $x \in \mathbb{R}_m$ as a multivector decomposition
$$ x=\sum_{k=0}^m [x]_k, \qquad \hbox{where} \qquad [x]_k= \sum_{|A|=k} a_A e_A.$$
We denote by $[.]_k: \mathbb{R}_m \to \mathbb{R}_m^{(k)}$ the canonical projection of $ \mathbb{R}_m$ onto the space of $k$-vectors $ \mathbb{R}_m^{(k)}= \hbox{span}_{\mathbb{R}}\{e_A\, : \, |A|=k\}$. We observe that $ \mathbb{R}_m^{(0)}= \mathbb{R}$, whereas the space of 1-vectors $ \mathbb{R}_m^{(1)}$ is isomorphic to $ \mathbb{R}^m$. We note also that $ \mathbb{R}_1 \simeq \mathbb{C}$ and $ \mathbb{R}_2 \simeq \mathbb{H}$, where $\mathbb{H}$ is the set of quaternions.
\\ An important subspace of the real Clifford algebra $ \mathbb{R}_{m}$ is $ \mathbb{R}_{m}^{0} \oplus \mathbb{R}_m^{(1)}$ whose elements are called paravectors. This space can be identified with $ \mathbb{R}^{m+1}$ and we denote its elements by
$$ x=x_0+ \underline{x}= x_0+ \sum_{j=1}^{m} x_j e_j,$$ 
where $ x_0 \in \mathbb{R}$ and $ \underline{x}= \sum_{j=1}^m x_j e_j \in \mathbb{R}^m$. The conjugate of $x$ is denoted by $ \bar{x}=x_0- \underline{x}$ and the Eucldian modulus is given by $ |x|^2=x_0^2+x_1^2+...+x_m^2$. Similarly, for any pair of vectors $ \underline{x}$, $\underline{y} \in \mathbb{R}^m$ one has that
$$ \underline{x} \underline{y}+ \underline{y} \underline{x}=-2 \sum_{j=1}^m x_j y_j=-2 \langle \underline{x}, \underline{y} \rangle,$$
where $\langle \underline{x}, \underline{y} \rangle$ denotes the Euclidean inner product in $ \mathbb{R}^m$, and so, $ \underline{x}^2=-|\underline{x}|^2$ where $| \underline{x}|^2:= \sum_{j=1}^m x_j^2$ is the square of the Euclidean norm of $ \underline{x}$ in $ \mathbb{R}^m$. We shall also make use of the \emph{wedge product} for vectors.
 $\underline{x}$, $\underline{y} \in \mathbb{R}^m$, that is defined by
$$ \underline{x} \wedge \underline{y}= \sum_{j <k}e_j e_k(x_j y_k-x_ky_j)= \frac{1}{2}(\underline{x} \underline{y}- \underline{y} \underline{x}).$$
\medskip

Let $E$ be an open subset of $ \mathbb{R}^m$. In this paper we will mainly consider real-analytic functions with values on the Clifford algebra $ \mathbb{R}_m$. We denote this set of functions by $ \mathcal{A}(E) \otimes \mathbb{R}_m$. Within this module we wil take into account two possible expansions of holomorphic functions of one complex variable to higher dimensions: the \emph{slice monogenic functions} and \emph{axially monogenic functions}.
\medskip 

We start by recalling the definition of slice monogenic function, see \cite{CSS3, CSS4}. Let us denote by $ \mathbb{S}^{m-1}$ the $(m-1)$-sphere of unit 1-vectors in $ \mathbb{R}^{m+1}$, i.e.
$$ \mathbb{S}^{m-1}= \{\underline{x}=x_1e_1+...+x_me_m\, : \, | \underline{x}|=1 \}.$$
Given $ \underline{\omega} \in \mathbb{S}^{m-1}$ we denote the 2-dimensional real subspace of $ \mathbb{R}^{m+1}$ passing through $1$ and $ \underline{\omega} \in \mathbb{S}^{m-1}$ as
$$ \mathbb{C}_{\underline{\omega}}= \{u+v  \underline{\omega}\, : \, u,v \in \mathbb{R}\}.$$
This is an isomorphic copy of the complex plane.

\begin{definition}
\label{slicemono}
Let $ \Omega \subset \mathbb{R}^{m+1}$ be an open set. A function $f: \Omega \to \mathbb{R}_m$ is said to be (left) slice monogenic for any $ \underline{\omega} \in \mathbb{S}^{m-1}$, if the restrictions of $f$ to the complex planes $ \mathbb{C}_{\underline{\omega}}$ are holomorphic on $ \Omega \cap \mathbb{C}_{\underline{\omega}}$, i.e.
$$ (\partial_u+ \underline{\omega}\partial_v) f(u+ \underline{\omega}v)=0.$$
The right $ \mathbb{R}_m$-module of slice monogenic functions on $ \Omega$ is denoted by $ \mathcal{SM}(\Omega)$.
\end{definition}
A slice monogenic function has good properties on suitable sets called \emph{axially slice domains}. Let us recall the definition.
\begin{definition}
\label{sets}
An open set $ \Omega$ of $ \mathbb{R}^{m+1}$ is said to be an \emph{axially symmetric domain} if it satisfies the following conditions:
\begin{itemize}
\item $ \Omega \cap \mathbb{R} \neq \emptyset$,
\item $\Omega \cap \mathbb{C}_{\underline{\omega}}$ is a domain (i.e. an open and connected set) in $ \mathbb{C}_{\underline{\omega}}$ for all $ \underline{\omega} \in \mathbb{S}^{m-1}$
\item $ \Omega$ is a $SO(m)$-invariant with respect to the real axis. This means that if $x_0+ \underline{x} \in \Omega$ then for all $M \in SO(m)$ we have that $x_0+ M \underline{x} \in \Omega$.
\end{itemize}
\end{definition}
We note that if an open set satisfies only the first two bullets of the above definitions is usually called \emph{slice domain}, whereas if the third point is satisfied the set is known as \emph{axially symmetric open set}.

We observe that if we consider $m=1$ in Definition \ref{sets} we reduce to the notion of open set in the complex plane symmetric to the real axis. In this particular case we say that $ \Omega$ is an \emph{intrinsic complex domain}.

Equivalently, one can define axially symmetric slice domains in terms of intrinsic complex domains. A set $\Omega$ in $ \mathbb{R}^{m+1}$ is an axially symmetric slice domain if and only if there exists an intrinsic complex domain $ \Omega_2 \subset \mathbb{C}$ such that
$$\Omega= \Omega_2 \times \mathbb{S}^{m-1}= \{u+ \underline{w}v \,: \, (u,v) \in \Omega_2, \, \underline{\omega} \in \mathbb{S}^{m-1}\}.$$ 

\medskip

In the rest of the paper we will always work with slice monogenic functions defined on axially symmetric slice domains. Since in such domains these functions satisfy good properties, as it is stated in the following representation theorem.

\begin{thm}[Representation theorem \cite{CSS3}]
\label{rapp}
Let $ \Omega \subset \mathbb{R}^{m+1}$ be a an axially symmetric slice domain, and let $ \Omega_2 \subset \mathbb{C}$ be the intrinsic complex domain such that $ \Omega= \Omega_2 \times \mathbb{S}^{m-1}$. Then any function $f \in \mathcal{SM}(\Omega)$ can be written as
$$ f(u+ \underline{\omega}v)= \alpha(u,v)+ \underline{\omega} \beta(u,v),$$
where the functions $ \alpha$, $ \beta$ are Clifford-valued real functions defined in $ \Omega_2$, such that
\begin{equation}
\label{cond}
\alpha(u,v)=\alpha(u,-v), \qquad \beta(u,v)=- \beta(u,-v) \qquad \hbox{for all} \quad (u,v) \in \Omega_2
\end{equation}
and moreover satisfy the Cauchy-Riemann system
\begin{equation}
\label{CR1}
\begin{cases}
	\partial_u \alpha(u,v)- \partial_v \beta(u,v)=0\\
	\partial_v \alpha(u,v)+ \partial_u \beta(u,v)=0.
\end{cases}
\end{equation}
\end{thm}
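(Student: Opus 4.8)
The plan is to build the pair $(\alpha,\beta)$ from the restriction of $f$ to a single slice and then to show that this pair is intrinsic, i.e.\ does not depend on the chosen slice; the representation for all $\underline{\omega}$ follows at once. Concretely, I would fix one imaginary unit $J\in\mathbb{S}^{m-1}$ and, using that $f|_{\mathbb{C}_J}$ is holomorphic, set
$$\alpha(u,v):=\tfrac12\bigl(f(u+Jv)+f(u-Jv)\bigr),\qquad \beta(u,v):=\tfrac12 J^{-1}\bigl(f(u+Jv)-f(u-Jv)\bigr),$$
so that $f(u+Jv)=\alpha(u,v)+J\beta(u,v)$ tautologically and the parity relations \eqref{cond} hold by construction ($\alpha$ is manifestly even and $\beta$ odd in $v$).

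Next I would verify the Cauchy--Riemann system \eqref{CR1}. Substituting $f=\alpha+J\beta$ into the slice equation $(\partial_u+J\partial_v)f=0$ on $\mathbb{C}_J$ and using $J^2=-1$ gives
$$\bigl(\partial_u\alpha-\partial_v\beta\bigr)+J\bigl(\partial_v\alpha+\partial_u\beta\bigr)=0.$$
Here the first summand is even in $v$ and the second is odd in $v$ (differentiation preserves or flips the parity of $\alpha,\beta$ appropriately, and $J$ is constant), so replacing $v$ by $-v$ and adding/subtracting forces each summand to vanish separately; since $J$ is invertible this yields exactly the two equations in \eqref{CR1}. This step is purely formal.

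The substantive point --- and the one where the hypothesis that $\Omega$ is an \emph{axially symmetric slice domain} is actually used --- is to prove that the same $\alpha,\beta$ represent $f$ on every slice, i.e.\ $f(u+\underline{\omega}v)=\alpha(u,v)+\underline{\omega}\beta(u,v)$ for all $\underline{\omega}\in\mathbb{S}^{m-1}$. I would argue locally first: near a real point $x_0\in\Omega\cap\mathbb{R}$ the slice-holomorphicity identifies the Taylor coefficients of $f|_{\mathbb{C}_{\underline{\omega}}}$ at $x_0$ with the real-axis derivatives $\partial_u^n f(x_0)$, which do not depend on $\underline{\omega}$; expanding $\bigl((u-x_0)+\underline{\omega}v\bigr)^n$ by the binomial theorem and splitting into even and odd powers of $\underline{\omega}$ produces real polynomials $A_n,B_n$ with $f(u+\underline{\omega}v)=\sum_n (A_n+\underline{\omega}B_n)\,\partial_u^nf(x_0)/n!$, exhibiting $\underline{\omega}$-independent components. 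This reproduces the previously defined $\alpha,\beta$ and establishes the representation in a neighbourhood of the real axis.

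To globalize, I would define $\tilde f(u+\underline{\omega}v):=\alpha(u,v)+\underline{\omega}\beta(u,v)$ on all of $\Omega$ --- which is legitimate precisely because the $SO(m)$-invariance guarantees $u+\underline{\omega}v\in\Omega$ whenever $u+Jv\in\Omega$ --- and note that $\tilde f$ is slice monogenic since $\alpha,\beta$ obey \eqref{CR1}. On each slice the domain $\mathbb{C}_{\underline{\omega}}\cap\Omega$ is connected and meets the real axis in a relatively open, hence accumulating, set on which $f=\tilde f$; the identity theorem for holomorphic functions, applied componentwise, then gives $f=\tilde f$ throughout $\mathbb{C}_{\underline{\omega}}\cap\Omega$, and letting $\underline{\omega}$ vary yields $f=\tilde f$ on $\Omega$. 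The main obstacle is exactly this slice-independence: the even/odd construction and the Cauchy--Riemann check are formal, whereas the passage from a single slice to all of $\Omega$ is where connectedness of the slices, the nonempty real intersection, and the rotational symmetry of the domain must all be invoked together.
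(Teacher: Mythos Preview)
The paper does not supply a proof of this statement: it is quoted as a known result from \cite{CSS3} and is used as background in the preliminaries section. So there is no ``paper's own proof'' against which to compare your argument.

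That said, your outline is the standard route to the representation theorem and is essentially correct. A small clarification on the last step: when you invoke the identity theorem ``componentwise'' on a slice $\mathbb{C}_{\underline{\omega}}$, the naive real components $f_A$ of $f=\sum_A f_A e_A$ do not individually satisfy scalar Cauchy--Riemann equations. What does work is to view $\mathbb{R}_m$ as a complex vector space via left multiplication by $\underline{\omega}$ (since $\underline{\omega}^2=-1$); in a $\mathbb{C}_{\underline{\omega}}$-basis of $\mathbb{R}_m$ the slice equation decouples into ordinary Cauchy--Riemann equations for each coordinate, and then the usual identity theorem applies. Alternatively, observing that slice-monogenic functions are real-analytic (each real component is harmonic) and that $f$ and $\tilde f$ agree on an open neighbourhood of the real axis already gives $f=\tilde f$ on the connected slice. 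Either way your globalisation step goes through.
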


From the previous result one can get how to induce slice monogenic functions from intrinsic holomorphic functions.

\begin{definition}
\label{holoint}
Let us consider a holomorphic function $f(z)= \alpha(u,v)+i \beta(u,v)$, with $z=u+iv$. This function is said to be \emph{intrinsic} if it is defined in a complex domain $\Omega_2$ and satisfies the following relation $f(z)^c=f(z^c)$, where $.^c$ is the complex conjugation. Equivalently, this condition means that the real and imaginary part of $f$ satisfy the condition \eqref{cond}, or analogously, that the restriction of the function $f$ to the real line is real-valued.
\\ We denote by $Hol(\Omega_2) $ the space of holomorphic complex functions on $ \Omega_2 $ and by $\mathcal{H}(\Omega_2) $ the real vector subspace of $ Hol(\Omega_2) $ consisting of complex intrinsic holomorphic functions, i.e.,
\begin{eqnarray*}
\mathcal{H}(\Omega_2)&=& \{f \in Hol(\Omega_2) \,: \, \alpha(u,v)=\alpha(u,-v), \quad \beta(u,v)= -\beta(u,-v)\}\\
&=&\{f \in Hol(\Omega_2)\, : \, f_{| \mathbb{R}} \, \hbox{is} \, \mathbb{R}-\hbox{valued}\}.
\end{eqnarray*} 
\end{definition}
The set of intrinsic holomorphic functions defined in $ \Omega_2 \subset \mathbb{C}$ induces the class of slice monogenic functions on $\Omega= \Omega_2 \times \mathbb{S}^{m-1}$ by means of the following extension map 
$$S_{\mathbb{C}}: \mathcal{H}(\Omega_2) \otimes \mathbb{R}_m \to \mathcal{SM}(\Omega), \qquad \alpha(u,v)+i \beta(u,v) \to \alpha(x_0, |\underline{x}|)+ \underline{\omega} \beta(x_0, |\underline{x}|),$$
which consists of replacing the complex variable $z=u+iv$ by the paravector variable $x=x_0+ \underline{x}$, and the complex unit $i$ is replaced by the uni vector $ \underline{\omega}= \frac{\underline{x}}{|\underline{x}|}$ in $ \mathbb{R}^m$.  
The slice monogenic function obtained from the extension maps is of the form $f(x_0+ \underline{x})=\alpha(x_0,|\underline{x}|)+ \underline{\omega} \beta(x_0, |\underline{x}|)$. This function is well defined at $\underline{x}=0$, since $ \beta(x_0,|\underline{x}|)$ is an odd function in the second variable. Indeed, by using the Taylor expansion of the function $ \beta$
$$ \beta(x_0,|\underline{x}|)= \sum_{j=0}^\infty \frac{|\underline{x}|^{2j+1}}{(2j+1)!} \partial_{|\underline{x}|}^{2j+1} [\beta](x_0,0),$$
this implies that
$$ \frac{\underline{x}}{|\underline{x}|} \beta(x_0,|\underline{x}|)\biggl |_{\underline{x}=0}=\sum_{j=0}^\infty \frac{\underline{x}|\underline{x}|^{2j}}{(2j+1)!}\biggl |_{\underline{x}=0} \partial_{|\underline{x}|}^{2j+1} [\beta](x_0,0)=0.$$

Since the intrinsic holomorphic functions are uniquely determined by their restrictions to the real line, the above extension can be realized in terms of real analytic functions on the real line. Let $f(u+iv)= \alpha(u,v)+ i \beta(u,v)$ be an intrinsic holomorphic function on $\Omega_2$. The Taylor expansion of the function $f$ around a point $(u,0)$ is
$$ f(u+iv)=\sum_{j=0}^\infty \frac{(iv)^j}{j!} f^{(j)}(u)= \sum_{j=0}^\infty \frac{(iv)^j}{j!} \partial_u^j[\alpha](u,0).$$
It is clear that the function $f$ is the unique holomorphic extension of the function $f_0(u)=\alpha(u,0)$. Furthermore, we observe that the real and imaginary parts of $f$ are given by
$$ \alpha(u,v)= \sum_{j=0}^\infty \frac{(-1)^j v^{2j}}{(2j)!} \partial_u^{2j}[\alpha](u,0), \qquad \hbox{and} \qquad \beta(u,v)=\sum_{j=0}^\infty  \frac{(-1)^j v^{2j+1}}{(2j+1)!} \partial_u^{2j+1}[\alpha](u,0).
$$
Now, let us consider the subset of the real line $\Omega_1=\Omega_2 \cap \mathbb{R}$, where $\Omega_2$ is an intrinsic complex domain. We denote by $ \mathcal{A}(\Omega_1)$ the space of real-valued analytic functions on $\Omega_1$ with unique holomorphic extension to $ \Omega_2$. We can define the holomorphic extension map $C=exp(iv \partial_u)$ i.e.
$$ C: \mathcal{A}(\Omega_1) \to \mathcal{H}(\Omega_2), \qquad f_0(u) \mapsto \sum_{j=0}^\infty \frac{(iv)^{j}}{j!} f_{0}^{(j)}(u).$$    
By means of the above map we can define the slice monogenic extension map as $S= S_{\mathbb{C}} \circ C= exp(\underline{x} \partial_{x_0})$, i.e. 
\begin{equation}
\label{26}
S: \mathcal{A}(\Omega_1) \otimes \mathbb{R}_m \to \mathcal{SM}(\Omega), \qquad f_0(x_0) \mapsto \sum_{j=0}^\infty \frac{\underline{x}^j}{j!} f_0^{(j)}(x_0).
\end{equation}
To sum up we have the following result.
\begin{thm}[Slice extension map]
\label{sliceex}
Under the assumptions stated above, the following $ \mathbb{R}_m$ modules are isomorphic,
$$ \mathcal{SM}(\Omega) \simeq \mathcal{A}(\Omega_1) \otimes \mathbb{R}_m \simeq \mathcal{H}(\Omega_2) \otimes \mathbb{R}_m.$$
In fact the following diagram  commutes
\[
\begin{tikzcd}[row sep = 2em, column sep = 5em]
\mathcal{A}(\Omega_1)\otimes \mathbb{R}_m \arrow[r, "C", ] \arrow[dr, "S", labels=below] & \mathcal{H}(\Omega_2) \otimes \mathbb{R}_m \arrow[d, "S_{\mathbb{C}}"] \\
& \mathcal{SM}(\Omega),
\end{tikzcd}
\]
where the map $S=S_{\mathbb{C}} \circ C= exp(\underline{x}\partial_{x_0}$), given by $S[f_0](x)=\sum_{j=0}^\infty \frac{\underline{x}^j}{j!} f_0^{(j)}(x_0)$ is inverted by the restriction operator to the real line, i.e. $S[f_0]|_{\underline{x}=0}=f_0.$
\end{thm}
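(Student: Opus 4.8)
The plan is to establish the two isomorphisms $\mathcal{A}(\Omega_1)\otimes\mathbb{R}_m \simeq \mathcal{H}(\Omega_2)\otimes\mathbb{R}_m$ and $\mathcal{H}(\Omega_2)\otimes\mathbb{R}_m \simeq \mathcal{SM}(\Omega)$ separately, to verify that $C$ and $S_{\mathbb{C}}$ are right $\mathbb{R}_m$-module homomorphisms, and then to compose them to obtain the factorization $S=S_{\mathbb{C}}\circ C$. Since each map acts only on the function part and commutes with right multiplication by Clifford constants, $\mathbb{R}_m$-linearity is immediate and can be checked componentwise over the basis $\{e_A\}$; for instance $S_{\mathbb{C}}[(\alpha+i\beta)a]=(\alpha+\underline{\omega}\beta)a=S_{\mathbb{C}}[\alpha+i\beta]\,a$ by associativity of the Clifford product.

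For the first isomorphism I would invoke the classical fact that a real-analytic $f_0$ on $\Omega_1$ admits a unique holomorphic extension to $\Omega_2$, namely its Taylor series $C[f_0]=\sum_{j\ge 0}\frac{(iv)^j}{j!}f_0^{(j)}(u)$; the even/odd parity of the real and imaginary parts displayed before the statement shows that $C[f_0]$ is intrinsic, so $C$ lands in $\mathcal{H}(\Omega_2)\otimes\mathbb{R}_m$. The inverse is restriction to the real line, which returns $f_0$ because every term with $j\ge 1$ vanishes at $v=0$. Convergence is inherited termwise from the complex case, since $|\underline{x}^{j}|=|\underline{x}|^{j}$ shows the radius of convergence is unchanged when passing to Clifford-valued coefficients.

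The heart of the argument is the second isomorphism. Well-definedness of $S_{\mathbb{C}}[\alpha+i\beta]=\alpha(x_0,|\underline{x}|)+\underline{\omega}\,\beta(x_0,|\underline{x}|)$, including at $\underline{x}=0$, is already granted by the oddness of $\beta$ established in the excerpt; that the image is slice monogenic follows from the direct computation $(\partial_u+\underline{\omega}\partial_v)(\alpha+\underline{\omega}\beta)=(\partial_u\alpha-\partial_v\beta)+\underline{\omega}(\partial_v\alpha+\partial_u\beta)$, using $\underline{\omega}^2=-1$, which vanishes precisely by the Cauchy--Riemann system \eqref{CR1}. Surjectivity is exactly the content of the Representation Theorem \ref{rapp}: every $f\in\mathcal{SM}(\Omega)$ has the axial form $\alpha+\underline{\omega}\beta$ with $\alpha,\beta$ satisfying \eqref{cond} and \eqref{CR1}, hence equals $S_{\mathbb{C}}[\alpha+i\beta]$. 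Injectivity follows from uniqueness of this $(\alpha,\beta)$ pair: restricting a function in the kernel to a single plane $\mathbb{C}_{\underline{\omega}}$ and separating its scalar and $\underline{\omega}$-parts forces $\alpha=\beta=0$.

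Finally, I would compose the two maps. Substituting $u\mapsto x_0$, $v\mapsto|\underline{x}|$, and $i\mapsto\underline{\omega}=\underline{x}/|\underline{x}|$ into $C[f_0]$ and using $(\underline{\omega}|\underline{x}|)^{j}=\underline{x}^{j}$ yields $S_{\mathbb{C}}(C[f_0])=\sum_{j\ge 0}\frac{\underline{x}^{j}}{j!}f_0^{(j)}(x_0)$, the claimed closed form $\exp(\underline{x}\partial_{x_0})$ for $S$. Evaluating at $\underline{x}=0$ annihilates every term with $j\ge 1$ and recovers $f_0$, so restriction to the real line is a two-sided inverse of $S$; this identifies the inverse explicitly and, together with the two isomorphisms above, confirms that $S$ is itself an isomorphism of right $\mathbb{R}_m$-modules. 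The only genuinely delicate point is the uniform convergence of these series on all of $\Omega$, but this reduces to the complex-variable estimate noted above, since the Clifford norm of $\underline{x}^{j}$ is $|\underline{x}|^{j}$.
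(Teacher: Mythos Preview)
Your proposal is correct and follows essentially the same route as the paper. In fact, the paper does not give a separate formal proof of this theorem: it is introduced with ``To sum up we have the following result'' and merely packages the discussion that precedes it (the definition of $S_{\mathbb{C}}$, the well-definedness at $\underline{x}=0$ via the oddness of $\beta$, the holomorphic extension map $C$, and the composition $S=S_{\mathbb{C}}\circ C$). Your write-up is a more explicit and carefully organised version of exactly that discussion, and your appeal to the Representation Theorem~\ref{rapp} for surjectivity of $S_{\mathbb{C}}$ is the right ingredient, implicit in the paper's exposition.
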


Another possibility to extend the holomorphic function theory of one compelex variable to higher dimensions is given by the theory of monogenic functions. Standard references on this setting are \cite{BDS, green}. 

\begin{definition}
Let $ \Omega$ be an open set in $ \mathbb{R}^{m+1}$. A function $f: \Omega \subset \mathbb{R}^{m+1} \to \mathbb{R}_m$, is called (left) monogenic if it is in the kernel of the generalized Cauchy-Riemann operator in $ \mathbb{R}^{m+1}$ i.e. 
$$ \mathcal{D}f=( \partial_{x_0}+ \partial_{\underline{x}})f(x)=0,$$
where $ \partial_{\underline{x}}=\sum_{j=1}^m e_j \partial_{x_j}$ is the well-known Dirac operator in $ \mathbb{R}^m$.
\end{definition}

\begin{rem}
We observe that if $m=3$ the operator $\mathcal{D}$ is called Fueter operator.
\end{rem}

We observe that for $m=1$ , the monogenic functions on $ \mathbb{R}^2$ are holomorphic functions of the variable $x_0+e_1x_1$. The theory of monogenic functions can be considered a refinement of the harmonic analysis, since the Cauchy-Riemann operator $ \mathcal{D}$ factorizes the Laplace operator in $ \mathbb{R}^{m+1}$, i.e.
$$ \Delta_{\mathbb{R}^{m+1}}:=\partial_{x_0}^2+ \Delta_{\underline{x}}= \mathcal{D} \overline{\mathcal{D}}= \overline{\mathcal{D}} \mathcal{D},$$
where $ \Delta_{\underline{x}}:= \sum_{j=1}^m \partial_{x_j}^2$ and $\overline{\mathcal{D}}:= \partial_{x_0}-\partial_{\underline{x}}$.
The action of the of the Laplace operator in $ \mathbb{R}^m$ on integer powers of $ \underline{x} \in \mathbb{R}^m$ will be fundamental in the next section.
\begin{lemma}
Let $j  \geq 2$. Then for $\underline{x} \in \mathbb{R}^{m}$ we have that
\begin{equation}
	\label{actla}
	\Delta_{\underline{x}} (\underline{x}^{j})= c(m,j) \underline{x}^{j-2}, \quad \hbox{where} \quad c(m,j)= \begin{cases}
		-4p \left( \frac{m}{2}+p-1\right)& \quad \hbox{if} \, j=2p, \quad p \in \mathbb{N} \\
		-4p \left( \frac{m}{2}+p\right)& \quad \hbox{if} \, j=2p+1, \quad p \in \mathbb{N}.
	\end{cases}
\end{equation}
\end{lemma}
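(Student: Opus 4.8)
The plan is to reduce the action of $\Delta_{\underline{x}}$ on $\underline{x}^j$ to elementary computations with radial functions, exploiting the identity $\underline{x}^2 = -|\underline{x}|^2 = -r^2$ with $r := |\underline{x}|$. Since $\underline{x}^2$ is scalar and commutes with everything, the even and odd powers take the closed forms $\underline{x}^{2p} = (-1)^p r^{2p}$ and $\underline{x}^{2p+1} = (-1)^p r^{2p}\,\underline{x}$. I would therefore treat the two parities separately, compute each Laplacian in terms of $r$, and at the end translate the resulting powers of $r$ back into powers of $\underline{x}$ via the same identity, noting in particular that $r^{2p-2} = (-1)^{p-1}\underline{x}^{2p-2} = -(-1)^p \underline{x}^{2p-2}$.

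For the even case $j = 2p$, I would apply the standard expression for the Laplacian of a radial function in $\mathbb{R}^m$, $\Delta_{\underline{x}} g(r) = g''(r) + \frac{m-1}{r}\,g'(r)$ (equivalently, a direct termwise differentiation of $r^{2p}$). With $g(r) = r^{2p}$ this yields $\Delta_{\underline{x}} r^{2p} = 2p(2p + m - 2)\,r^{2p-2} = 4p\big(\tfrac{m}{2} + p - 1\big)\,r^{2p-2}$. Multiplying by $(-1)^p$ and substituting $r^{2p-2} = -(-1)^p \underline{x}^{2p-2}$ then produces exactly the claimed coefficient $c(m, 2p) = -4p\big(\tfrac{m}{2}+p-1\big)$.

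For the odd case $j = 2p+1$, the leftover factor $\underline{x}$ makes the expression vector-valued, so instead I would use the Leibniz rule $\Delta_{\underline{x}}(fg) = (\Delta_{\underline{x}} f)\,g + 2\langle \nabla f, \nabla g \rangle + f\,\Delta_{\underline{x}} g$ with $f = r^{2p}$ and $g$ equal to each coordinate $x_k$. Here $\Delta_{\underline{x}} x_k = 0$ and $\nabla r^{2p} = 2p\, r^{2p-2}\,\underline{x}$, so the only surviving contributions are $(\Delta_{\underline{x}} r^{2p})\,x_k$ and the cross term $2\cdot 2p\, r^{2p-2} x_k$; summing against $e_k$ gives $\Delta_{\underline{x}}(r^{2p}\,\underline{x}) = 4p\big(\tfrac{m}{2}+p\big)\,r^{2p-2}\,\underline{x}$. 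Converting back as before produces $c(m, 2p+1) = -4p\big(\tfrac{m}{2}+p\big)$.

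The computation is essentially routine; the two points requiring care are the sign bookkeeping when passing between powers of $r$ and powers of $\underline{x}$ (the $(-1)^p$ attached to $\underline{x}^j$ and the $(-1)^{p-1}$ attached to $\underline{x}^{j-2}$ must combine to the overall minus sign in $c(m,j)$), and the appearance of the cross term $2\langle \nabla f, \nabla g\rangle$ in the odd case, which is precisely what shifts the coefficient from $\tfrac{m}{2}+p-1$ to $\tfrac{m}{2}+p$ and thereby accounts for the difference between the two branches of $c(m,j)$.
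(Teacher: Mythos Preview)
Your proof is correct. The approach is close in spirit to the paper's but more self-contained: the paper invokes the known identity
\[
\Delta_{\underline{x}}\bigl(|\underline{x}|^{2\ell} H_k(\underline{x})\bigr)=4\ell\Bigl(k+\tfrac{m}{2}+\ell-1\Bigr)|\underline{x}|^{2(\ell-1)} H_k(\underline{x})
\]
for spherical harmonics $H_k$ (taking $H_0=1$ for the even case and $H_1=\underline{x}$ for the odd case), whereas you derive exactly these two special cases directly via the radial Laplacian formula and the Leibniz rule. Your route avoids the external reference at the cost of a short extra computation; the paper's route is terser but relies on a quoted formula whose proof is essentially the same Leibniz-type argument you carry out. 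Both hinge on the same even/odd splitting and the observation that $\underline{x}$ is harmonic, so the difference is one of packaging rather than of idea.
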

\begin{proof}
For $\ell \geq 1, k \geq 0$, it is crucial to use the following formula, see \cite{green}: 
\begin{equation}
\label{lap}
\Delta_{\underline{x}}(| \underline{x}|^{2\ell}H_k(\underline{x}))=4\ell \left(k+\frac{m}{2}+\ell-1\right)|\underline{x}|^{2(\ell-1)}H_k(\underline{x}), \qquad  \underline{x} \in \mathbb{R}^m,
\end{equation}
where $H_k(\underline{x})$ are spherical harmonics of degree $k$. Now, we suppose that $j$ is even. Thus if $j=2p$, with $p \in \mathbb{N}$, by formula \eqref{lap} (with $k=0$), we have
$$ \Delta_{\underline{x}}(\underline{x}^{2p})=(-1)^p \Delta_{\underline{x}}(| \underline{x}|^{2p})=(-1)^p4p \left(\frac{m}{2}+p-1\right) | \underline{x}|^{2(p-1)}=-4p \left(\frac{m}{2}+p-1\right) \underline{x}^{2p-2}.$$
Now, we suppose that $j$ is odd. So we consider $j=2p+1$, with $p \in \mathbb{N}$, so by \eqref{lap} (with $k=1$) we obtain
$$ \Delta_{\underline{x}}(\underline{x}^{2p+1})=(-1)^p \Delta_{\underline{x}}(| \underline{x}|^{2p} \underline{x})=(-1)^p 4p \left(\frac{m}{2}+p\right)| \underline{x}|^{2(p-1)} \underline{x}=- 4p \left(\frac{m}{2}+p\right) \underline{x}^{2p-1}.$$
This proves the result.
\end{proof}

An interesting subset of monogenic functions is given by the following class of functions.

\begin{definition}[Axially monogenic function]
\label{axmono}
Let $\Omega \subset \mathbb{R}^{m+1}$ be an axially symmetric slice domain and let $\Omega_2 \subset \mathbb{C}$ be an intrinsic complex domain such that $\Omega=\Omega_2 \times \mathbb{S}^{m-1}$. A function $f \in \mathcal{A}(\Omega) \otimes \mathbb{R}_m$ is said to be  axially monogenic if it is in the kernel of the generalized Cauchy-Riemann operator in $ \mathbb{R}^{m+1}$, i.e. $ \mathcal{D}f=0$, and it is of the form 
\begin{equation}
\label{form}
f(x_0+ \underline{x})=\alpha(x_0, |\underline{x}|)+ \underline{\omega} \beta(x_0,|\underline{x}|), \qquad \underline{\omega}= \frac{\underline{x}}{|\underline{x}|},
\end{equation}
where $\alpha$, $\beta \in \mathcal{A}(\Omega_2) \otimes \mathbb{R}_m$ satisfy the conditions \eqref{cond}. We denote by $ \mathcal{AM}(\Omega)$ the set of left axially monogenic functions on $ \Omega$.
\end{definition}

The axially monogenic monogeinc functions satisfy the so called Vekua system, see \cite{green}.

\begin{thm}
Let $m \in \mathbb{N}$. Let $f$ be an axially monogenic function as in Definition \ref{axmono}. Then the functions $\alpha(x_0,r)$, $\beta(x_0,r)$ satisfy the Vekua system:
\begin{equation}
\label{vek}
\begin{cases}
\partial_{x_0}\alpha(x_0,r)-\partial_r \beta(x_0,r)= \frac{m-1}{r}\beta(x_0,r)\\
\partial_{x_0}\beta(x_0,r)+\partial_r \alpha(x_0,r)=0,
\end{cases}
\end{equation}
with $r=| \underline{x}|$.
\end{thm}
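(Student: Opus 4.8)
The plan is to verify the system by a direct application of the operator $\mathcal{D}=\partial_{x_0}+\partial_{\underline{x}}$ to the axial function $f=\alpha(x_0,r)+\underline{\omega}\beta(x_0,r)$, with $r=|\underline{x}|$, and then to separate the resulting expression according to its dependence on $\underline{\omega}$. The only analytic input needed is the chain rule together with the radial identities $\partial_{x_j}r=x_j/r$ and $\partial_{x_j}(x_k/r)=\delta_{jk}/r-x_jx_k/r^3$, and the algebraic identity $\underline{x}^2=-r^2$ coming from the Clifford relations. Throughout one must keep in mind that $\alpha$ and $\beta$ are Clifford-valued and sit on the right, so that every factor produced by differentiating the geometric part $\underline{\omega}=\underline{x}/r$ accumulates to the left of $\beta$.

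First I would record the two easy pieces. The $x_0$-derivative is immediate, $\partial_{x_0}f=\partial_{x_0}\alpha+\underline{\omega}\,\partial_{x_0}\beta$. For the scalar coefficient $\alpha$ the chain rule gives
\[
\partial_{\underline{x}}\alpha=\sum_{j=1}^m e_j\frac{x_j}{r}\,\partial_r\alpha=\underline{\omega}\,\partial_r\alpha .
\]
The crucial and only delicate computation is $\partial_{\underline{x}}(\underline{\omega}\beta)=\partial_{\underline{x}}\!\left(\tfrac{\underline{x}}{r}\beta\right)$. Expanding with the product and chain rules yields three sums; using $\sum_j e_je_j=-m$ and $\sum_{j,k}e_je_k\,x_jx_k=\underline{x}^2=-r^2$, these collapse respectively to $-\tfrac{m}{r}\beta$, $+\tfrac{1}{r}\beta$ and $-\partial_r\beta$, so that
\[
\partial_{\underline{x}}(\underline{\omega}\beta)=-\partial_r\beta-\frac{m-1}{r}\beta .
\]
This is exactly the step that produces the term $\tfrac{m-1}{r}\beta$ distinguishing the Vekua system from the flat Cauchy--Riemann system, and I expect it to be the \emph{main obstacle}: in particular the bookkeeping of the Clifford factors and the cancellation that leaves precisely the coefficient $m-1$.

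Collecting the three contributions, one obtains $\mathcal{D}f=A+\underline{\omega}B$ with
\[
A=\partial_{x_0}\alpha-\partial_r\beta-\frac{m-1}{r}\beta,\qquad B=\partial_{x_0}\beta+\partial_r\alpha,
\]
where neither $A$ nor $B$ depends on $\underline{\omega}$. Since $f$ is monogenic, $A+\underline{\omega}B=0$ for every $\underline{\omega}\in\mathbb{S}^{m-1}$; evaluating this relation at $\underline{\omega}$ and at $-\underline{\omega}$ (which share the same value of $r$, hence the same $A,B$) and then adding, respectively subtracting, forces $A=0$ and $\underline{\omega}B=0$, whence $B=0$ because $\underline{\omega}$ is invertible. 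These two identities $A=0$ and $B=0$ are precisely the two equations of the Vekua system \eqref{vek}, which completes the argument.
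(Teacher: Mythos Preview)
Your proof is correct. The paper does not supply its own proof of this theorem; it is stated in the preliminaries with a reference to \cite{green}, and the identity $\mathcal{D}f=\big(\partial_{x_0}\alpha-\partial_r\beta-\tfrac{m-1}{r}\beta\big)+\underline{\omega}\big(\partial_{x_0}\beta+\partial_r\alpha\big)$ that you derive is exactly formula \eqref{Dirac} of Lemma~\ref{Newt}, likewise quoted from the literature. So your direct computation together with the $\pm\underline{\omega}$ separation argument is precisely the standard route the paper relies on implicitly.
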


\begin{rem}

	From the fact that the functions $A$ and $B$ satisfy the condition \eqref{cond} we have that $f(x_0+ \underline{x})$ is well-defined at $ \underline{x}=0$. By using the Taylor expansion of $A$ and $B$ around points in the real line 
	$$ \alpha(x_0, |\underline{x}|)= \sum_{j=0}^\infty |\underline{x}|^{2j} (-1)^j f_{2j}(x_0) \qquad \hbox{and} \qquad \beta(x_0, |\underline{x}|)= \sum_{j=0}^\infty |\underline{x}|^{2j+1} (-1)^j f_{2j+1}(x_0).$$
	We can write the function $f$ in power series in terms of $ \underline{x}$, i.e.
	$$ f(x)= \sum_{j=0}^\infty \underline{x}^{j} f_{j}(x_0).$$
\end{rem}
The action of the generalized Cauchy-Riemann operator, its conjugate, and the Laplace operator on a function of axial form is described by the following result; see \cite{D, green, GHS}.
\begin{lemma}
\label{Newt}
Let $m \in \mathbb{N}$. Let be a function  $f$ of axial type, i.e. of the form 
$$ f(x_0+ \underline{x})=\alpha(x_0, r)+\underline{\omega}\beta(x_0, r), \qquad \underline{\omega}= \frac{\underline{x}}{| \underline{x}|}, \quad r=| \underline{x}|.$$
Then the application of the operator $ \mathcal{D}$ on the function $f$ can be written as
\begin{equation}
\mathcal{D}f(x_0+ \underline{x})=\left( \partial_{x_0}\alpha(x_0,r)- \partial_r\beta(x_0,r)- \frac{m-1}{r} \beta(x_0,r)\right)+ \underline{\omega} (\partial_{x_0}\beta(x_0,r)+ \partial_{r}\alpha(x_0,r))\label{Dirac}
\end{equation}
and the application of the operator $\overline{\mathcal{D}}$ on the function $f$ is given by
$$ 
\mathcal{\overline{D}}f(x_0+ \underline{x})=\left( \partial_{x_0}\alpha(x_0,r)+ \partial_r\beta(x_0,r)+ \frac{m-1}{r} \beta(x_0,r)\right)+ \underline{\omega} (\partial_{x_0}\beta(x_0,r)- \partial_{r}\alpha(x_0,r)). 
$$
Moreover the application of the Laplace operator can be written as
\begin{eqnarray}
\label{lapp}
\Delta f(x_0+\underline{x})&=& \partial_{x_0}^2\alpha(x_0,r)+\partial_{x_0}^2\beta(x_0,r)+\partial_{r}^2\alpha(x_0,r)+ \underline{\omega}\partial_{r}^2\beta(x_0,r)\\
\nonumber
&&+ \frac{2}{r} \partial_r \alpha(x_0,r)+ (m-1) \underline{\omega} \partial_r \left( \frac{\beta(x_0,r)}{r}\right).
\end{eqnarray}
\end{lemma}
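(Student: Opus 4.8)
The plan is to reduce everything to two elementary computations: the action of the Dirac operator $\partial_{\underline{x}}$ on a scalar radial function $\alpha(x_0,r)$ and on a $1$-vector function $\underline{\omega}\beta(x_0,r)=\frac{\underline{x}}{r}\beta(x_0,r)$. Since $\mathcal{D}=\partial_{x_0}+\partial_{\underline{x}}$ and $\overline{\mathcal{D}}=\partial_{x_0}-\partial_{\underline{x}}$ act on $f=\alpha+\underline{\omega}\beta$ by linearity, and $\partial_{x_0}$ commutes with the ($x_0$-independent) factor $\underline{\omega}$, the whole statement follows once these two building blocks are understood.

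First I would compute $\partial_{\underline{x}}\alpha$. Using $\partial_{x_j}r=x_j/r$ and the chain rule, $\partial_{\underline{x}}\alpha=\sum_j e_j\frac{x_j}{r}\partial_r\alpha=\frac{\underline{x}}{r}\partial_r\alpha=\underline{\omega}\,\partial_r\alpha$, a pure $1$-vector. Next, for the vector part, I would establish the auxiliary identity $\partial_{\underline{x}}(\underline{x}\,g)=-m\,g-r\,\partial_r g$ valid for any scalar $g=g(x_0,r)$; this uses the Leibniz rule together with the Clifford relations $e_je_\ell+e_\ell e_j=-2\delta_{j\ell}$, which give $\sum_j e_j^2=-m$ and $\sum_{j,k}e_je_k\,x_jx_k=\underline{x}^2=-r^2$. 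Substituting $g=\beta/r$ and collecting terms then yields $\partial_{\underline{x}}(\underline{\omega}\beta)=-\partial_r\beta-\frac{m-1}{r}\beta$, a pure scalar. Assembling the scalar and $1$-vector parts gives \eqref{Dirac} for $\mathcal{D}$, and flipping the sign of $\partial_{\underline{x}}$ gives the corresponding formula for $\overline{\mathcal{D}}$.

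For the Laplacian I would exploit the factorization $\Delta_{\mathbb{R}^{m+1}}=\overline{\mathcal{D}}\,\mathcal{D}$ recalled earlier. The formula just proved shows that $\mathcal{D}f=\hat\alpha+\underline{\omega}\hat\beta$ is again of axial type, with $\hat\alpha=\partial_{x_0}\alpha-\partial_r\beta-\frac{m-1}{r}\beta$ and $\hat\beta=\partial_{x_0}\beta+\partial_r\alpha$; hence I may apply the $\overline{\mathcal{D}}$-formula a second time to $\mathcal{D}f$. After using the equality of mixed partials $\partial_{x_0}\partial_r=\partial_r\partial_{x_0}$, the cross terms cancel, leaving the scalar part $\partial_{x_0}^2\alpha+\partial_r^2\alpha+\frac{m-1}{r}\partial_r\alpha$ and the $1$-vector part $\underline{\omega}\big(\partial_{x_0}^2\beta+\partial_r^2\beta+(m-1)\partial_r(\beta/r)\big)$, which is the content of \eqref{lapp}. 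As an independent check one can instead compute $\Delta_{\underline{x}}$ directly on the two building blocks, via the radial Laplacian $\Delta_{\underline{x}}=\partial_r^2+\frac{m-1}{r}\partial_r$ on $\alpha$ and the analogous identity $\Delta_{\underline{x}}(\underline{x}\,g)=\underline{x}\big(\partial_r^2 g+\frac{m+1}{r}\partial_r g\big)$ on the vector part.

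The only delicate point is the bookkeeping in the $1$-vector computation: the non-commutativity of the $e_j$ must be handled through the defining relations rather than by treating them as ordinary coordinates, and the substitution $g=\beta/r$ produces several $1/r$ and $1/r^2$ terms that must be collected carefully to recover the exact coefficient $m-1$. All computations are carried out for $r=|\underline{x}|>0$; that the resulting expressions extend smoothly across $\underline{x}=0$ is ensured by the parity conditions \eqref{cond} on $\alpha$ and $\beta$.
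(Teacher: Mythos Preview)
The paper does not give its own proof of this lemma: it is stated as a known result with citations to \cite{D, green, GHS}. Your approach --- computing $\partial_{\underline{x}}$ on the scalar and $1$-vector building blocks via the Clifford relations, then assembling $\mathcal{D}$, $\overline{\mathcal{D}}$ and $\Delta=\overline{\mathcal{D}}\mathcal{D}$ --- is the standard one and is correct.

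One point worth flagging: the computation you outline produces the scalar part $\partial_{x_0}^2\alpha+\partial_r^2\alpha+\dfrac{m-1}{r}\partial_r\alpha$ and the $1$-vector part $\underline{\omega}\big(\partial_{x_0}^2\beta+\partial_r^2\beta+(m-1)\partial_r(\beta/r)\big)$. This is the correct formula (it is exactly the radial Laplacian in $\mathbb{R}^m$ acting on $\alpha$, plus the corresponding expression on $\underline{\omega}\beta$), but it does not literally match \eqref{lapp} as printed: there the coefficient $\tfrac{2}{r}$ should read $\tfrac{m-1}{r}$, and the term $\partial_{x_0}^2\beta$ is missing its factor $\underline{\omega}$. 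So when you write ``which is the content of \eqref{lapp}'' you are glossing over two evident typos in the displayed formula; your derivation is right, and in fact it is your version of the Laplacian that the paper actually uses later (e.g.\ in verifying \eqref{h11}).
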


As it happens for the slice monogenic functions, there exist an isomorphism between the module of real-analytic Clifford-valued functions on the real line and the module of axially monogenic functions in $ \mathbb{R}^{m+1}$. This is provided by a particular case of the generalized CK-extension, see \cite{green}.

\begin{thm}
\label{GCK1}
Let $f_{0}(x_0)$ be a Clifford-valued analytic function in a real domain $ \Omega_1 \subset \mathbb{R}$. Then, there exists a unique sequence $ \{f_j( x_0)\}_{j=1}^\infty$ of analytic functions on $\Omega_1$ such that the series
$$ f(x_0,\underline{x})= \sum_{j=0}^\infty \underline{x}^j f_{j}(x_0),$$
is convergent in an axially symmetric slice $(m+1)$-dimensional neighbourhood $ \Omega \subset \mathbb{R}^{m+1}$ of $ \Omega_1$ and its sum is a monogenic function i.e., $(\partial_{x_0}+\partial_{\underline{x}})f(x_0, \underline{x})=0$.
\\Furthermore, the sum $f$ is formally given by the expression
\begin{equation}
\label{Exx}
f(x_0, \underline{x})= \Gamma \left(\frac{m}{2}\right) \left( \frac{|\underline{x}|\partial_{x_0}}{2} \right)^{- \frac{m}{2}} \left(\frac{|\underline{x}|\partial_{x_0}}{2} J_{\frac{m}{2}-1}\left( |\underline{x}|\partial_{x_0} \right)+ \frac{\underline{x} \partial_{x_0}}{2} J_{\frac{m}{2}}\left( |\underline{x}|\partial_{x_0} \right) \right) f_{0}(x_0),
\end{equation}
where $J_{\nu}$ is the Bessel function of the first kind of order $\nu$. Formula \eqref{Exx} is known as the generalized CK-extension of $f_0$, and it is denoted by $GCK[f_0](x_{0}, \underline{x})$.
\end{thm}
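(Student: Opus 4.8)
The plan is to determine the coefficients $f_j$ directly from the monogenicity requirement $\mathcal{D}f=0$ and then resum the resulting series into closed form. Substituting the ansatz $f(x_0,\underline{x})=\sum_{j=0}^\infty \underline{x}^j f_j(x_0)$ into $\mathcal{D}f=(\partial_{x_0}+\partial_{\underline{x}})f=0$, and using that each $f_j$ depends only on $x_0$ (so that $\partial_{\underline{x}}(\underline{x}^j f_j)=(\partial_{\underline{x}}\underline{x}^j)f_j$), I would reduce the problem to identifying the action of the Dirac operator on the monomials $\underline{x}^j$ and then matching coefficients of equal powers of $\underline{x}$.

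First I would record the two elementary identities $\partial_{\underline{x}}\underline{x}^{2p}=-2p\,\underline{x}^{2p-1}$ and $\partial_{\underline{x}}\underline{x}^{2p+1}=-(2p+m)\underline{x}^{2p}$, which follow from $\underline{x}^2=-|\underline{x}|^2$ together with $\partial_{\underline{x}}|\underline{x}|^{2\ell}=2\ell|\underline{x}|^{2\ell-2}\underline{x}$ and $\partial_{\underline{x}}\underline{x}=-m$. Collecting the coefficient of $\underline{x}^k$ in $\mathcal{D}f=0$ then yields the two-term recursion $f_{k+1}=\frac{1}{k+m}f_k'$ for $k$ even and $f_{k+1}=\frac{1}{k+1}f_k'$ for $k$ odd. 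This recursion determines the whole sequence $\{f_j\}$ uniquely from $f_0$, which already establishes the existence and uniqueness claim for the coefficient functions. Solving it by separating the even and odd denominators gives the closed forms
\begin{equation*}
f_{2p}=\frac{f_0^{(2p)}}{4^p\,p!\,\left(\frac{m}{2}\right)_p}, \qquad f_{2p+1}=\frac{f_0^{(2p+1)}}{2^{2p+1}\,p!\,\left(\frac{m}{2}\right)_{p+1}},
\end{equation*}
where $(a)_p$ denotes the Pochhammer symbol.

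Next I would resum. Writing $\underline{x}^{2p}=(-1)^p|\underline{x}|^{2p}$ and $\underline{x}^{2p+1}=(-1)^p|\underline{x}|^{2p}\underline{x}$, the even part $\sum_p \underline{x}^{2p}f_{2p}$ becomes $\sum_p \frac{(-1)^p}{p!(m/2)_p}\big(\tfrac{|\underline{x}|\partial_{x_0}}{2}\big)^{2p}f_0$, while the odd part $\sum_p \underline{x}^{2p+1}f_{2p+1}$ becomes $\frac{\underline{x}\partial_{x_0}}{2}\sum_p\frac{(-1)^p}{p!(m/2)_{p+1}}\big(\tfrac{|\underline{x}|\partial_{x_0}}{2}\big)^{2p}f_0$. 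Comparing with the power series $J_\nu(z)=\sum_p \frac{(-1)^p}{p!\,\Gamma(p+\nu+1)}(z/2)^{2p+\nu}$ and using $\Gamma(p+\nu+1)=\Gamma(\nu+1)(\nu+1)_p$ with $\nu=\frac{m}{2}-1$ and $\nu=\frac{m}{2}$ respectively, the two parts are identified, up to the common factor $\Gamma(\tfrac{m}{2})$, with $(\tfrac{|\underline{x}|\partial_{x_0}}{2})^{-(m/2-1)}J_{m/2-1}(|\underline{x}|\partial_{x_0})$ and $\frac{\underline{x}\partial_{x_0}}{2}(\tfrac{|\underline{x}|\partial_{x_0}}{2})^{-m/2}J_{m/2}(|\underline{x}|\partial_{x_0})$; factoring out the common prefactor $(\tfrac{|\underline{x}|\partial_{x_0}}{2})^{-m/2}$ produces exactly \eqref{Exx}.

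Finally I would address convergence, which is the genuinely analytic step. Since $f_0$ is real-analytic on $\Omega_1$, on each compact subset there exist constants $C,R>0$ with $|f_0^{(j)}(x_0)|\le C\,j!\,R^{-j}$; combined with the factorial-type growth $4^p\,p!\,(m/2)_p$ in the denominators of $f_{2p}$ (and the analogous bound for $f_{2p+1}$), a direct majorant estimate shows that $\sum_j \underline{x}^j f_j(x_0)$ converges absolutely and locally uniformly on an axially symmetric slice neighbourhood $\Omega\subset\mathbb{R}^{m+1}$ of $\Omega_1$. This justifies term-by-term differentiation and makes the sum genuinely monogenic. I expect this convergence estimate --- controlling the interplay between the derivative bounds of the real-analytic datum and the Pochhammer denominators --- to be the \emph{main obstacle}, the algebraic recursion and the Bessel resummation being essentially bookkeeping.
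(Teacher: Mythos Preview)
Your proposal is correct. Note, however, that the paper does not actually prove Theorem~\ref{GCK1}: it is quoted as a known result from \cite{green}. That said, your argument is precisely the standard one, and it mirrors step by step the proof the paper \emph{does} give for the analogous harmonic CK-extension (Theorem~\ref{GCK}): there the authors apply $\Delta_{\mathbb{R}^{m+1}}$ to the ansatz, use the formula for $\Delta_{\underline{x}}(\underline{x}^j)$ to obtain a two-step recursion $A_{j+2}=-c(m,j+2)^{-1}\partial_{x_0}^2 A_j$, solve it in closed form, identify the resulting series with Bessel functions, and then establish convergence via the Cauchy estimates $|\partial_{x_0}^{2j}A_0|\le C_K(2j)!\lambda_K^{2j}$ and a ratio test. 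Your proof is the monogenic counterpart with $\mathcal{D}$ in place of $\Delta_{\mathbb{R}^{m+1}}$ and the one-step recursion coming from $\partial_{\underline{x}}\underline{x}^j$; the convergence step you flag as the ``main obstacle'' is handled exactly as in the paper's proof of Theorem~\ref{GCK}.
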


A connection between the modules of slice monogenic functions and axial monogenic functions is given by the well known Fueter-Sce-Qian's theorem. 
\\ This results was originally established by Fueter (see \cite{F}) in the quaternionic setting. Later M. Sce generalized it to the Euclidean space $ \mathbb{R}^{m+1}$ for odd values of the dimension $m$. Precisely, he proved that the differential operator 
$$\Delta^{\frac{m-1}{2}}_{\mathbb{R}^{m+1}}, \qquad (\hbox{where} \, m \in \mathbb{N} \, \hbox{is odd})$$
maps slice monogenic functions into axial monogenic functions, see \cite{S, CSS2}.
\\Later in 1997 T. Qian extended this result to any dimension $m \in \mathbb{N}$ by means of the Fourier multiplier.
% vedere paper F-functional per le date
\\ Now, we can summarize the Fueter-Sce's theorem as follows.
\begin{thm}[Fueter-Sce theorem]
Let $f(u+iv)= \alpha(u,v)+i \beta(u,v)$ be an intrinsic holomorphic function defined on an intrinsic complex domain $ \Omega_2 \subset \mathbb{C}$, and set $ \underline{\omega}= \frac{\underline{x}}{r}$ with $r=|\underline{x}|$. Then,
$$ \Delta^{\frac{m-1}{2}}_{\mathbb{R}^{m+1}}[f(x_0+ \underline{x})]= \Delta^{\frac{m-1}{2}}_{\mathbb{R}^{m+1}}[\alpha(x_0,r)+ \underline{\omega} \beta(x_0,r)],$$
is axially monogenic in the axially symmetric slice domain $\Omega=\Omega_2 \times \mathbb{S}^{m-1}=\{(x_0, \underline{x}) \in \mathbb{R}^{m+1}\,: \, (x_0, |\underline{x}|) \in \Omega_2\}.$
\end{thm}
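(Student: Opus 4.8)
The plan is to reduce the monogenicity of $\Delta_{\mathbb{R}^{m+1}}^{\frac{m-1}{2}}[f]$ to a single scalar identity, exploiting that $m$ is odd (so $\frac{m-1}{2}\in\mathbb{N}$ and $\Delta_{\mathbb{R}^{m+1}}^{\frac{m-1}{2}}$ is a genuine, constant-coefficient differential operator) and that all operators involved commute. Since axial monogenicity (Definition \ref{axmono}) means being of axial form \eqref{form} and in $\ker\mathcal D$, I would split the argument into: (i) controlling $\mathcal D f$, (ii) proving a scalar vanishing identity, and (iii) checking that the axial structure survives.

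First I would record the structure of $f$. Since $f(x_0+\underline x)=\alpha(x_0,r)+\underline\omega\,\beta(x_0,r)$ with $r=|\underline x|$ is the slice extension of an intrinsic holomorphic function, Theorem \ref{rapp} gives that $(\alpha,\beta)$ satisfy the Cauchy–Riemann system \eqref{CR1} in the variables $(x_0,r)$ and the parity relations \eqref{cond}; in particular $\beta$ is odd in $r$ and harmonic in $(x_0,r)$, i.e. $(\partial_{x_0}^2+\partial_r^2)\beta=0$. Applying Lemma \ref{Newt}, formula \eqref{Dirac}, and cancelling both brackets by \eqref{CR1}, one finds the monogenicity defect
$$\mathcal D f=-\frac{m-1}{r}\,\beta(x_0,r),$$
a purely scalar axial function. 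As $\Delta_{\mathbb{R}^{m+1}}^{\frac{m-1}{2}}$ commutes with $\mathcal D$, this yields
$$\mathcal D\,\Delta_{\mathbb{R}^{m+1}}^{\frac{m-1}{2}}[f]=\Delta_{\mathbb{R}^{m+1}}^{\frac{m-1}{2}}\,\mathcal D f=-(m-1)\,\Delta_{\mathbb{R}^{m+1}}^{\frac{m-1}{2}}\!\left(\frac{\beta}{r}\right),$$
so the entire theorem collapses to the scalar claim $\Delta_{\mathbb{R}^{m+1}}^{\frac{m-1}{2}}(\beta/r)=0$. On scalar functions of $(x_0,r)$ the operator $\Delta_{\mathbb{R}^{m+1}}$ reduces to $L:=\partial_{x_0}^2+\partial_r^2+\frac{m-1}{r}\partial_r$ (the scalar part of \eqref{lapp}), so the goal is $L^{\frac{m-1}{2}}(\beta/r)=0$.

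The hard part is this last identity, and I expect it to be the main obstacle. I would attack it through the Taylor expansion of the intrinsic holomorphic function, writing $\beta/r=\sum_{j\ge 0}a_j(x_0)\,r^{2j}$, and then proving by induction on $k$ the closed form
$$L^{k}\!\left(\frac{\beta}{r}\right)=\left(\prod_{i=1}^{k}(m-2i-1)\right)\sum_{j\ge 0}\left(\prod_{i=1}^{k}(2j+2i)\right)a_{j+k}(x_0)\,r^{2j},\qquad a_j(x_0):=\frac{(-1)^j}{(2j+1)!}\,\partial_{x_0}^{2j+1}\alpha(x_0,0).$$
The inductive step uses two ingredients: the action $L(r^{2j})=2j(2j+m-2)r^{2j-2}$ recorded in \eqref{actla}–\eqref{lap} (the $r$-lowering part) together with the identity $\partial_{x_0}^2 a_j=-(2j+2)(2j+3)\,a_{j+1}$ coming from the factorial structure of the coefficients (the $x_0$-part). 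Collecting the coefficient of $r^{2j}$ after each application of $L$ produces, after reindexing $j\mapsto j+1$, exactly the extra factor $(m-2k-1)$ at the $k$-th step. The payoff is that at $k=\frac{m-1}{2}$ the prefactor $\prod_{i=1}^{(m-1)/2}(m-2i-1)$ contains the term with $i=\frac{m-1}{2}$, namely $m-2\cdot\frac{m-1}{2}-1=0$, so the whole expression vanishes; this is precisely where the oddness of $m$ is essential, and one checks the first cases directly ($L(\beta/r)=0$ for $m=3$, $L^2(\beta/r)=0$ for $m=5$).

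Finally I would assemble the conclusion. Formula \eqref{lapp} shows that $\Delta_{\mathbb{R}^{m+1}}$ sends an axial function $A+\underline\omega B$ to another axial function and preserves the parity relations \eqref{cond}; iterating $\frac{m-1}{2}$ times, $\Delta_{\mathbb{R}^{m+1}}^{\frac{m-1}{2}}[f]$ is again of axial form \eqref{form} with components satisfying \eqref{cond}. Combined with $\mathcal D\,\Delta_{\mathbb{R}^{m+1}}^{\frac{m-1}{2}}[f]=0$ established above, this is exactly the definition of an axially monogenic function (Definition \ref{axmono}). Since every operation is local and leaves the axially symmetric slice domain $\Omega=\Omega_2\times\mathbb S^{m-1}$ invariant, the function $\Delta_{\mathbb{R}^{m+1}}^{\frac{m-1}{2}}[f]$ is axially monogenic on $\Omega$, as claimed.
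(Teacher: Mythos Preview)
The paper does not actually prove this theorem: it is stated as a classical result and attributed to Fueter \cite{F} and Sce \cite{S,CSS2}. Your argument is correct; in particular the reduction $\mathcal D\,\Delta^{\frac{m-1}{2}}f=-(m-1)\,\Delta^{\frac{m-1}{2}}(\beta/r)$ and the inductive computation of $L^{k}(\beta/r)$ both check out, with the vanishing at $k=\frac{m-1}{2}$ coming from the factor $m-2k-1$.

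It is worth noting, however, that the paper's own toolkit suggests a shorter route, essentially Sce's original one. Lemma \ref{R1} (with $j=\frac{m-1}{2}$) identifies $\Delta^{\frac{m-1}{2}}f$ up to a constant with $\alpha_{\frac{m-1}{2}}+\underline\omega\,\beta_{\frac{m-1}{2}}$, where $\alpha_j=(\tfrac{1}{r}\partial_r)^j\alpha$ and $\beta_j=(\partial_r\tfrac{1}{r})^j\beta$; then Lemma \ref{R2} at $j=\frac{m-1}{2}$ says exactly that this pair satisfies the Vekua system \eqref{vek}, hence $\mathcal D$ annihilates it by \eqref{Dirac}. That argument avoids the Taylor expansion and the combinatorial induction entirely, at the cost of invoking the recursive identities of Lemma \ref{R2}. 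Your approach has the advantage of being self-contained and of isolating precisely where the oddness of $m$ enters (the single zero factor in $\prod_{i=1}^{(m-1)/2}(m-2i-1)$), while the Sce-type argument packages the same cancellation into the passage from the Cauchy--Riemann system to the Vekua system.
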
 
The Fueter-Sce's theorem can be rewritten by using the fact that every intrinsic holomorphic function is the unique holomorphic extension of a real-analytic function on the real line.

\begin{thm}[Fueter-Sce theorem]
\label{FSQ2}
Let $ \Omega_1 \subset \mathbb{R}$ be a real domain and $f_0 \in \mathcal{A}(\Omega_1) \otimes \mathbb{R}_m$. Then $\Delta^{\frac{m-1}{2}}_{\mathbb{R}^{m+1}} \circ  S[f_0](x_0, \underline{x})$ is an axial monogenic function on a $(m+1)$-dimensional axially symmetric slice neighbourhood $\Omega \subset \mathbb{R}^{m+1}$ of $ \Omega_1$.
\end{thm}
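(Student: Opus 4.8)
The plan is to deduce this reformulation from the holomorphic statement of the Fueter--Sce theorem established just above, using the slice extension machinery of Theorem~\ref{sliceex}. The essential content of the reformulation is that the two lower-dimensional representations of a slice monogenic function---the restriction $f_0$ to the real axis and the intrinsic holomorphic function $f$ on $\Omega_2$---are interchangeable; so the only real work is to identify $S[f_0]$ with the slice extension $S_{\mathbb{C}}[f]$ of an intrinsic holomorphic function and then invoke the result already stated.

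First I would use that $f_0 \in \mathcal{A}(\Omega_1) \otimes \mathbb{R}_m$ is real-analytic to produce, componentwise, its unique holomorphic extension $f := C[f_0]$ to an intrinsic complex domain $\Omega_2$ with $\Omega_2 \cap \mathbb{R} = \Omega_1$. Since $C = \exp(iv\,\partial_u)$, writing $f = \alpha + i\beta$ one has that $\alpha$ is even and $\beta$ is odd in $v$; hence $f$ satisfies \eqref{cond} and is genuinely intrinsic. This step also fixes the ambient axially symmetric slice neighbourhood as $\Omega = \Omega_2 \times \mathbb{S}^{m-1}$, a neighbourhood of $\Omega_1$.

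Next I would invoke the commutativity of the diagram in Theorem~\ref{sliceex}, namely $S = S_{\mathbb{C}} \circ C$, to rewrite
$$ S[f_0](x_0,\underline{x}) = S_{\mathbb{C}}[f](x_0,\underline{x}) = \alpha(x_0,|\underline{x}|) + \underline{\omega}\,\beta(x_0,|\underline{x}|), \qquad \underline{\omega} = \frac{\underline{x}}{|\underline{x}|}. $$
This exhibits $S[f_0]$ as a slice monogenic function of axial type on $\Omega$. With this identification the claim reduces verbatim to the holomorphic formulation of the Fueter--Sce theorem stated above, applied to $f$: the function $\Delta^{\frac{m-1}{2}}_{\mathbb{R}^{m+1}}[\alpha(x_0,|\underline{x}|) + \underline{\omega}\,\beta(x_0,|\underline{x}|)]$ is axially monogenic on $\Omega = \Omega_2 \times \mathbb{S}^{m-1}$, and this is precisely $\Delta^{\frac{m-1}{2}}_{\mathbb{R}^{m+1}} \circ S[f_0]$.

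I expect the delicate points to be bookkeeping rather than hard analysis: checking that the domain of convergence of the power series \eqref{26} is exactly the slice neighbourhood $\Omega$ dictated by the radius of holomorphy of $f$, and that the Clifford-valuedness of $f_0$ does not disturb the intrinsic condition---both follow since $C$ and $S_{\mathbb{C}}$ act $\mathbb{R}_m$-linearly componentwise. A self-contained alternative bypassing the holomorphic version is also available: by Lemma~\ref{Newt} together with the Cauchy--Riemann system \eqref{CR1}, slice monogenicity of $S[f_0]$ forces $\mathcal{D}\,S[f_0] = -\tfrac{m-1}{r}\,\beta$, after which the factorization $\Delta^{\frac{m-1}{2}}_{\mathbb{R}^{m+1}} = \overline{\mathcal{D}}\,\Delta^{\frac{m-3}{2}}_{\mathbb{R}^{m+1}}\mathcal{D}$ from \eqref{introdiag2} and $\overline{\mathcal{D}}\mathcal{D} = \Delta_{\mathbb{R}^{m+1}}$ reduce the problem to verifying $\mathcal{D}\,\Delta^{\frac{m-1}{2}}_{\mathbb{R}^{m+1}}S[f_0] = 0$ by an axial-form computation; this route, however, leans on the heavier factorization machinery and is less economical than the reduction to the already-stated theorem.
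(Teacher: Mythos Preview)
Your proposal is correct and follows exactly the route the paper indicates: the paper does not give a formal proof of Theorem~\ref{FSQ2} but simply prefaces it with the remark that the Fueter--Sce theorem ``can be rewritten by using the fact that every intrinsic holomorphic function is the unique holomorphic extension of a real-analytic function on the real line,'' which is precisely the reduction via $S = S_{\mathbb{C}} \circ C$ that you spell out. Your write-up is in fact more detailed than the paper's own justification.
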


Hence by combining Theorem \ref{sliceex} and Theorem \ref{FSQ2} we have the following mapping property
$$ \Delta^{\frac{m-1}{2}}_{\mathbb{R}^{m+1}}: \mathcal{SM}(\Omega) \to \mathcal{AM}(\Omega).$$ 

We observe that generalized $CK$-extension does not coincide with the Fueter-Sce-Qian's map $\Delta^{\frac{m-1}{2}}_{\mathbb{R}^{m+1}} \circ S$ since the generalized $CK$-extension is an isomorphism between right modules while $\Delta^{\frac{m-1}{2}}_{\mathbb{R}^{m+1}} \circ S$ is not, see \cite{CSSOinverse, DKQS}. However, in \cite{DDG} has been proved a relation between these two extension tools.

\begin{thm}
\label{FG}
Let $ D \subset \mathbb{C}$ be an intrinsic complex domain. Consider a holomorphic function $f: D \to \mathbb{C}$ such that its restriction to the real line is real valued. Then for $m$ odd we have
\begin{equation}
\label{CGK5}
\Delta^{\frac{m-1}{2}}_{\mathbb{R}^{m+1}}f(x_0+ \underline{x})=  \gamma_m GCK[f^{(m-1)}(x_{0})],
\end{equation}
where $\gamma_m:= \frac{(-1)^{\frac{m-1}{2}}2^{m-1}}{(m-1)!} \left[\Gamma\left(\frac{m+1}{2}\right)\right]^2.$ Setting $\Omega_1:=D \cap \mathbb{R}$ we have the following commutative diagram
\begin{equation}\label{Diag1}
\begin{tikzcd}[row sep = 3em, column sep = 6em]
\mathcal{A}(\Omega_1) \otimes \mathbb{R}_n \arrow[r, "S", rightarrow] \arrow[d, "\gamma_m \partial_{x_0}^{m-1}", labels=left] & \mathcal{SM}(\Omega) \arrow[d, "\Delta^{\frac{m-1}{2}}_{\mathbb{R}^{m+1}}"] \\
\mathcal{A}(\Omega_1) \otimes \mathbb{R}_n \arrow[r, "\textup{GCK}", rightarrow]                  &\mathcal{AM}(\Omega)
\end{tikzcd}
\end{equation}
\end{thm}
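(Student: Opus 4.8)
The plan is to exploit the fact that, on an axially symmetric slice domain, an axially monogenic function is \emph{uniquely} determined by its restriction to the real axis — this is precisely the injectivity built into the isomorphism of Theorem~\ref{GCK1}, whose inverse is the restriction map $g \mapsto g|_{\underline{x}=0}$. Both sides of \eqref{CGK5} are axially monogenic: the right-hand side $\gamma_m\, GCK[f^{(m-1)}]$ by the very definition of the generalized CK-extension (Theorem~\ref{GCK1}), and the left-hand side $\Delta^{\frac{m-1}{2}}_{\mathbb{R}^{m+1}} S[f_0]$ by the Fueter-Sce theorem (Theorem~\ref{FSQ2}), where $f_0:=f|_{\mathbb{R}}$ is $\mathbb{R}$-valued since $f$ is intrinsic. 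Hence it suffices to verify that the two functions agree after restriction to $\underline{x}=0$. For the right-hand side this is immediate, because restriction inverts $GCK$, giving $\gamma_m\, GCK[f^{(m-1)}]\big|_{\underline{x}=0}=\gamma_m f_0^{(m-1)}(x_0)$. The whole theorem therefore reduces to the single scalar identity
\begin{equation*}
\Delta^{\frac{m-1}{2}}_{\mathbb{R}^{m+1}} S[f_0]\big|_{\underline{x}=0}=\gamma_m\, f_0^{(m-1)}(x_0).
\end{equation*}

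Next I would compute the left-hand side from the power-series form of the slice extension \eqref{26}, namely $S[f_0](x)=\sum_{j\ge 0}\frac{\underline{x}^j}{j!}f_0^{(j)}(x_0)$, which converges locally uniformly so that $\Delta^{\frac{m-1}{2}}_{\mathbb{R}^{m+1}}$ may be applied term by term. Setting $n:=\frac{m-1}{2}$, which is a nonnegative integer because $m$ is odd, and using that $\partial_{x_0}^2$ and $\Delta_{\underline{x}}$ commute, the binomial theorem gives the \emph{finite} expansion $\Delta^{\frac{m-1}{2}}_{\mathbb{R}^{m+1}}=\sum_{b=0}^{n}\binom{n}{b}\partial_{x_0}^{2(n-b)}\Delta_{\underline{x}}^{b}$. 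Since $\Delta_{\underline{x}}^{b}$ lowers the degree in $\underline{x}$ by $2b$, after restriction to $\underline{x}=0$ only the terms with $j=2b$ survive (the odd powers $\underline{x}^{2b+1}$ reduce to a scalar multiple of $\underline{x}$ and contribute nothing), while iterating \eqref{actla} with the shifted arguments yields $\Delta_{\underline{x}}^{b}(\underline{x}^{2b})=\prod_{k=1}^{b}c(m,2k)=(-4)^b\,b!\,(\tfrac{m}{2})_b$. Meanwhile $\partial_{x_0}^{2(n-b)}$ sends $f_0^{(2b)}$ to $f_0^{(2n)}=f_0^{(m-1)}$. Collecting the surviving contributions gives
\begin{equation*}
\Delta^{\frac{m-1}{2}}_{\mathbb{R}^{m+1}} S[f_0]\big|_{\underline{x}=0}=\left(\sum_{b=0}^{n}\binom{n}{b}\frac{(-4)^b\,b!\,(\tfrac{m}{2})_b}{(2b)!}\right) f_0^{(m-1)}(x_0).
\end{equation*}

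It remains to show the bracketed coefficient equals $\gamma_m$, and this combinatorial evaluation is the real content of the argument. Using $(2b)!=4^b\,b!\,(\tfrac12)_b$ and $\binom{n}{b}(-1)^b=\frac{(-n)_b}{b!}$, the sum collapses to a terminating Gauss series,
\begin{equation*}
\sum_{b=0}^{n}\frac{(-n)_b\,(n+\tfrac12)_b}{(\tfrac12)_b\,b!}={}_2F_1\!\left(-n,\,n+\tfrac12;\,\tfrac12;\,1\right).
\end{equation*}
The Chu-Vandermonde identity ${}_2F_1(-n,\beta;\gamma;1)=\frac{(\gamma-\beta)_n}{(\gamma)_n}$ (valid precisely because the series terminates, i.e. because $n\in\mathbb{N}$) then gives the value $\frac{(-n)_n}{(\tfrac12)_n}=\frac{(-1)^n n!}{(2n)!/(4^n n!)}=\frac{(-1)^n 4^n (n!)^2}{(2n)!}$, which is exactly $\gamma_m$ after substituting $m=2n+1$, $\Gamma(\tfrac{m+1}{2})=n!$, and $(m-1)!=(2n)!$. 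The commutative diagram \eqref{Diag1} is then merely a restatement of \eqref{CGK5} together with the fact that $S$ is an isomorphism. The hard part is thus not conceptual but the bookkeeping of the iterated Laplacian: one must carefully iterate \eqref{actla} with the correct arguments $c(m,2k)$, confirm that the odd-degree terms of the slice series vanish at $\underline{x}=0$, and recognize the resulting alternating sum as a ${}_2F_1$ evaluable at $z=1$.
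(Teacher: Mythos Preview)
Your argument is correct. Note, however, that the paper does not supply its own proof of Theorem~\ref{FG}: it is quoted from \cite{DDG} as a preliminary result, so there is no in-paper proof to compare against line by line.

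That said, the method used in \cite{DDG}, and reproduced in this paper for the harmonic analogue (Theorem~\ref{Sceconn}), is genuinely different from yours. There the authors do not expand $\Delta_{\mathbb{R}^{m+1}}^{n}$ binomially; instead they exploit the two-dimensional harmonicity of $\alpha$ and $\beta$ together with Lemma~\ref{R1} to write $\Delta_{\mathbb{R}^{m+1}}^{n}[\alpha+\underline{\omega}\beta]$ as a constant times $\bigl(\tfrac{1}{r}\partial_r\bigr)^{n}\alpha+\underline{\omega}\bigl(\partial_r\tfrac{1}{r}\bigr)^{n}\beta$, and then use the explicit formulas \eqref{1N} for these radial operators on powers of $r$ applied to the Taylor series of $\alpha,\beta$. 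The restriction to $\underline{x}=0$ then yields the constant directly, without any hypergeometric summation. Your route is more self-contained (it needs only \eqref{actla} and Chu--Vandermonde, not Lemmas~\ref{R1}--\ref{R2}), while the paper's route has the advantage of giving closed formulas for $\Delta_{\mathbb{R}^{m+1}}^{n}f$ valid for all $\underline{x}$, not just at $\underline{x}=0$; these are reused in the proof of Lemma~\ref{lappn} and Theorem~\ref{Sceconn}.
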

\begin{rem}
In \cite{DDG} the authors showed the previous result for any dimension $m$.
\end{rem}

In this paper we are interested in the following class of functions
\begin{definition}[Axially harmonic function]
	\label{aah}
Let $\Omega \subset \mathbb{R}^{m+1}$ be an axially symmetric slice domain and let $\Omega_2 \subset \mathbb{C}$ be an intrinsic complex domain such that $\Omega=\Omega_2 \times \mathbb{S}^{m-1}$. A function $f \in \mathcal{A}(\Omega) \otimes \mathbb{R}_m$ is axially harmonic if it is of the form \eqref{form} and if it is in the kernel of the operator $ \Delta_{\mathbb{R}^{m+1}}$, i.e. $ \Delta_{\mathbb{R}^{m+1}}f=0$. This set of functions is denoted by $ \mathcal{AH}(\Omega)$.
\end{definition}

This type of functions are related to the Fueter-Sce's theorem. Indeed, from the splitting of the Laplace operator in terms of $ \mathcal{D}$ and $ \overline{\mathcal{D}}$  and Theorem \ref{FSQ2} we have the following corollary of Futer-Sce's theorem.

\begin{cor}
	\label{fact}
	Let $ \Omega_1 \subset \mathbb{R}$ be a real domain and $f_0 \in \mathcal{A}(\Omega_1) \otimes \mathbb{R}_m$. Then for $m \geq 3$ we have that $g(x):=\Delta^{\frac{m-3}{2}}_{\mathbb{R}^{m+1}} \mathcal{D} \circ  S[f_0](x_0, \underline{x})$ is an  axially harmonic function on a $(m+1)$-dimensional axially symmetric slice neighbourhood $\Omega \subset \mathbb{R}^{m+1}$ of $ \Omega_1$.
	Moreover, by applying the operator $\overline{\mathcal{D}}$ to the function $g(x)$, one obtains an axially monogenic function.
\end{cor}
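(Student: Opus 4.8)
The plan is to reduce the whole statement to the Fueter-Sce theorem (Theorem \ref{FSQ2}) by exploiting the factorization $\Delta_{\mathbb{R}^{m+1}} = \mathcal{D}\overline{\mathcal{D}} = \overline{\mathcal{D}}\mathcal{D}$ together with the fact that the constant-coefficient operators $\mathcal{D}$, $\overline{\mathcal{D}}$ and $\Delta_{\mathbb{R}^{m+1}}$ all commute (indeed $\mathcal{D}$ and $\overline{\mathcal{D}}$ commute precisely because their product in either order is the scalar operator $\Delta_{\mathbb{R}^{m+1}}$, which in turn commutes with everything). The key arithmetic remark is that for $m \geq 3$ odd one has $\frac{m-1}{2} = \frac{m-3}{2} + 1$ with $\frac{m-3}{2} \in \mathbb{N}_0$, so that $\Delta^{\frac{m-1}{2}}_{\mathbb{R}^{m+1}} = \Delta^{\frac{m-3}{2}}_{\mathbb{R}^{m+1}} \Delta_{\mathbb{R}^{m+1}}$ and $\Delta^{\frac{m-3}{2}}_{\mathbb{R}^{m+1}}\mathcal{D}$ is a genuine pointwise differential operator. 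All functions are defined on the axially symmetric slice neighbourhood $\Omega$ furnished by the slice extension map and the Fueter-Sce theorem.

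To show that $g = \Delta^{\frac{m-3}{2}}_{\mathbb{R}^{m+1}} \mathcal{D} \circ S[f_0]$ is harmonic, I would compute, using commutativity to move $\mathcal{D}$ to the outside,
\begin{equation*}
\Delta_{\mathbb{R}^{m+1}} g = \Delta^{\frac{m-1}{2}}_{\mathbb{R}^{m+1}} \mathcal{D} \circ S[f_0] = \mathcal{D}\left(\Delta^{\frac{m-1}{2}}_{\mathbb{R}^{m+1}} \circ S[f_0]\right).
\end{equation*}
By Theorem \ref{FSQ2} the function $\Delta^{\frac{m-1}{2}}_{\mathbb{R}^{m+1}} \circ S[f_0]$ is axially monogenic, hence lies in the kernel of $\mathcal{D}$, so $\Delta_{\mathbb{R}^{m+1}} g = 0$. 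For the \emph{moreover} part I would repeat the same idea,
\begin{equation*}
\overline{\mathcal{D}} g = \overline{\mathcal{D}}\, \Delta^{\frac{m-3}{2}}_{\mathbb{R}^{m+1}} \mathcal{D} \circ S[f_0] = \Delta^{\frac{m-3}{2}}_{\mathbb{R}^{m+1}} \overline{\mathcal{D}}\mathcal{D} \circ S[f_0] = \Delta^{\frac{m-1}{2}}_{\mathbb{R}^{m+1}} \circ S[f_0],
\end{equation*}
which is again axially monogenic by Theorem \ref{FSQ2}.

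What remains is the axial-form bookkeeping, which is the only step that is not purely formal. The slice extension $S[f_0]$ is of axial type by construction, and Lemma \ref{Newt} shows explicitly that $\mathcal{D}$, $\overline{\mathcal{D}}$ and $\Delta_{\mathbb{R}^{m+1}}$ each map a function of the form $A(x_0,|\underline{x}|) + \frac{\underline{x}}{|\underline{x}|} B(x_0,|\underline{x}|)$ to another function of the same form; iterating $\Delta^{\frac{m-3}{2}}_{\mathbb{R}^{m+1}}$ and applying $\mathcal{D}$ therefore keeps $g$ of axial type, so combined with harmonicity this gives $g \in \mathcal{AH}(\Omega)$. The delicate point to check here, rather than a formal manipulation, is that the operators preserve the parity conditions \eqref{cond} on the components, guaranteeing that the resulting axial function is well defined at $\underline{x} = 0$; this is exactly what the explicit coefficient formulas in Lemma \ref{Newt} provide, so the verification becomes routine once those formulas are invoked.
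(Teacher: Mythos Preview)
Your proof is correct and follows essentially the same route as the paper: both reduce harmonicity and the monogenicity of $\overline{\mathcal{D}}g$ to Theorem \ref{FSQ2} via the factorization $\Delta_{\mathbb{R}^{m+1}} = \overline{\mathcal{D}}\mathcal{D}$ and commutativity of these constant-coefficient operators, and both invoke Lemma \ref{Newt} to verify the axial form. Your treatment is in fact slightly more careful about the axial-type bookkeeping (the parity conditions \eqref{cond}) than the paper, which simply declares this step ``clear''.
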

\begin{proof}
We have to show that $g \in \mathcal{AH}(\Omega)$. It is clear that the function $g$ is of axial type. Now, we show that $g$ is in the kernel of the Laplace operator in $\mathbb{R}^{m+1}$. By Theorem \ref{FSQ2} and the fact that $\Delta_{\mathbb{R}^{m+1}}$ is a real-valued operator we get
$$\Delta_{\mathbb{R}^{m+1}} g(x)=\Delta^{\frac{m-1}{2}}_{\mathbb{R}^{m+1}} \mathcal{D}\circ  S[f_0](x_0, \underline{x})=0.$$
Now, we show that $\overline{\mathcal{D}}g(x)$ is axially monogenic. By Lemma \ref{Newt}, and noting that $g(x)$ is of axial type, we conclude that $\overline{\mathcal{D}}g(x)$ is also of axial type. Moreover, by the fact that the Laplace operator is factorized by the operators $ \mathcal{D}$ and $\overline{\mathcal{D}}$ we get
$$ \overline{\mathcal{D}} g(x)= \Delta^{\frac{m-1}{2}}_{\mathbb{R}^{m+1}} \circ  S[f_0](x_0, \underline{x}).$$
By Theorem \ref{FSQ2} and the above equality, we deduce that $ \overline{\mathcal{D}}g(x) $ is axially monogenic.

\end{proof}
\vspace{-5mm}
Thus, by merging the results of Theorem \ref{sliceex} and Theorem \ref{fact}, we derive the following mapping property:
$$ \Delta^{\frac{m-3}{2}}_{\mathbb{R}^{m+1}} \mathcal{D}: \mathcal{SM}(\Omega) \to \mathcal{AH}(\Omega).$$ 
\newline
\newline
The aim of this paper is to factorize diagram \eqref{Diag1}. In order to do this we need to complete the missing arrows from the following diagram

\begin{equation}\label{Diag2bis}
\begin{tikzcd}[row sep = 3em, column sep = 6em]
\mathcal{A}(\Omega_1)\otimes \mathbb{R}_m\arrow[r, "S", rightarrow] \arrow[d, dashed, "?", labels=left] & \mathcal{SM}(\Omega) \arrow[d, "\Delta_{\mathbb{R}^{m+1}}^{\frac{m-3}{2}} \mathcal{D}"] \\
? \arrow[r, dashed, "?" rightarrow]    \arrow[d, dashed, "?", labels=left] & \mathcal{AH}(\Omega) \arrow[d, "\overline{\mathcal{D}}"]\\
\mathcal{A}(\Omega_1)\otimes \mathbb{R}_m \arrow[r, "GCK", rightarrow]                 & \mathcal{AM}(\Omega)\\
\end{tikzcd}
\end{equation}
We start to address the problem from the next section.

\section{Generalized Cauchy-Kovalevskaya extension for axially harmonic functions}
This section aims to explain the central left dashed horizontal arrow in diagram \eqref{Diag2bis}, utilizing the generalized CK-extension method for axially harmonic functions. To establish this result, we examine the conditions under which a set of analytic Clifford-valued functions $\{A_j(x_0)\}_{j \in \mathbb{N}_0}$ defined on an open subset $\Omega_1 \subset \mathbb{R}$ allows the series $\sum_{j=0}^\infty \underline{x}^j A_{j}(x_0)$ to converge in a neighborhood $\Omega \subset \mathbb{R}^{m+1}$ of $\Omega_1$, in such a way that the resulting function is axially harmonic. We will show that the domain of convergence $ \Omega$ is an axially symmetric $(m+1)$-dimensional neighbourhood around $\Omega_1$. This situation is summarized in the following generalized Cauchy-Kovalevskaya property for axially harmonic functions.

%$$f(x_0, \underline{x})= A_0(x_0)+ \underline{x} A_1(x_0)+\underline{x}^2 A_2(x_0)+...$$
%$$ \partial_{\underline{x}} f(x_0, \underline{x})=0+ %\partial_{\underline{x}}(\underline{x}) A_1(x_0)+ \partial_{\underline{x}}(\underline{x}^2) A_2(x_0)+...$$
%$$ \partial_{\underline{x}} f(x_0, \underline{x})|_{ \underline{x}=0}=-m A_1(x_0).$$

\begin{thm}
\label{GCK}
Let $A_0$ and $A_1$ be two analytic Clifford-valued functions of one real variable $x_0$, defined in an open subset $ \Omega_1$ of the real line. Then there exist a unique sequence of functions $ \{A_j\}_{j \in \mathbb{N}_0}$ such that the series
\begin{equation}
\label{for}
f(x_0, \underline{x})= \sum_{j=0}^\infty \underline{x}^j A_j(x_0)
\end{equation}
converges in an axially symmetric $(m+1)$-dimensional neighborhood $ \Omega \subset \mathbb{R}^{m+1}$ of $ \Omega_1$ and such that $f(x_0, \underline{x})$ is harmonic (i.e. $\Delta_{\mathbb{R}^{m+1}} f(x_0, \underline{x})=0$).
\\ Moreover,
\begin{equation}
\label{harmo}
f(x_0, \underline{x})= \Gamma\left(\frac{m}{2}\right) \left(\frac{|\underline{x}| \partial_{x_0}}{2}\right)^{- \frac{m}{2}} \left[\left(\frac{|\underline{x}| \partial_{x_0}}{2}\right) J_{\frac{m}{2}-1}(|\underline{x}| \partial_{x_0})[A_0(x_0)]+ \frac{m \underline{x}}{2} J_{\frac{m}{2}}\left(| \underline{x}| \partial_{x_0}\right)[A_1(x_0)]\right],
\end{equation}
and the initial functions $A_0$ and $A_1$ can be recovered by
$$ f(x_0,\underline{x})\biggl|_{\underline{x}=0}=A_0(x_0),$$
\begin{equation}
	\label{ini}
 -\frac{1}{m} \partial_{\underline{x}} [f(x_0, \underline{x})]\biggl|_{\underline{x}=0}=A_1(x_0)
\end{equation}
The function in equation \eqref{harmo} is the harmonic generalized CK-extension of the couple $(A_0,A_1)$, and it is denoted by $HGCK[A_0,A_1]$.
\end{thm}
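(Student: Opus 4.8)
The plan is to turn the harmonicity requirement on the ansatz \eqref{for} into a recurrence for the coefficients $A_j$. Since $\Delta_{\mathbb{R}^{m+1}}=\partial_{x_0}^2+\Delta_{\underline{x}}$ and $\underline{x}^j$ is independent of $x_0$ while $A_j$ is independent of $\underline{x}$, a term-by-term application gives $\partial_{x_0}^2(\underline{x}^jA_j)=\underline{x}^jA_j''$, and by \eqref{actla} we have $\Delta_{\underline{x}}(\underline{x}^j)=c(m,j)\underline{x}^{j-2}$ for $j\geq 2$ (and $0$ for $j\in\{0,1\}$). After the reindexing $j\mapsto j+2$ in the second sum this would yield
$$\Delta_{\mathbb{R}^{m+1}}f=\sum_{j=0}^\infty \underline{x}^j\big(A_j''(x_0)+c(m,j+2)A_{j+2}(x_0)\big).$$
To extract a recurrence I would argue that the coefficients of such an $\underline{x}$-power series are unique: restricting to $\underline{x}=r\underline{\omega}$ with $\underline{\omega}$ fixed and $r\in\mathbb{R}$, one has $\underline{x}^{2p}=(-1)^p r^{2p}$ and $\underline{x}^{2p+1}=(-1)^p r^{2p+1}\underline{\omega}$, so splitting into even and odd parts in $r$ and using that $\underline{\omega}$ is invertible forces each Clifford-valued bracket to vanish. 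This gives $A_{j+2}=-c(m,j+2)^{-1}A_j''$ for all $j\geq 0$, which simultaneously proves uniqueness and shows that the even-indexed coefficients are generated by $A_0$ and the odd-indexed ones by $A_1$.

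Next I would solve the two decoupled recurrences. Using $c(m,2p+2)=-4(p+1)\left(\tfrac{m}{2}+p\right)$ and $c(m,2p+3)=-4(p+1)\left(\tfrac{m}{2}+p+1\right)$ and iterating yields
$$A_{2p}=\frac{A_0^{(2p)}}{4^p\,p!\,\left(\tfrac{m}{2}\right)_p},\qquad A_{2p+1}=\frac{A_1^{(2p)}}{4^p\,p!\,\left(\tfrac{m}{2}+1\right)_p},$$
with Pochhammer symbols in the denominators. Substituting these back, separating \eqref{for} into even and odd parts and using $\underline{x}^{2p}=(-1)^p|\underline{x}|^{2p}$, each part becomes a power series in $\left(\tfrac{|\underline{x}|\partial_{x_0}}{2}\right)^2$. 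I would then invoke the identity $\sum_{p\geq 0}\tfrac{(-1)^p}{p!\,(\nu+1)_p}\left(\tfrac{z}{2}\right)^{2p}=\Gamma(\nu+1)\left(\tfrac{z}{2}\right)^{-\nu}J_\nu(z)$, applied with $\nu=\tfrac{m}{2}-1$ for the even part and $\nu=\tfrac{m}{2}$ for the odd part, and with the operator argument $z=|\underline{x}|\partial_{x_0}$. Finally, using $\Gamma\!\left(\tfrac{m}{2}+1\right)=\tfrac{m}{2}\Gamma\!\left(\tfrac{m}{2}\right)$ and the fact that $\underline{x}$ commutes with both $\partial_{x_0}$ and $|\underline{x}|$ to reposition the factor $\underline{x}$, the two parts would assemble exactly into the closed form \eqref{harmo}.

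The recovery formulas follow by evaluation at $\underline{x}=0$. In \eqref{for} only the $j=0$ term survives, giving $f|_{\underline{x}=0}=A_0$. For \eqref{ini} I would compute the Dirac action $\partial_{\underline{x}}(\underline{x}^j)$, noting $\partial_{\underline{x}}\underline{x}=-m$ while for $j\geq 2$ every $\partial_{\underline{x}}(\underline{x}^j)$ carries a positive power of $|\underline{x}|$ and thus vanishes at the origin; hence $\partial_{\underline{x}}f|_{\underline{x}=0}=-m\,A_1$, which is \eqref{ini}. Convergence I would establish exactly as in the classical generalized CK-extension of Theorem \ref{GCK1}: since $A_0,A_1$ are real-analytic on $\Omega_1$, local Cauchy estimates bound $|A_0^{(2p)}|,|A_1^{(2p)}|$ by $C\,(2p)!\,R^{-2p}$; combined with the denominators $4^p\,p!\left(\tfrac m2\right)_p$ and $4^p\,p!\left(\tfrac m2+1\right)_p$, the Stirling asymptotics of these quotients dominate the general term by $C'\,(|\underline{x}|/R)^{2p}$ up to polynomial factors, so the series converges absolutely and locally uniformly for $|\underline{x}|<R$. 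Axial symmetry of the domain of convergence is automatic, since $f$ depends on $\underline{x}$ only through $|\underline{x}|$ and $\underline{x}$ itself.

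I expect the main obstacle to be the bookkeeping of the resummation step: matching the two Pochhammer-normalized series to the exact Bessel-function prefactors of \eqref{harmo}, including the correct handling of the operator-valued argument $|\underline{x}|\partial_{x_0}$, the placement of the non-commuting factor $\underline{x}$, and the $\Gamma\!\left(\tfrac m2+1\right)=\tfrac m2\Gamma\!\left(\tfrac m2\right)$ normalization. By comparison the convergence estimate, though requiring some analytic care, is routine and parallels the monogenic case of Theorem \ref{GCK1}.
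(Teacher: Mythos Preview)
Your proposal is correct and follows essentially the same route as the paper: apply $\Delta_{\mathbb{R}^{m+1}}=\partial_{x_0}^2+\Delta_{\underline{x}}$ term by term, use \eqref{actla} to obtain the two-step recurrence $A_{j+2}=-c(m,j+2)^{-1}A_j''$, solve the even and odd towers separately to the same Pochhammer/Gamma form, resum via the Bessel series identity, and prove convergence from Cauchy estimates on $A_0,A_1$ (the paper uses the ratio test rather than Stirling, but the estimate is the same). Your added justifications---the coefficient-extraction argument for uniqueness via restriction to a ray $\underline{x}=r\underline{\omega}$, and the verification of the recovery formula \eqref{ini} from $\partial_{\underline{x}}\underline{x}=-m$---are points the paper leaves implicit, so if anything your write-up is slightly more complete.
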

\begin{proof}
Given the two initial functions $A_0$ and $A_1$, we must prove the existence and uniqueness of the harmonic extension \eqref{harmo}.
\\ Let us start by showing the uniqueness, i.e. we will show that if \eqref{harmo} exists, then it is unique. Indeed, by applying the Laplace operator $ \Delta_{\mathbb{R}^{m+1}}:= \partial_{x_0}^2+ \Delta_{\underline{x}}$ to both side of \eqref{for}, and using formula \eqref{actla}, we get
\begin{eqnarray*}
\nonumber
\Delta_{\mathbb{R}^{m+1}} f(x_0, \underline{x}) &=& (\partial_{x_0}^2+ \Delta_{\underline{x}}) \left( \sum_{j=0}^\infty \underline{x}^j A_j(x_0)\right)\\
\nonumber
&=& \sum_{j=0}^\infty \underline{x}^j \partial_{x_0}^2 A_j(x_0)+ \sum_{j=2}^\infty \Delta_{\underline{x}}(\underline{x}^j) A_j(x_0)\\
\nonumber
&=& \sum_{j=0}^\infty \underline{x}^j \partial^2_{x_0} A_j(x_0)+\sum_{j=0}^\infty \Delta_{\underline{x}}[\underline{x}^{j+2}] A_{j+2}(x_0)\\
\label{step0}
&=& \sum_{j=0}^\infty \underline{x}^j [\partial_{x_0}^2 A_j(x_0)+c(m,j+2)A_{j+2}(x_0)],
\end{eqnarray*}
where the constant $c(m,j+2)$ is defined in \eqref{actla}.
From the condition $ \Delta_{\mathbb{R}^{m+1}} f(x_0, \underline{x})=0$ we get the recursion formula
\begin{equation}
\label{NN}
 A_{j+2}(x_0)= -\frac{1}{c(m, j+2)} \partial_{x_0}^2 A_{j}(x_0).
\end{equation}
Now, we use formula \eqref{NN} recursively for even indexes and we get  
$$ A_{2j}(x_0)=- \frac{\partial_{x_0}^2 A_{2(j-1)}(x_0)}{c(m, 2j)}=...= \frac{(-1)^j \partial_{x_0}^{2j}A_0(x_0)}{\prod_{k=1}^j c(m,2k)}.$$
Since $\prod_{k=1}^j c(m,2k)= \prod_{k=1}^j -4k \left(\frac{m}{2}+k-1\right)=(-4)^j j! \frac{\Gamma \left(\frac{m}{2}+j\right)}{\Gamma \left(\frac{m}{2}\right)}$ we get
\begin{equation}
\label{even0}
A_{2j}(x_0)= \frac{\Gamma\left(\frac{m}{2}\right)}{2^{2j} j! \Gamma \left(\frac{m}{2}+j\right)} \partial_{x_0}^{2j} A_{0}(x_0).
\end{equation}
Similarly if we use formula \eqref{NN} recursively for odd values we obtain
$$ A_{2j+1}(x_0)=- \frac{\partial_{x_0}^2 A_{2j-1}(x_0)}{c(m, 2j+1)}=...= \frac{(-1)^j \partial_{x_0}^{2j}A_1(x_0)}{\prod_{k=1}^{j} c(m,2k+1)}.$$
Since $\prod_{k=1}^{j} c(m,2k+1)= \prod_{k=1}^{j} -4k \left(\frac{m}{2}+k\right)=(-4)^j j! \frac{\Gamma\left(\frac{m}{2}+j+1\right)}{\Gamma\left(\frac{m}{2}+1\right)}$ we have 
\begin{equation}
\label{even01}
A_{2j+1}(x_0)= \frac{\Gamma\left(\frac{m}{2}+1\right)}{2^{2j} j! \Gamma \left(\frac{m}{2}+j+1\right)} \partial_{x_0}^{2j} A_{1}(x_0).
\end{equation}
Now, we substitute \eqref{even0} and \eqref{even01} on \eqref{for} and we get
\begin{eqnarray}
\nonumber
f(x_0, \underline{x})&=& \sum_{j=0}^{\infty} \underline{x}^j A_{2j}(x_0)+ \sum_{j=0}^{\infty} \underline{x}^{2j+1} A_{2j+1}(x_0)\\
\nonumber
&=&\sum_{j=0}^\infty \underline{x}^{2j} \frac{\Gamma\left(\frac{m}{2}\right)}{2^{2j} j! \Gamma \left(\frac{m}{2}+j\right)} \partial_{x_0}^{2j} A_{0}(x_0)+ \sum_{j=0}^\infty \underline{x}^{2j+1} \frac{\Gamma\left(\frac{m}{2}+1\right)}{2^{2j} j! \Gamma \left(\frac{m}{2}+j+1\right)} \partial_{x_0}^{2j} A_{1}(x_0)\\
\label{star1}
&=& \Gamma \left(\frac{m}{2}\right) \left[ \sum_{j=0}^\infty \frac{(-1)^j|\underline{x}|^{2j} \partial_{x_0}^{2j}}{2^{2j} j! \Gamma \left(\frac{m}{2}+j\right)}[A_0](x_0)+ \underline{x} \frac{m}{2}\sum_{j=0}^\infty \frac{(-1)^j|\underline{x}|^{2j} \partial_{x_0}^{2j}}{2^{2j} j! \Gamma \left(\frac{m}{2}+j+1\right)}[A_1](x_0)\right].
\end{eqnarray}
Using the defining expansion series of the Bessel functions $ \left(\frac{z}{2}\right)^{-\nu} J_{\nu}(z)= \sum_{j=0}^\infty \frac{(-1)^j z^{2j}}{2^{2j}j! \Gamma(\nu+j+1)}$, with   $\nu=\frac{m}{2}-1$ in the first sum, $\nu=\frac{m}{2}$ in the second sum and $z=| \underline{x}| \partial_{x_0}$ in both series, we get the formula in the statement.
\\ Let us show the existence of such a function in \eqref{harmo}, i.e., we have to show that the series in \eqref{star1} converges in a $(m+1)-$ dimensional neighbourhood of $\Omega_1$. To this end it is enough to show the convergence of the series
\begin{equation}
\label{series1}
\sum_{j=0}^\infty \frac{\underline{x}^{2j} \partial_{x_0}^{2j}[A_0](x_0) }{2^{2j} j! \Gamma \left( \frac{m}{2}+1\right)}.
\end{equation}
The convergence of the other series involved in \eqref{star1} follows from similar arguments. Now, we recall that $A_0(x_0)$ is a real analytic function. Thus, for any compact set $K \subset \Omega$, there exists constants $C_K$, $ \lambda_k >0$ such that
$$ | \partial_{x_0}^{2j}[A_0](x_0)| \leq C_K (2j)! \lambda_k^{2j}, \qquad \forall j \in \mathbb{N}_0, \quad x_0 \in K.$$ 
Hence, for $x_0 \in K$ we have
$$ \sum_{j=0}^\infty \biggl| \frac{\underline{x}^{2j} \partial_{x_0}^{2j}[A_0](x_0)}{2^{2j} j! \Gamma \left( \frac{m}{2}+1\right)} \biggl| \leq C_K \sum_{j=0}^\infty \frac{|\underline{x}|^{2j} (2j)! \lambda_k^{2j}}{2^{2j} j! \Gamma \left( \frac{m}{2}+j\right)}.$$
% \qquad \Omega= \bigcup_{K \subset \Omega, \, \hbox{compact}} K \times B(0, 1/\lambda_k)
If we set $ \ell_j:=\frac{|\underline{x}|^{2j} (2j)! \lambda_k^{2j}}{2^{2j} j! \Gamma \left( \frac{m}{2}+j\right)}$, by applying the ratio test to the series in the right-hand side of the above inequality, we get
$$ \lim_{j \to \infty} \frac{\ell_{j+1}}{\ell_{j}}=\lim_{j \to \infty} \frac{| \underline{x}|^2 (2j+2)(2j+1) \lambda_k^2}{4 (j+1) \left(  \frac{m}{2}+j\right)}=|\underline{x}|^2 \lambda_k^2.$$
Thus, the series  in \eqref{series1} converges if $ | \underline{x}| < \frac{1}{\lambda_k}$. Therefore the series in \eqref{star1} converges in an axially symmetric $(m+1)$- dimensional neighbourhood $ \Omega$ of $ \Omega_1$.
\end{proof}

The above theorem defines a one-to-one correspondence between $( \mathcal{A}_1(\Omega_1) \otimes \mathbb{R}_m)^2$ and the space $ \mathcal{AH}(\Omega)$ of harmonic functions of axial type in $ \Omega \subset \mathbb{R}^{m+1}$. This mapping is denoted by
$$ HGCK: ( \mathcal{A}_1(\Omega_1) \otimes \mathbb{R}_m)^2 \to \mathcal{AH}(\Omega),$$
where $( \mathcal{A}_1(\Omega_1) \otimes \mathbb{R}_m)^2$ denotes two analytic functions. The mapping HGCK fits with the diagram \eqref{Diag2bis}. 

\begin{rem}
We can split the harmonic generalized CK-extension as a sum of two harmonic generalized CK-extensions, one gives as a result a scalar result while the other gives a 1-vector. Precisely, we have
$$ HGCK[A_0,A_1]=HGCK[A_0,0]+HGCK[0,A_1].$$ 
\end{rem}

We can write the CK-extension of axially harmonic functions in terms of the zero and one vector parts of the generalized CK-extension of axially monogeinc functions.

\begin{prop}
\label{split}
Let $A_0$, $A_1$ be two analytic functions of one real variable $x_0$, defined in on open subset $ \Omega_1$ of the real line. Then we can write the CK-extension of axially harmonic functions in terms of the generalized CK-extension of axially monogeinc functions, i.e.
\begin{equation}
\label{H1}
HGCK[A_0,A_1]=[GCK[A_0]]_0+ m GCK[[\mathcal{A}_1]]_1,
\end{equation}   
where $ \mathcal{A}_1$ is the primitive of $A_1$.
Moreover, we can write the generalized CK-exstension for axially monogenic functions in the following way
\begin{equation}
	\label{H2}
GCK[A_0]=HGCK[A_0,0]+ \frac{1}{m} HGCK[0, \partial_{x_0}A_0].
\end{equation}
\end{prop}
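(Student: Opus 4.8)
The plan is to prove both \eqref{H1} and \eqref{H2} by a direct comparison of the two closed Bessel--operator forms already available to us: the monogenic extension \eqref{Exx} and the harmonic extension \eqref{harmo}. The crucial preliminary observation is that \eqref{Exx} is displayed as a sum of exactly two summands, one carrying no factor of the $1$-vector $\underline{x}$ (only the scalar radius $|\underline{x}|$) and one carrying $\underline{x}$; these are precisely its even- and odd-in-$\underline{x}$ parts, i.e.\ (for scalar-valued data, or after restricting to a fixed grade) its projections $[\,\cdot\,]_0$ and $[\,\cdot\,]_1$. Writing $T:=\Gamma\bigl(\tfrac{m}{2}\bigr)\bigl(\tfrac{|\underline{x}|\partial_{x_0}}{2}\bigr)^{-\frac{m}{2}}$, I would record
\begin{align*}
[GCK[f_0]]_0 &= T\,\tfrac{|\underline{x}|\partial_{x_0}}{2}\,J_{\frac{m}{2}-1}(|\underline{x}|\partial_{x_0})\,f_0,\\
[GCK[f_0]]_1 &= T\,\tfrac{\underline{x}\,\partial_{x_0}}{2}\,J_{\frac{m}{2}}(|\underline{x}|\partial_{x_0})\,f_0,
\end{align*}
so that $GCK[f_0]=[GCK[f_0]]_0+[GCK[f_0]]_1$, and similarly read off from \eqref{harmo} that the $A_0$-summand of $HGCK[A_0,A_1]$ is literally $T\,\tfrac{|\underline{x}|\partial_{x_0}}{2}J_{\frac{m}{2}-1}(|\underline{x}|\partial_{x_0})A_0=[GCK[A_0]]_0$, while the $A_1$-summand is $T\,\tfrac{m\underline{x}}{2}J_{\frac{m}{2}}(|\underline{x}|\partial_{x_0})A_1$.

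The key algebraic fact to exploit is that every operator occurring here --- the prefactor $T$, the Bessel operators, and the scalar multiplier $|\underline{x}|$ --- is a function of $\partial_{x_0}$ alone and hence commutes with $\partial_{x_0}$; the only non-scalar ingredient is the $x_0$-independent left factor $\underline{x}$. Consequently, for the primitive $\mathcal{A}_1$ with $\partial_{x_0}\mathcal{A}_1=A_1$ I can slide the derivative through the Bessel operator:
$$
[GCK[\mathcal{A}_1]]_1 = T\,\tfrac{\underline{x}}{2}\,J_{\frac{m}{2}}(|\underline{x}|\partial_{x_0})\,\partial_{x_0}\mathcal{A}_1 = T\,\tfrac{\underline{x}}{2}\,J_{\frac{m}{2}}(|\underline{x}|\partial_{x_0})\,A_1 .
$$
Comparing with the $A_1$-summand $T\,\tfrac{m\underline{x}}{2}J_{\frac{m}{2}}(|\underline{x}|\partial_{x_0})A_1$ of \eqref{harmo} shows it equals $m\,[GCK[\mathcal{A}_1]]_1$; adding this to $[GCK[A_0]]_0$ yields \eqref{H1}. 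This computation also makes transparent what is really happening: the grade-$1$ part of the \emph{monogenic} extension carries one extra factor $\partial_{x_0}$ compared with the $1$-vector part of the \emph{harmonic} extension, and it is exactly this surplus derivative that the primitive $\mathcal{A}_1$ absorbs, while the numerical gap between $\tfrac{\underline{x}\partial_{x_0}}{2}$ and $\tfrac{m\underline{x}}{2}$ is absorbed by the prefactor $m$.

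For \eqref{H2} I would run the same identity backwards. From \eqref{harmo}, $HGCK[A_0,0]=T\,\tfrac{|\underline{x}|\partial_{x_0}}{2}J_{\frac{m}{2}-1}(|\underline{x}|\partial_{x_0})A_0=[GCK[A_0]]_0$, and, using once more the commutation of $\partial_{x_0}$ with the Bessel operator, $\tfrac{1}{m}HGCK[0,\partial_{x_0}A_0]=T\,\tfrac{\underline{x}}{2}J_{\frac{m}{2}}(|\underline{x}|\partial_{x_0})\partial_{x_0}A_0=[GCK[A_0]]_1$; summing the two recovers $GCK[A_0]=[GCK[A_0]]_0+[GCK[A_0]]_1$. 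I expect the only genuinely delicate point to be bookkeeping rather than any deep idea: one must justify that $[\,\cdot\,]_0,[\,\cdot\,]_1$ single out exactly the two displayed summands of \eqref{Exx} (legitimate because $\underline{x}$ is a $1$-vector whereas $|\underline{x}|$, $\partial_{x_0}$ and the $\Gamma$/Bessel factors are scalar operators), and that the derivative shift produced by the primitive is matched with the correct power of two and $\Gamma$-factor. Equivalently, the entire argument can be carried out on the series \eqref{star1} against the expansion of \eqref{Exx}, where it reduces to matching the coefficient $\tfrac{m}{2}\,2^{-2j}$ (from \eqref{harmo}) with $m\,2^{-(2j+1)}$ (from $m\,[GCK[\mathcal{A}_1]]_1$), both equal to $m\,2^{-(2j+1)}$. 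As a conceptual cross-check one could instead invoke the uniqueness part of Theorem \ref{GCK}: the right-hand side of \eqref{H1} is axially harmonic (grade projections of monogenic, hence harmonic, functions remain harmonic since $\Delta_{\mathbb{R}^{m+1}}$ is grade-preserving), and evaluating its restriction at $\underline{x}=0$ together with \eqref{ini} returns the pair $(A_0,A_1)$, forcing equality; I would nonetheless present the operator version as the main proof and keep the series/uniqueness arguments as confirmation.
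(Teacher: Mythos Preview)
Your proposal is correct and follows essentially the same approach as the paper: both proofs identify the two summands of \eqref{Exx} as $[GCK[f_0]]_0$ and $[GCK[f_0]]_1$, then match them against the two summands of \eqref{harmo} by absorbing or inserting one factor of $\partial_{x_0}$ via the primitive $\mathcal{A}_1$ (for \eqref{H1}) or the derivative $\partial_{x_0}A_0$ (for \eqref{H2}). The only cosmetic difference is that the paper carries out the derivative shift by expanding the Bessel operators into their defining power series, whereas you argue directly at the operator level via commutation of $\partial_{x_0}$ with functions of $\partial_{x_0}$; both are valid and yield the same identities.
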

\begin{proof}
	We start proving \eqref{H1}. By \eqref{Exx} it is clear that
\begin{equation}
\label{H4}
[GCK[A_0]]_0=\Gamma \left(\frac{m}{2}\right) \left( \frac{|\underline{x}|\partial_{x_0}}{2} \right)^{- \frac{m}{2}} \frac{|\underline{x}|\partial_{x_0}}{2} J_{\frac{m}{2}-1}\left( |\underline{x}|\partial_{x_0} \right).
\end{equation}
By using another time \eqref{Exx}, the definition of the Bessel functions and the fact that $ \mathcal{A}_1(x_0)$ is a primitive of $A_1(x_0)$ we have
\begin{eqnarray}
	\nonumber
m GCK[[\mathcal{A}_1]]_1&=&m \Gamma \left(\frac{m}{2}\right) \left( \frac{|\underline{x}|\partial_{x_0}}{2} \right)^{- \frac{m}{2}} \left( \frac{\underline{x}}{2} J_{\frac{m}{2}} \left(| \underline{x}| \partial_{x_0} \right)\right)[\mathcal{A}_1](x_0)\\
\nonumber
&=&\Gamma\left(\frac{m}{2}\right) m \frac{\underline{x}}{2} \sum_{j=0}^\infty \frac{(-1)^j | \underline{x}|^{2j} \partial_{x_0}^{2j}[A_1](x_0)}{2^{2j }j! \Gamma \left(\frac{m}{2}+1+j\right)}\\
\label{H3}
&=& \Gamma\left(\frac{m}{2}\right) \left(\frac{|\underline{x}| \partial_{x_0}}{2}\right)^{- \frac{m}{2}} \left(\frac{m \underline{x}}{2} J_{\frac{m}{2}}\left(| \underline{x}| \partial_{x_0}\right)\right)[A_1(x_0)].
\end{eqnarray}
By making the sum of \eqref{H4} and \eqref{H3}, the result follows by \eqref{harmo}.
\\ Now, we prove \eqref{H2}. By \eqref{harmo} it is clear that
\begin{equation}
\label{H6}
	HGCK[A_0,0]=\Gamma \left(\frac{m}{2}\right) \left( \frac{|\underline{x}|\partial_{x_0}}{2} \right)^{- \frac{m}{2}} \frac{|\underline{x}|\partial_{x_0}}{2} J_{\frac{m}{2}-1}\left( |\underline{x}|\partial_{x_0} \right)[A_0(x_0)].
\end{equation}
By using another time \eqref{harmo} and the definition of the Bessel functions we have
\begin{eqnarray}
\nonumber
\frac{1}{m} HGCK[0, \partial_{x_0}A_0]&=&\Gamma \left(\frac{m}{2}\right) \left( \frac{|\underline{x}|\partial_{x_0}}{2} \right)^{- \frac{m}{2}} \left( \frac{\underline{x}}{2} J_{\frac{m}{2}} \left(| \underline{x}| \partial_{x_0} \right)\right)[\partial_{x_0} A_0](x_0)\\
\nonumber
&=& \Gamma \left(\frac{m}{2}\right) \frac{\underline{x}}{2} \partial_{x_0}\sum_{j=0}^\infty \frac{(-1)^j | \underline{x}|^{2j} \partial_{x_0}^{2j}[A_0](x_0)}{2^{2j }j! \Gamma \left(\frac{m}{2}+1+j\right)}\\
\label{H5}
&=& \Gamma\left(\frac{m}{2}\right) \left(\frac{|\underline{x}| \partial_{x_0}}{2}\right)^{- \frac{m}{2}} \left(\frac{ \underline{x}}{2} \partial_{x_0} J_{\frac{m}{2}}\left(| \underline{x}| \partial_{x_0}\right)\right)[A_0(x_0)].
\end{eqnarray}
Finally\eqref{H2} follows by making the sum of \eqref{H5} and \eqref{H6}, and by using \eqref{Exx}.
\end{proof}

Now, we present some examples of axially harmonic functions constructed using the generalized CK-extension for axially harmonic functions.

\begin{ex}
We consider as initial functions $A_0(x_0)=A_1(x_0)=e^{x_0}$. We want to use formula \eqref{H1}. Thus we observe that $ \mathcal{A}_1(x_0)= e^{x_0}$. By \cite[Remark 2.1]{DESS0} we know that
\begin{equation}
\label{exp}
GCK[A_1]= GCK[\mathcal{A}_1]= \Gamma \left( \frac{m}{2}\right) 2^{m/2-1}  \left( \widetilde{J}_{\frac{m}{2}-1}(|\underline{x}|)+\widetilde{J}_{\frac{m}{2}}(| \underline{x}|) \underline{x}\right) e^{x_0},
\end{equation}
where $\widetilde{J}_{\nu}(\rho)= \rho^{-\nu} J_{\nu}(\rho)$ being the Bessel function. Thus by \eqref{H1} we have
$$ HGCK[A_0,A_1]= \Gamma \left( \frac{m}{2}\right) 2^{m/2-1}  \left( \widetilde{J}_{\frac{m}{2}-1}(|\underline{x}|)+m\widetilde{J}_{\frac{m}{2}}(| \underline{x}|) \underline{x}\right) e^{x_0}.$$
\end{ex}

\begin{ex}
We consider as initial functions $A_0(x_0)=e^{x_0}$  and $A_1(x_0)=-2x_0 e^{-x_0^2}$. A primitive of $A_1(x_0)$ is given by $ \mathcal{A}_1(x_0)= e^{-x_0^2}$. By \cite{DESS0} we know that
\begin{eqnarray}
\label{herm}
GCK[A_1]&=& \Gamma \left(\frac{m}{2}\right) \left( \sum_{\ell=0}^\infty \frac{(-1)^\ell | \underline{x}|^{2 \ell} H_{2 \ell}(x_0)}{\Gamma \left(\frac{m}{2}+ \ell\right) 2^{2 \ell } \ell !} \right) e^{- \frac{x_0^2}{2}}\\
\nonumber
&&-\Gamma \left(\frac{m}{2}\right) \left( \sum_{\ell=0}^\infty \frac{(-1)^\ell | \underline{x}|^{2 \ell+1} H_{2 \ell+1}(x_0)}{\Gamma \left(\frac{m}{2}+ \ell+1\right) 2^{2 \ell+1 } \ell !} \right)  \underline{\omega}e^{- \frac{x_0^2}{2}}, \qquad \underline{\omega}=\frac{\underline{x}}{|\underline{x}|},
\end{eqnarray}
where $H_n(x_0)$, $n \in \mathbb{N}$, are the Hermite polynomials given by $H_n(x_0)=n! \sum_{i=0}^{ \lfloor \frac{n}{2} \rfloor} \frac{(-1)^i x_0^{n-2i}}{i! 2^i (n-2i)!}$.  Thus by \eqref{H1}, \eqref{exp} and \eqref{herm} we have 	
$$ HCGK[A_0,A_1]=\Gamma \left(\frac{m}{2}\right)\left[ 2^{m/2-1}  \widetilde{J}_{\frac{m}{2}-1}(|\underline{x}|)e^{x_0}-m\sum_{\ell=0}^\infty \frac{(-1)^\ell | \underline{x}|^{2 \ell+1} H_{2 \ell+1}(x_0)}{\Gamma \left(\frac{m}{2}+ \ell+1\right) 2^{2 \ell+1 } \ell !}   \underline{\omega}e^{- \frac{x_0^2}{2}} \right].$$
\end{ex}

\begin{ex}
We consider as initial functions $A_0(x_0)=e^{-x_0^2}$  and $A_1(x_0)=e^{x_0}$.	It is clear that a primitive of $A_1(x_0)$ is given by $ \mathcal{A}_1(x_0)=e^{x_0}$. Thus by \eqref{H1}, \eqref{exp} and \eqref{herm} we have 	
$$ HCGK[A_0,A_1]=\Gamma \left(\frac{m}{2}\right)\left[\sum_{\ell=0}^\infty \frac{(-1)^\ell | \underline{x}|^{2 \ell} H_{2 \ell}(x_0)}{\Gamma \left(\frac{m}{2}+ \ell\right) 2^{2 \ell } \ell !} e^{- \frac{x_0^2}{2}}+m\widetilde{J}_{\frac{m}{2}}(| \underline{x}|) \underline{x} e^{x_0} \right].$$
\end{ex}
The results of this section together with Theorem \ref{FG} will be useful in the Section 5 to establish a connection between the generalized CK-extension for axially harmonic functions and the Fueter-Sce's theorem.

\section{Plane waves decomposition}

The generalized CK-extension for axially monogenic functions can be expressed in terms of integrals over the sphere $\mathbb{S}^{m-1}$ of functions of plane wave type, see \cite{DESS}. These are functions depending on the inner product $ \langle \underline{x}, \underline{\omega} \rangle$, where the 1-vector variable $ \underline{\omega}$ is independent of $ \underline{x}$. 

\begin{thm}
	Let $ \Omega_1 \subset \mathbb{R}$ be a real domain and let $f_0 \in \mathcal{A}(\Omega_1) \otimes \mathbb{R}_m$. Then
	$$ GCK[f_0](x_0, \underline{x})= \frac{\Gamma\left(\frac{m}{2}\right)}{2 \pi^{\frac{m}{2}}} \left( \int_{\mathbb{S}^{m-1}} exp(\langle \underline{\omega}, \underline{x} \rangle \underline{\omega} \partial_{x_0}) dS_{\underline{\omega}}\right) f_0(x_0).$$
%	where $ \Omega \subset \mathbb{R}^{m+1}$ an axially symmetric slice neighbourhood of $\Omega_1$. 
\end{thm}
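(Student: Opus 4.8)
The plan is to start from the closed form of the monogenic generalized CK-extension in \eqref{Exx} (equivalently, the Bessel series appearing in \eqref{star1}) and to recover it by expanding the plane-wave exponential and integrating over $\mathbb{S}^{m-1}$ termwise. First I would expand
$$\exp\left(\langle\underline{\omega},\underline{x}\rangle\underline{\omega}\partial_{x_0}\right) = \sum_{k=0}^\infty \frac{\langle\underline{\omega},\underline{x}\rangle^k\,\underline{\omega}^k}{k!}\,\partial_{x_0}^k,$$
which is legitimate because $\langle\underline{\omega},\underline{x}\rangle$ and $\partial_{x_0}$ are scalar and commute with the $1$-vector $\underline{\omega}$. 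Using $\underline{\omega}^2=-|\underline{\omega}|^2=-1$ on $\mathbb{S}^{m-1}$ one has $\underline{\omega}^{2p}=(-1)^p$ and $\underline{\omega}^{2p+1}=(-1)^p\underline{\omega}$, so the exponential splits into a scalar series in $\langle\underline{\omega},\underline{x}\rangle^{2p}\partial_{x_0}^{2p}$ and a $1$-vector series in $\underline{\omega}\langle\underline{\omega},\underline{x}\rangle^{2p+1}\partial_{x_0}^{2p+1}$.

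The key computational step is the evaluation of the two families of spherical integrals. Since $\partial_{x_0}$ commutes with integration in $\underline{\omega}$, I would treat it as a parameter and compute
$$\int_{\mathbb{S}^{m-1}}\langle\underline{\omega},\underline{x}\rangle^{2p}\,dS_{\underline{\omega}} = |\underline{x}|^{2p}\,\frac{2\pi^{m/2}(2p)!}{2^{2p}\,p!\,\Gamma\left(\tfrac{m}{2}+p\right)}, \qquad \int_{\mathbb{S}^{m-1}}\underline{\omega}\,\langle\underline{\omega},\underline{x}\rangle^{2p+1}\,dS_{\underline{\omega}} = \underline{x}\,|\underline{x}|^{2p}\,\frac{\pi^{m/2}(2p+1)!}{2^{2p}\,p!\,\Gamma\left(\tfrac{m}{2}+p+1\right)}.$$
These follow from the standard monomial integral $\int_{\mathbb{S}^{m-1}}\omega_1^{2a}\,dS = 2\pi^{(m-1)/2}\Gamma(a+\tfrac12)/\Gamma(a+\tfrac{m}{2})$ together with the rotation invariance of surface measure, which lets me reduce to $\underline{x}=|\underline{x}|e_1$, and the duplication identities $\Gamma(p+\tfrac12)=(2p)!\sqrt{\pi}/(4^p p!)$ and $\Gamma(p+\tfrac32)=(2p+1)!\sqrt{\pi}/(2\cdot 4^p p!)$. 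The second integral is a scalar multiple of $\underline{x}$ precisely because every component of $\underline{\omega}$ orthogonal to $\underline{x}$ integrates to zero by parity.

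Substituting these integrals, the factorials $(2p)!$ and $(2p+1)!$ cancel against the $1/k!$ from the exponential. After multiplying by $\Gamma(\tfrac{m}{2})/(2\pi^{m/2})$, the scalar part collapses to $\Gamma(\tfrac{m}{2})\sum_p \frac{(-1)^p|\underline{x}|^{2p}\partial_{x_0}^{2p}}{2^{2p}p!\,\Gamma(\frac{m}{2}+p)}[f_0]$ and the $1$-vector part to $\frac{\Gamma(m/2)}{2}\,\underline{x}\sum_p \frac{(-1)^p|\underline{x}|^{2p}\partial_{x_0}^{2p+1}}{2^{2p}p!\,\Gamma(\frac{m}{2}+p+1)}[f_0]$. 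Recognizing these as the expansions of $(z/2)^{-\nu}J_\nu(z)$ with $\nu=\tfrac{m}{2}-1$ and $\nu=\tfrac{m}{2}$ evaluated at $z=|\underline{x}|\partial_{x_0}$, the two pieces are exactly the scalar and $1$-vector summands of \eqref{Exx}, which establishes the identity.

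The main obstacle I anticipate is not any single integral but the justification of the termwise integration: I must argue that the operator series acting on $f_0$ converges uniformly on compact sets so that interchanging $\sum_k$ with $\int_{\mathbb{S}^{m-1}}$ is valid. This is handled exactly as in the convergence part of Theorem \ref{GCK1}: using the real-analyticity estimate $|\partial_{x_0}^k[f_0](x_0)|\le C_K\,k!\,\lambda_K^k$ on a compact $K$, the moduli of the integrands are dominated by a convergent numerical series whenever $|\underline{x}|<1/\lambda_K$, so dominated convergence legitimizes the interchange and renders the formal manipulations rigorous on an axially symmetric neighbourhood of $\Omega_1$.
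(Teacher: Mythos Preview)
Your proof is correct. Note, however, that the paper does not actually prove this theorem: it is quoted from \cite{DESS} as a known result, with no argument given. So there is no ``paper's proof'' to compare against directly.

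That said, your approach mirrors in reverse the paper's proof of the harmonic analogue, Theorem~\ref{plane}. There the authors start from the Bessel-series form of the extension and use the Funk--Hecke theorem (Theorem~\ref{FH}, yielding identities \eqref{P1} and \eqref{P2}) to rewrite $\underline{x}^{2j}$ and $\underline{x}^{2j+1}$ as spherical integrals of $\langle\underline{x},\underline{\omega}\rangle^{2j}$ and $\langle\underline{x},\underline{\omega}\rangle^{2j+1}\underline{\omega}$, then recognize the resulting series as $\cosh$ and $\sinh$. You instead expand the exponential first and evaluate the moments $\int_{\mathbb{S}^{m-1}}\langle\underline{\omega},\underline{x}\rangle^{2p}\,dS_{\underline{\omega}}$ and $\int_{\mathbb{S}^{m-1}}\underline{\omega}\langle\underline{\omega},\underline{x}\rangle^{2p+1}\,dS_{\underline{\omega}}$ by rotating $\underline{x}$ to $|\underline{x}|e_1$ and appealing to the one-variable Gamma integral and the duplication formula. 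Your route is slightly more elementary in that it avoids stating Funk--Hecke as a separate tool, but the two computations are equivalent: your moment formulas are exactly the $k=0$ and $k=1$ cases of Funk--Hecke, and the resulting series match the scalar and $1$-vector parts of \eqref{Exx} on the nose. Your convergence justification via the real-analyticity bound $|\partial_{x_0}^k f_0|\le C_K\,k!\,\lambda_K^k$ is also the standard one and suffices to legitimise the termwise integration.
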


The aim of this section is to get a plane waves decomposition of the harmonic generalized CK-extension. We shall make use of the Funk-Hecke theorem. In particular, we need the following particular case, see \cite{Ho}. 
\begin{thm}[Funk-Hecke]
	\label{FH}
	Let $k \in \mathbb{N}_0$. We assume that $P_k$ is a homogeneous polynomial of degree $k$ then we have
	$$ \int_{\mathbb{S}^{m-1}} \langle \underline{x}, \underline{\omega} \rangle^j P_k(\underline{\omega}) dS_{\underline{\omega}}= \frac{j!}{(j- k)!} \frac{2 \pi^{\frac{m-1}{2}}}{2^k} \frac{\Gamma \left(\frac{j-k+1}{2}\right)}{\Gamma \left( \frac{m+j+k}{2}\right)} | \underline{x}|^{j-k} P_k(\underline{x}).$$
\end{thm}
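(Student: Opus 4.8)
The plan is to recognize the identity as the classical Funk--Hecke formula specialized to the function $t\mapsto t^{j}$, and to derive it by reducing the spherical integral to a one--dimensional Gegenbauer moment. First I would write $\underline{x}=|\underline{x}|\,\underline{\xi}$ with $\underline{\xi}\in\mathbb{S}^{m-1}$, so that $\langle\underline{x},\underline{\omega}\rangle=|\underline{x}|\,\langle\underline{\xi},\underline{\omega}\rangle$ and homogeneity of the integrand in $\underline{x}$ lets the factor $|\underline{x}|^{j}$ come out; it then suffices to evaluate $\int_{\mathbb{S}^{m-1}}\langle\underline{\xi},\underline{\omega}\rangle^{j}P_{k}(\underline{\omega})\,dS_{\underline{\omega}}$ for $\underline{\xi}\in\mathbb{S}^{m-1}$. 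Here I use that $P_{k}$ is a solid harmonic, so that its restriction to $\mathbb{S}^{m-1}$ is a spherical harmonic of degree $k$; this is the setting in which the stated identity holds and the one needed in the applications. The general Funk--Hecke theorem then gives $\int_{\mathbb{S}^{m-1}}\langle\underline{\xi},\underline{\omega}\rangle^{j}P_{k}(\underline{\omega})\,dS_{\underline{\omega}}=\lambda_{j,k}\,P_{k}(\underline{\xi})$, and since $P_{k}(\underline{\xi})=|\underline{x}|^{-k}P_{k}(\underline{x})$ by homogeneity, this already produces the asserted shape $\lambda_{j,k}\,|\underline{x}|^{j-k}P_{k}(\underline{x})$.

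It remains to identify the scalar $\lambda_{j,k}$, which is where the real work lies. The Funk--Hecke theorem expresses it as
\[
\lambda_{j,k}=\frac{2\pi^{\frac{m-1}{2}}}{\Gamma\!\left(\frac{m-1}{2}\right)}\,\frac{1}{C_{k}^{\nu}(1)}\int_{-1}^{1}t^{j}\,C_{k}^{\nu}(t)\,(1-t^{2})^{\nu-\frac12}\,dt,\qquad \nu=\frac{m}{2}-1,
\]
where $C_{k}^{\nu}$ denotes the Gegenbauer polynomial and $\tfrac{2\pi^{(m-1)/2}}{\Gamma((m-1)/2)}=|\mathbb{S}^{m-2}|$. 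A parity argument shows this vanishes unless $j-k$ is even, consistent with the fact that the left--hand side must be a genuine polynomial in $\underline{x}$; for $j-k$ even I would insert the classical evaluation of the Gegenbauer moment $\int_{-1}^{1}t^{j}C_{k}^{\nu}(t)(1-t^{2})^{\nu-1/2}\,dt$ as a ratio of Gamma functions, together with the value $C_{k}^{\nu}(1)=\Gamma(2\nu+k)/(k!\,\Gamma(2\nu))$.

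The main obstacle is the bookkeeping in this last step: one must simplify the product of $|\mathbb{S}^{m-2}|$, $1/C_{k}^{\nu}(1)$ and the Gegenbauer moment into the compact form $\frac{j!}{(j-k)!}\,\frac{2\pi^{(m-1)/2}}{2^{k}}\,\frac{\Gamma(\frac{j-k+1}{2})}{\Gamma(\frac{m+j+k}{2})}$. The decisive tool is the Legendre duplication formula $\Gamma(z)\Gamma(z+\tfrac12)=2^{1-2z}\sqrt{\pi}\,\Gamma(2z)$, which converts the integer factorials $j!$, $(j-k)!$ and the power $2^{k}$ appearing in the target into the shifted Gamma values $\Gamma(\frac{j-k+1}{2})$ and $\Gamma(\frac{m+j+k}{2})$ coming from the moment integral; matching the two sides requires careful tracking of the $\nu$--dependence and of the parity constraint.

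As a more self--contained alternative I would avoid Gegenbauer polynomials and argue by $SO(m)$--equivariance. The integral $I(\underline{x})=\int_{\mathbb{S}^{m-1}}\langle\underline{x},\underline{\omega}\rangle^{j}P_{k}(\underline{\omega})\,dS_{\underline{\omega}}$ is a homogeneous polynomial of degree $j$ in $\underline{x}$ that transforms under $SO(m)$ exactly as $P_{k}$ does; since the type-$[k]$ isotypic component of the degree-$j$ polynomials equals $\mathbb{R}\,|\underline{x}|^{j-k}P_{k}(\underline{x})$ (and is empty when $j-k$ is odd), Schur's lemma forces $I(\underline{x})=c\,|\underline{x}|^{j-k}P_{k}(\underline{x})$. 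The constant $c$ is then pinned down by applying the constant--coefficient operator $P_{k}(\partial_{\underline{x}})$ to both sides, using $P_{k}(\partial_{\underline{x}})\langle\underline{x},\underline{\omega}\rangle^{j}=\frac{j!}{(j-k)!}\langle\underline{x},\underline{\omega}\rangle^{j-k}P_{k}(\underline{\omega})$ on the left together with a standard identity for $P_{k}(\partial_{\underline{x}})[\,|\underline{x}|^{j-k}P_{k}(\underline{x})\,]$ on the right; this reduces the computation to the base case $j=k$ and a short recursion in $(j-k)/2$.
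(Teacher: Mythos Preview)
The paper does not prove this statement at all: it is quoted as a known particular case of the Funk--Hecke theorem, with a reference to Hochstadt's book, and is then used as a black box in the proof of the plane-wave decomposition. So there is no argument in the paper to compare your sketch to.

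Your proposal is a correct outline of how one actually derives this special case. Two remarks. First, you rightly flag that the hypothesis ``$P_k$ is a homogeneous polynomial of degree $k$'' is too weak for the identity as written; the Funk--Hecke mechanism needs $P_k$ to restrict to a spherical harmonic of degree $k$, i.e.\ $P_k$ should be a solid harmonic. The paper only ever applies the result with $k=0$ ($P_0=1$) and $k=1$ ($P_1(\underline{\omega})=\underline{\omega}$), both of which are harmonic, so this sloppiness in the statement is harmless for its intended use, and your proviso is exactly the right one. Second, both of your routes---the direct evaluation of the Gegenbauer moment followed by duplication-formula bookkeeping, and the $SO(m)$-equivariance plus Schur's lemma argument fixing the constant via $P_k(\partial_{\underline{x}})$---are standard and would go through; the first is closer in spirit to the classical references the paper defers to, while the second is more conceptual and avoids special-function identities at the cost of a small recursion. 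Either would constitute a genuine proof where the paper offers none.
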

\begin{thm}
\label{plane}
	Let $ f(x_0, \underline{x})= \sum_{j=0}^\infty \underline{x}^j A_j(x_0)$ satisfying the same conditions of Theorem \ref{GCK}. Then $f(x_0, \underline{x})$ can be decomposed into plane waves as follows
	$$ f(x_0, \underline{x})= \frac{\Gamma\left(\frac{m}{2}\right)}{2 \pi^{\frac{m}{2}}} \int_{\mathbb{S}^{m-1}}\left[\cosh\left(\langle \underline{x}, \underline{\omega} \rangle \underline{\omega} \partial_{x_0}\right)[A_0](x_0)+m \sinh \left(\langle \underline{x}, \underline{\omega} \rangle \underline{\omega} \partial_{x_0}\right)[A_1](x_0)\right] dS_{\underline{\omega}}.$$ 
\end{thm}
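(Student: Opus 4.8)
The plan is to derive the plane-wave formula from the decomposition already established in Proposition \ref{split}, combined with the plane-wave representation of the \emph{monogenic} generalized CK-extension recalled just above this statement. By \eqref{H1} we have $HGCK[A_0,A_1] = [GCK[A_0]]_0 + m\,[GCK[\mathcal{A}_1]]_1$, where $\mathcal{A}_1$ is a primitive of $A_1$, so it suffices to read off the scalar and $1$-vector grades of the operator $\exp(\langle\underline{x},\underline{\omega}\rangle\underline{\omega}\partial_{x_0})$ appearing in the integral representation of $GCK$.

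First I would split the plane-wave exponential by parity. Writing $a := \langle\underline{x},\underline{\omega}\rangle\partial_{x_0}$ for the scalar operator and using $\underline{\omega}^2=-1$ for $\underline{\omega}\in\mathbb{S}^{m-1}$, one checks $(\langle\underline{x},\underline{\omega}\rangle\underline{\omega}\partial_{x_0})^{2n} = (-1)^n a^{2n}$ (scalar) and $(\langle\underline{x},\underline{\omega}\rangle\underline{\omega}\partial_{x_0})^{2n+1} = (-1)^n a^{2n+1}\underline{\omega}$ ($1$-vector). Consequently the even part of the exponential is exactly $\cosh(\langle\underline{x},\underline{\omega}\rangle\underline{\omega}\partial_{x_0})$ and is scalar-valued, whereas the odd part is $\sinh(\langle\underline{x},\underline{\omega}\rangle\underline{\omega}\partial_{x_0})$ and is $1$-vector-valued. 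Applying the grade projections $[\cdot]_0$ and $[\cdot]_1$ to the representation $GCK[f_0] = \tfrac{\Gamma(m/2)}{2\pi^{m/2}}\int_{\mathbb{S}^{m-1}}\exp(\langle\underline{x},\underline{\omega}\rangle\underline{\omega}\partial_{x_0})[f_0]\,dS_{\underline{\omega}}$ therefore isolates the $\cosh$ integral (with $f_0=A_0$) and the $\sinh$ integral (with $f_0=\mathcal{A}_1$), and substituting into \eqref{H1} yields the claimed formula.

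As a self-contained alternative I would instead expand $\cosh$ and $\sinh$ as power series in $\langle\underline{x},\underline{\omega}\rangle$ and integrate term-by-term with the Funk--Hecke formula of Theorem \ref{FH}: the $\cosh$ monomials use $k=0$, $P_0\equiv 1$, and the $\sinh$ monomials use $k=1$, $P_1(\underline{\omega})=\underline{\omega}$. After simplifying the resulting Gamma quotients with the Legendre duplication identity $\Gamma(n+\tfrac12)=\tfrac{(2n)!}{4^n n!}\sqrt{\pi}$ and converting $|\underline{x}|^{2n}$ and $|\underline{x}|^{2n}\underline{x}$ back into $\underline{x}^{2n}=(-1)^n|\underline{x}|^{2n}$ and $\underline{x}^{2n+1}=(-1)^n|\underline{x}|^{2n}\underline{x}$, the even integral reproduces the coefficients $A_{2j}$ of \eqref{even0} and the odd integral reproduces the $A_{2j+1}$ of \eqref{even01}.

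The step I expect to be most delicate is the derivative bookkeeping in the odd part: since $\sinh$ contributes only odd powers $\partial_{x_0}^{2n+1}$, it must act on the primitive $\mathcal{A}_1$ (so that $\partial_{x_0}^{2n+1}\mathcal{A}_1=\partial_{x_0}^{2n}A_1$) in order to match the $\partial_{x_0}^{2j}A_1$ occurring in $A_{2j+1}$; the prefactor $m$ is then reconciled via $\tfrac{m}{2}\Gamma(\tfrac m2)=\Gamma(\tfrac m2+1)$. The only remaining technical obligation is to justify interchanging the summation with the spherical integral, which follows from the local uniform convergence established in the proof of Theorem \ref{GCK} (valid on compacts where $|\underline{x}|<1/\lambda_K$).
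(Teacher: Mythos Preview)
Your alternative route—expanding $\cosh$ and $\sinh$ termwise and integrating each monomial $\langle\underline{x},\underline{\omega}\rangle^{n}$ over $\mathbb{S}^{m-1}$ via Funk--Hecke with $k=0,1$—is precisely the paper's argument: the authors start from the Bessel series \eqref{star1}, convert each $\underline{x}^{2j}$ and $\underline{x}^{2j+1}$ into the spherical integrals \eqref{P1}--\eqref{P2}, simplify the Gamma factors with the duplication identities you name, and then recognise the resulting operator series as the $\cosh$ and $\sinh$ expressions \eqref{P3}--\eqref{P4}.

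Your primary route through Proposition~\ref{split} and the already-established plane-wave formula for the monogenic $GCK$ is genuinely different and more conceptual: rather than recomputing the Funk--Hecke integrals from scratch you inherit them from the monogenic case, and only need the parity/grade splitting of $\exp(\langle\underline{x},\underline{\omega}\rangle\underline{\omega}\partial_{x_0})$ into its scalar ($\cosh$) and $1$-vector ($\sinh$) parts; this bypasses all the Gamma bookkeeping. The delicate point is exactly the one you flag: this route lands naturally on $\sinh(\langle\underline{x},\underline{\omega}\rangle\underline{\omega}\partial_{x_0})[\mathcal{A}_1]$, i.e.\ on $\sum_{j\ge 0} (-1)^j\langle\underline{x},\underline{\omega}\rangle^{2j+1}\underline{\omega}\,\partial_{x_0}^{2j}[A_1]/(2j+1)!$, which is also what the paper's own computation produces in the line just before \eqref{P4}. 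The paper then identifies this with $\sinh(\cdots)[A_1]$, but the right-hand side of \eqref{P4} carries $\partial_{x_0}^{2j+1}[A_1]$, which does not match that line; your bookkeeping is the correct one, and the second term in the statement should really be read with the primitive $\mathcal{A}_1$ (equivalently, with the operator series you wrote down).
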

\begin{proof}
	Applying the Funk-Hecke theorem (see Theorem \ref{FH}) with $k=0$ and $k=1$ respectively, we obtain the following equalities
	\begin{equation}
		\label{P1}
		\underline{x}^{2j}=(-1)^j | \underline{x}|^{2j}= \frac{(-1)^j \Gamma\left(\frac{m}{2}+j\right)}{2 \pi^{\frac{m-1}{2}} \Gamma\left( j+ \frac{1}{2}\right)} \int_{\mathbb{S}^{m-1}} \langle \underline{x}, \underline{\omega} \rangle^{2j} dS_{\underline{\omega}},
	\end{equation}
	\begin{equation}
		\label{P2}
		\underline{x}^{2j+1}=(-1)^j \underline{x}| \underline{x}|^{2j}= \frac{(-1)^j \Gamma\left(\frac{m}{2}+j+1\right)}{(2j+1)\pi^{\frac{m-1}{2}} \Gamma\left( j+ \frac{1}{2}\right)} \int_{\mathbb{S}^{m-1}} \langle \underline{x}, \underline{\omega} \rangle^{2j+1} \underline{\omega} dS_{\underline{\omega}}.
	\end{equation}
	Now by Theorem \ref{GCK}, \eqref{P1}, \eqref{P2} and using the expression of the Bessel functions we get
	\begin{eqnarray*}
		f(x_0, \underline{x})
		&=& \Gamma \left(\frac{m}{2}\right) \left[ \sum_{j=0}^\infty \frac{\underline{x}^{2j} \partial_{x_0}^{2j}}{2^{2j} j! \Gamma \left(\frac{m}{2}+j\right)}[A_0](x_0)+ \underline{x} \frac{m}{2}\sum_{j=0}^\infty \frac{\underline{x}^{2j} \partial_{x_0}^{2j}}{2^{2j} j! \Gamma \left(\frac{m}{2}+j+1\right)}[A_1](x_0)\right]\\
		&=& \Gamma \left(\frac{m}{2}\right) \biggl[\sum_{j=0}^\infty \frac{(-1)^j \partial_{x_0}^{2j} [A_0](x_0)}{2^{2j+1} j! \pi^{\frac{m-1}{2}} \Gamma \left( j+ \frac{1}{2}\right)} \int_{\mathbb{S}^{m-1}} \langle \underline{x}, \underline{\omega} \rangle^{2j} dS_{\underline{\omega}}+ \\
		&&+ \frac{m}{2} \sum_{j=0}^\infty \frac{(-1)^j \partial_{x_0}^{2j}[A_1](x_0)}{2^{2j} j!(2j+1) \pi^{\frac{m-1}{2}} \Gamma \left(j+ \frac{1}{2}\right)} \int_{\mathbb{S}^{m-1}} \langle \underline{x}, \underline{\omega} \rangle^{2j+1} \underline{\omega} dS_{\underline{\omega}} \biggl].
	\end{eqnarray*}	
	Using the identities $ 2 \Gamma\left( j+ \frac{3}{2}\right)= (2j+1) \Gamma \left(j+ \frac{1}{2}\right)$, $ 2^{2j} j! \Gamma \left(j+ \frac{1}{2}\right)= \pi^{\frac{1}{2}} (2j)!$ and $ 2^{2j+1} j! \Gamma \left(j+ \frac{3}{2}\right)= \pi^{\frac{1}{2}} (2j+1)!$ we get
	\begin{eqnarray*}
		f(x_0, \underline{x})&=& c_m \int_{\mathbb{S}^{m-1}} \left( \sum_{j=0}^\infty \frac{(-1)^j \langle \underline{x}, \underline{\omega} \rangle^{2j} \partial_{x_0}^{2j} [A_0](x_0)}{2^{2j} j! \Gamma \left(j+ \frac{1}{2}\right)}+ m\sum_{j=0}^\infty \frac{(-1)^j \langle \underline{x}, \underline{\omega} \rangle^{2j+1} \underline{\omega} \partial_{x_0}^{2j} [A_1](x_0)}{2^{2j+1} j! \Gamma \left(j+ \frac{3}{2}\right)} \right)dS_{\underline{\omega}}\\
		&=& c_m\int_{\mathbb{S}^{m-1}} \left( \sum_{j=0}^\infty \frac{(-1)^j \langle \underline{x}, \underline{\omega} \rangle^{2j} \partial_{x_0}^{2j} [A_0](x_0)}{(2j)!}+ m\sum_{j=0}^\infty \frac{(-1)^j \langle \underline{x}, \underline{\omega} \rangle^{2j+1} \underline{\omega} \partial_{x_0}^{2j} [A_1](x_0)}{(2j+1)!} \right) dS_{\underline{\omega}},
	\end{eqnarray*}
	where $c_m:=\frac{\Gamma \left(\frac{m}{2}\right)}{2 \pi^{\frac{m}{2}}} $.
	By using the fact that $ \underline{\omega}^{2j}=(-1)^j$, the above  power series of the differential operator $ \partial_{x_0}^2$ can be written as
	\begin{equation}
		\label{P3}
		\cosh (\langle  \underline{x}, \underline{\omega}\rangle \underline{\omega} \partial_{x_0} )[A_0](x_0)= \sum_{j=0}^\infty \frac{(-1)^j \langle \underline{x}, \underline{\omega} \rangle^{2j} \partial_{x_0}^{2j}[A_0](x_0)}{(2j)!},
	\end{equation}
	and
	\begin{equation}
		\label{P4}
		\sinh(\langle  \underline{x}, \underline{\omega}\rangle \underline{\omega} \partial_{x_0})[A_1](x_0)=\sum_{j=0}^\infty \frac{(-1)^j \langle \underline{x}, \underline{\omega} \rangle^{2j+1} \underline{\omega} \partial_{x_0}^{2j+1} [A_1](x_0)}{(2j+1)!},
	\end{equation}
	as power series on the variables $ \omega_1$,..., $ \omega_m$, where $x_0$ and $ \underline{x}$ are considered as parameters.
	Thus by \eqref{P3} and \eqref{P4} we get
	$$
	f(x_0, \underline{x}) 
	=\frac{\Gamma\left(\frac{m}{2}\right)}{2 \pi^{\frac{m}{2}}} \int_{\mathbb{S}^{m-1}}\left[\cosh\left(\langle \underline{x}, \underline{\omega} \rangle \underline{\omega} \partial_{x_0}[A_0](x_0)\right)+m \sinh \left(\langle \underline{x}, \underline{\omega} \rangle \underline{\omega} \partial_{x_0}\right)[A_1](x_0)\right] dS_{\underline{\omega}}.
	$$

\end{proof}

\section{Connection between the Fueter-Sce's theorem and the generalized CK-extension for axially harmonic functions }	

The purpose of this section is to complete diagram \eqref{Diag2bis}. In the previous section, we derived the central row of \eqref{Diag2bis}. To understand the remaining arrows, we need to explore the relationship between the Fueter-Sce theorem and the generalized CK-extension for axially harmonic functions. To proceed, we first review the action of the Laplace operator in $m+1$ variables on a real-valued harmonic function (see \cite{D}).

\begin{lemma}
\label{R1}
Let $h(x_0,r)$ be an $ \mathbb{R}$-valued harmonic function, i.e. $(\partial_{x_0}^2+ \partial_r^2)h(x_0,r)=0$ harmonic function. If we extend $h$ to $\mathbb{R}^{m+1}$ by taking $r=| \underline{x}|$ we obtain
$$ \Delta_{\mathbb{R}^{m+1}}^j h(x_0,r)= \prod_{\ell=1}^j(m-2 \ell+1) \left( \frac{1}{r} \partial_r\right)^j h(x_0,r)$$
and
$$ \Delta_{\mathbb{R}^{m+1}}^j \underline{\omega} h(x_0,r)= \prod_{\ell=1}^j(m-2 \ell+1) \left(  \partial_r\frac{1}{r}\right)^j \underline{\omega}h(x_0,r) ,$$
\end{lemma}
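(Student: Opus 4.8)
The plan is to reduce everything to two operator identities and then run a single induction on $j$. Throughout I work with the local notation $L := \partial_{x_0}^2 + \partial_r^2$ (the flat two-dimensional operator in the variables $x_0,r$), $T := \frac{1}{r}\partial_r$, and $\tilde T := \partial_r\frac{1}{r}$. The starting point is Lemma~\ref{Newt}: applied to a purely radial scalar $g(x_0,r)$ (take $\beta=0$) it gives $\Delta_{\mathbb{R}^{m+1}}\,g = \partial_{x_0}^2 g + \partial_r^2 g + \frac{m-1}{r}\partial_r g = Lg + (m-1)Tg$, while applied to $\underline{\omega}\,g$ (take $\alpha=0$) it gives $\Delta_{\mathbb{R}^{m+1}}(\underline{\omega}\,g) = \underline{\omega}\bigl[\partial_{x_0}^2 g + \partial_r^2 g + (m-1)\partial_r(\tfrac{g}{r})\bigr] = \underline{\omega}\bigl[Lg + (m-1)\tilde T g\bigr]$. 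Here $\underline{\omega}$ is treated as the fixed angular factor, so that $(\partial_r\frac1r)^j\underline{\omega}h$ means $\underline{\omega}\,\tilde T^{j}h$; the presence of $\tilde T$ (rather than $T$) in the vector case is exactly the bookkeeping produced by the $\underline{x}$-dependence of $\underline{\omega}$.

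The key computation is a pair of commutation relations. Since $\partial_{x_0}^2$ commutes with both $T$ and $\tilde T$, it suffices to evaluate $[\partial_r^2,T]$ and $[\partial_r^2,\tilde T]$ on a test function; a short direct calculation gives in both cases
\[
[L,T] = -2T^2, \qquad [L,\tilde T] = -2\tilde T^2 .
\]
From $LT = TL - 2T^2$ one gets, for every $k\ge 1$, the operator recursion $LT^{k} = T\,(LT^{k-1}) - 2T^{k+1}$, and likewise for $\tilde T$. I now claim the central lemma: \emph{if $Lh=0$, then $LT^{k}h = -2k\,T^{k+1}h$ and $L\tilde T^{k}h = -2k\,\tilde T^{k+1}h$.} This follows by induction on $k$: the case $k=0$ is the hypothesis $Lh=0$, and if $LT^{k-1}h = -2(k-1)T^{k}h$ then the recursion gives $LT^{k}h = T\bigl(-2(k-1)T^{k}h\bigr) - 2T^{k+1}h = -2k\,T^{k+1}h$, with the argument for $\tilde T$ being identical. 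The point worth emphasizing — and the only genuine subtlety — is that $T^{k}h$ is \textbf{not} two-dimensional harmonic for $k\ge 1$, so one cannot simply re-apply the base case; the commutation relation is precisely what replaces harmonicity and keeps each application of $L$ proportional to a single power $T^{k+1}h$.

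Combining the two ingredients yields the single-step rules $\Delta_{\mathbb{R}^{m+1}}(T^{k}h) = LT^{k}h + (m-1)T^{k+1}h = (m-2k-1)\,T^{k+1}h$ and, in the vector case, $\Delta_{\mathbb{R}^{m+1}}(\underline{\omega}\,\tilde T^{k}h) = (m-2k-1)\,\underline{\omega}\,\tilde T^{k+1}h$. Writing $c_j := \prod_{\ell=1}^{j}(m-2\ell+1)$, the two formulas in the statement then follow by induction on $j$: for $j=1$ we have $\Delta_{\mathbb{R}^{m+1}}h = (m-1)Th = c_1 Th$, and assuming $\Delta_{\mathbb{R}^{m+1}}^{\,j-1}h = c_{j-1}T^{j-1}h$ we apply the single-step rule with $k=j-1$ to get $\Delta_{\mathbb{R}^{m+1}}^{\,j}h = c_{j-1}(m-2j+1)T^{j}h = c_{j}\,T^{j}h = \prod_{\ell=1}^{j}(m-2\ell+1)\left(\tfrac1r\partial_r\right)^{j}h$; the vector identity is obtained the same way with $T$ replaced by $\tilde T$ and the fixed factor $\underline{\omega}$ carried through. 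I expect the only delicate points to be the correct derivation of the two base expressions from Lemma~\ref{Newt} (in particular the $(m-1)$ coefficient of the radial first-order term) and the verification of the commutators; everything after that is a mechanical induction.
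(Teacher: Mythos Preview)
Your argument is correct. The paper does not actually prove Lemma~\ref{R1}; it simply quotes the result from reference~[D] (Pe\~{n}a Pe\~{n}a's thesis), so there is no in-paper proof to compare against. Your route via the commutation relations $[L,T]=-2T^{2}$ and $[L,\tilde T]=-2\tilde T^{2}$, together with the radial expressions for $\Delta_{\mathbb{R}^{m+1}}$ on scalars and on $\underline{\omega}$-vectors coming from Lemma~\ref{Newt}, is the standard and cleanest way to obtain these formulas; the induction you run is exactly the one used in the literature.

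Two minor remarks. First, the displayed formula~\eqref{lapp} in the paper carries a coefficient $\tfrac{2}{r}$ in front of $\partial_r\alpha$ which is evidently a typo for $\tfrac{m-1}{r}$; your version $\Delta_{\mathbb{R}^{m+1}}g = Lg + (m-1)Tg$ is the correct one and is what the rest of the paper implicitly uses. Second, in your inductive step it is worth stating once that $T^{k}h$ and $\tilde T^{k}h$ remain smooth functions of $(x_0,r)$ (indeed real-analytic where $h$ is), so that Lemma~\ref{Newt} continues to apply at each stage; you effectively use this but do not say it.
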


Another important tool that we will need in the sequel will be the following generalization of the classic Cauchy-Riemann system, see \cite{S, CSS2}. 

\begin{lemma}
\label{R2}
Let $f(z)= \alpha(u,v)+i \beta(u,v)$, with $z=u+iv$, be an intrinsic holomorphic function and consider
$$ \alpha_j:=\left( \frac{1}{r} \partial_r\right)^j \alpha(x_0,r) \quad  \hbox{and} \quad \beta_j:=\left(  \partial_r\frac{1}{r}\right)^j \beta(x_0,r),$$
then
$$ \begin{cases}
\partial_{x_0} \alpha_j= \partial_r \beta_j+ \frac{2 j}{r} \beta_j\\
\partial_r \alpha_j= - \partial_{x_0} \beta_j.
\end{cases}
$$ 
\end{lemma}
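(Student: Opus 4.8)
The plan is to proceed by induction on $j$. The crucial structural observation is that $\partial_{x_0}$ commutes with each of the purely radial operators $\frac{1}{r}\partial_r$ and $\partial_r\frac{1}{r}$, so that differentiating the recursive definitions $\alpha_{j+1}=\frac{1}{r}\partial_r\,\alpha_j$ and $\beta_{j+1}=\partial_r\frac{1}{r}\,\beta_j$ with respect to $x_0$ reduces everything to radial computations on $\alpha_j$ and $\beta_j$.

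For the base case $j=0$ I would note that $\alpha_0=\alpha$ and $\beta_0=\beta$, and that the two claimed identities reduce to the classical Cauchy-Riemann system for the intrinsic holomorphic function $f=\alpha+i\beta$ after the substitution $u\mapsto x_0$, $v\mapsto r$, namely $\partial_{x_0}\alpha=\partial_r\beta$ and $\partial_r\alpha=-\partial_{x_0}\beta$. The inhomogeneous weight term $\frac{2j}{r}\beta_j$ vanishes at $j=0$, so no additional input is needed here.

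For the inductive step, assuming both identities at level $j$, I would establish the first identity at level $j+1$ by computing
$$\partial_{x_0}\alpha_{j+1}=\frac{1}{r}\partial_r(\partial_{x_0}\alpha_j)=\frac{1}{r}\partial_r\left(\partial_r\beta_j+\frac{2j}{r}\beta_j\right),$$
expanding the right-hand side, and comparing it term by term with the expansion of $\partial_r\beta_{j+1}+\frac{2(j+1)}{r}\beta_{j+1}$ obtained from $\beta_{j+1}=\partial_r\bigl(\beta_j/r\bigr)$. Both expressions collapse to $\frac{1}{r}\partial_r^2\beta_j+\frac{2j}{r^2}\partial_r\beta_j-\frac{2j}{r^3}\beta_j$, and the matching of the coefficients of $\partial_r\beta_j/r^2$ and $\beta_j/r^3$ is precisely where the jump from weight $2j$ to weight $2(j+1)$ is absorbed. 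The second identity at level $j+1$ is more direct: from $\beta_{j+1}=\partial_r\bigl(\beta_j/r\bigr)$, commuting $\partial_{x_0}$ inside, and applying the inductive hypothesis $\partial_{x_0}\beta_j=-\partial_r\alpha_j$, one gets $-\partial_{x_0}\beta_{j+1}=\partial_r\bigl(\partial_r\alpha_j/r\bigr)=\partial_r\alpha_{j+1}$.

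The routine differentiations are harmless; the one place demanding care is the algebraic bookkeeping in the first identity, where the inhomogeneous weight term $\frac{2j}{r}\beta_j$ must recombine with the radial derivatives to reproduce exactly the weight $2(j+1)$ at the next level. Tracking the coefficients of the three monomials $\partial_r^2\beta_j/r$, $\partial_r\beta_j/r^2$, $\beta_j/r^3$ on both sides is the heart of the argument, and I expect this to be the main, though entirely elementary, obstacle.
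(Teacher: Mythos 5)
Your induction is correct: the base case is exactly the Cauchy--Riemann system \eqref{CR1} with $(u,v)=(x_0,r)$, the commutation of $\partial_{x_0}$ with the radial operators is legitimate, and the identity
$\partial_r\beta_{j+1}+\tfrac{2(j+1)}{r}\beta_{j+1}=\tfrac{1}{r}\partial_r^2\beta_j+\tfrac{2j}{r^2}\partial_r\beta_j-\tfrac{2j}{r^3}\beta_j$
checks out, as does the second identity via $\beta_{j+1}=\partial_r(\beta_j/r)$. Note that the paper does not prove Lemma \ref{R2} at all --- it is quoted from \cite{S, CSS2} --- so your self-contained inductive argument is a valid (and the standard) way to supply the missing proof.
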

\begin{rem}
If we consider $j=0$ in Lemma \ref{R2} we get back the classical Cauchy-Riemann system, see \eqref{CR1}.
\end{rem}

The following result will be extremely useful to derive a connection between the Fueter-Sce's theorem and the generalized CK-extension for axially harmonic functions. 

\begin{lemma}
	\label{lappn}
Let $f(u+iv)= \alpha(u,v)+i \beta(u,v)$ be an intrinsic holomorphic function. Then for $ \ell \geq 0$ and $m$ being odd we have
\begin{equation}
\label{lapla1}
\Delta_{\mathbb{R}^{m+1}}^{\frac{m-2 \ell -1}{2}} f(x_0+ \underline{x})= 2^{\frac{m-2 \ell -1}{2}} \frac{\left( \frac{m-1}{2}\right)!}{\ell !} \left[ \left( \frac{1}{r} \partial_r\right)^{\frac{m-2 \ell -1}{2}} \alpha(x_0,r)+  \underline{\omega}\left(  \partial_r \frac{1}{r}\right)^{\frac{m-2 \ell -1}{2}} \beta(x_0,r)\right]
\end{equation} 
and
\begin{equation}
\label{lapla2}
\Delta_{\mathbb{R}^{m+1}}^{\frac{m-2 \ell -1}{2}} \mathcal{D} f(x_0+ \underline{x})=-2^{\frac{m-2 \ell +1}{2}} \frac{\left( \frac{m-1}{2}\right)!}{(\ell-1)!} \frac{1}{r} \left( \partial_r \frac{1}{r}\right)^{\frac{m-2 \ell -1}{2}} \beta(x_0, r).
\end{equation}
\end{lemma}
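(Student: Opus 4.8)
The strategy is to prove both identities by induction on the index using the two auxiliary lemmas that precede the statement, namely the iterated-Laplacian formula of Lemma~\ref{R1} and the generalized Cauchy--Riemann system of Lemma~\ref{R2}. I would first establish \eqref{lapla1} and then deduce \eqref{lapla2} from it by applying $\mathcal{D}$ and re-using Lemma~\ref{R2}.

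For \eqref{lapla1}, write $j:=\tfrac{m-2\ell-1}{2}$, which is a nonnegative integer since $m$ is odd and $\ell\ge 0$ (one should note the constraint $\ell\le\tfrac{m-1}{2}$ so that $j\ge0$). The scalar part $\alpha(x_0,r)$ and the $1$-vector part $\underline{\omega}\beta(x_0,r)$ are treated separately because the slice function $f(x_0+\underline{x})=\alpha+\underline{\omega}\beta$ splits under $\Delta_{\mathbb{R}^{m+1}}$. Since $\alpha$ and $\beta$ come from an intrinsic holomorphic function, the pair $(\alpha,\beta)$ satisfies $(\partial_{x_0}^2+\partial_r^2)\alpha=0$ and $(\partial_{x_0}^2+\partial_r^2)\beta=0$, so each is an $\mathbb{R}$-valued harmonic function of $(x_0,r)$ and Lemma~\ref{R1} applies directly: applying $\Delta_{\mathbb{R}^{m+1}}^{j}$ to $\alpha$ gives $\prod_{\ell'=1}^{j}(m-2\ell'+1)\,(\tfrac1r\partial_r)^{j}\alpha$, and applying it to $\underline{\omega}\beta$ gives $\prod_{\ell'=1}^{j}(m-2\ell'+1)\,(\partial_r\tfrac1r)^{j}\underline{\omega}\beta$. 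The only remaining task is to evaluate the product of constants. With $j=\tfrac{m-2\ell-1}{2}$ the factors $m-2\ell'+1$ range over the even integers from $m-1$ down to $2\ell+2$, so
\begin{equation*}
\prod_{\ell'=1}^{j}(m-2\ell'+1)=2^{j}\,\frac{\left(\frac{m-1}{2}\right)!}{\ell!},
\end{equation*}
which is exactly the prefactor in \eqref{lapla1}. Matching this constant is the one genuinely fiddly computation, but it is just a re-indexing of a product into a factorial ratio and is routine.

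For \eqref{lapla2}, the cleanest route is to apply $\Delta_{\mathbb{R}^{m+1}}^{j}$ to $\mathcal{D}f$ rather than $\mathcal{D}$ to the result of \eqref{lapla1}, treating $\mathcal{D}f$ as a new axial function. By Lemma~\ref{Newt} (formula \eqref{Dirac}) applied to $f=\alpha+\underline{\omega}\beta$, the scalar part of $\mathcal{D}f$ is $\partial_{x_0}\alpha-\partial_r\beta-\tfrac{m-1}{r}\beta$ and its $1$-vector part is $\partial_{x_0}\beta+\partial_r\alpha$; invoking the classical Cauchy--Riemann system \eqref{CR1} (the $j=0$ case of Lemma~\ref{R2}) collapses the $1$-vector part to zero and the scalar part to $-\tfrac{m-1}{r}\beta$, so $\mathcal{D}f=-\tfrac{m-1}{r}\beta$ is purely scalar. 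The subtlety is that $\tfrac1r\beta$ need not be harmonic in $(x_0,r)$, so Lemma~\ref{R1} cannot be applied verbatim; instead I would observe that $\tfrac1r\partial_r(\tfrac1r\beta)=(\partial_r\tfrac1r)(\tfrac1r\partial_r\beta)$-type intertwining lets one convert $(\tfrac1r\partial_r)^{j}$ acting on $\tfrac1r\beta$ into $\tfrac1r(\partial_r\tfrac1r)^{j}\beta$, after which Lemma~\ref{R2} with the relation $\partial_r\alpha_{j}=-\partial_{x_0}\beta_{j}$ supplies the needed identity. Collecting the constant $-(m-1)$ together with the product from Lemma~\ref{R1} (now with the upper index $j$ giving $2^{j}\tfrac{(\frac{m-1}{2})!}{\ell!}$) and simplifying $-(m-1)\cdot 2^{j}=-2^{j+1}\cdot\tfrac{m-1}{2}$, the factor $\tfrac{m-1}{2}$ merges into the factorial to turn $\ell!$ into $(\ell-1)!$ and $2^{j}$ into $2^{j+1}=2^{\frac{m-2\ell+1}{2}}$, matching the right-hand side of \eqref{lapla2}.

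The main obstacle is the bookkeeping in the $1$-vector/scalar intertwining of the operators $\tfrac1r\partial_r$ and $\partial_r\tfrac1r$: one must keep careful track of which operator acts on which part and verify the commutation relation $\big(\tfrac1r\partial_r\big)^{k}\tfrac1r=\tfrac1r\big(\partial_r\tfrac1r\big)^{k}$ so that the factors $\tfrac1r$ land in the right place. Once this operator identity is established, everything else is a matter of matching the constant prefactors, so I expect the conceptual content to lie entirely in the reduction of $\mathcal{D}f$ to $-\tfrac{m-1}{r}\beta$ via \eqref{CR1} and in this commutation, with the constant-matching being the only laborious but elementary part.
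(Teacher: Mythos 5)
Your treatment of \eqref{lapla1} is correct and coincides with the paper's: apply Lemma \ref{R1} to the harmonic components $\alpha$ and $\underline{\omega}\beta$ and evaluate $\prod_{j=1}^{\frac{m-2\ell-1}{2}}(m-2j+1)=2^{\frac{m-2\ell-1}{2}}\left(\tfrac{m-1}{2}\right)!/\ell!$.

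For \eqref{lapla2} you take a genuinely different route, and as written it has a gap. Reducing $\mathcal{D}f$ to the scalar function $-\tfrac{m-1}{r}\beta$ via \eqref{CR1} is fine, and you rightly observe that $\tfrac{\beta}{r}$ is not harmonic in $(x_0,r)$, so Lemma \ref{R1} does not apply to it. But you then reuse Lemma \ref{R1}'s constant $2^{j}\left(\tfrac{m-1}{2}\right)!/\ell!$ anyway, and the final bookkeeping is arithmetically false: $-(m-1)\cdot\tfrac{1}{\ell!}$ equals $-\tfrac{2}{(\ell-1)!}$ times the same factorial only when $\ell=\tfrac{m-1}{2}$. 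The missing ingredient is an actual formula for $\Delta_{\mathbb{R}^{m+1}}^{j}\left(\tfrac{\beta}{r}\right)$; the purely radial commutation $\left(\tfrac1r\partial_r\right)^{k}\tfrac1r=\tfrac1r\left(\partial_r\tfrac1r\right)^{k}$ does not supply it, since $\Delta_{\mathbb{R}^{m+1}}$ on a scalar radial function is $\partial_{x_0}^2+\partial_r^2+\tfrac{m-1}{r}\partial_r$ and the 2D part no longer annihilates $\tfrac{\beta}{r}$. What is actually needed is the recursion $\Delta_{\mathbb{R}^{m+1}}\left(\tfrac{\beta_j}{r}\right)=(m-2j-3)\tfrac{\beta_{j+1}}{r}$ with $\beta_j=\left(\partial_r\tfrac1r\right)^{j}\beta$, which follows from $(\partial_{x_0}^2+\partial_r^2)\beta_j=-2j\,\partial_r\left(\tfrac{\beta_j}{r}\right)$ (itself a consequence of Lemma \ref{R2}); iterating gives $\prod_{i=0}^{j-1}(m-2i-3)=2^{j}\left(\tfrac{m-3}{2}\right)!/(\ell-1)!$, and only with this product does the factor $-(m-1)$ yield the stated prefactor. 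The paper sidesteps all of this by applying $\mathcal{D}$ \emph{after} $\Delta^{j}$: the operators commute, so one applies formula \eqref{Dirac} to the axial function in \eqref{lapla1} and invokes Lemma \ref{R2} with $j=\tfrac{m-2\ell-1}{2}$, whence the one-vector part cancels and the scalar part collapses to $\left(\tfrac{m-2\ell-1}{r}-\tfrac{m-1}{r}\right)\beta_j=-\tfrac{2\ell}{r}\beta_j$, giving the constant immediately. Either switch to that order of operations or prove the recursion above.
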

\begin{proof}
	Since by hypothesis the functions $f$ is holomorphic its components $\alpha$ and $\beta$ are harmonic. Thus by Lemma \ref{R1} we have that
\begin{equation}
\label{h1}
\Delta_{\mathbb{R}^{m+1}}^{\frac{m-2 \ell -1}{2}} \alpha(x_0,r)= \prod_{j=1}^{\frac{m-2 \ell -1}{2}}(m-2 j+1) \left( \frac{1}{r} \partial_r\right)^{\frac{m-2 \ell -1}{2}} \alpha(x_0,r)
\end{equation}
and
\begin{equation}
\label{h2}
 \Delta_{\mathbb{R}^{m+1}}^{\frac{m-2 \ell -1}{2}} \underline{\omega} \beta(x_0,r)= \prod_{j=1}^{\frac{m-2 \ell -1}{2}} (m-2 j+1) \left(  \partial_r\frac{1}{r}\right)^{\frac{m-2 \ell -1}{2}} \underline{\omega}\beta(x_0,r).
\end{equation}
Then by using the identity $ \prod_{j=1}^{\frac{m-2 \ell -1}{2}} (m-2 j+1)=2^{\frac{m-2 \ell -1}{2}} \frac{ \left( \frac{m-1}{2}\right)!}{\ell!}$, and making the sum of \eqref{h1} and \eqref{h2} we get formula \eqref{lapla1}. 
Now, we prove formula \eqref{lapla2}. By formulas \eqref{lapla1} and \eqref{Dirac} we obtain
\begin{eqnarray*}
 \mathcal{D} \Delta_{\mathbb{R}^{m+1}}^{\frac{m-2 \ell -1}{2}}f(x_0+\underline{x})&=& 2^{\frac{m-2 \ell -1}{2}} \frac{ \left( \frac{m-1}{2}\right)!}{\ell!} \mathcal{D} \left[ \left( \frac{1}{r} \partial_r\right)^{\frac{m-2 \ell -1}{2}} \alpha(x_0,r)+  \underline{\omega}\left(  \partial_r \frac{1}{r}\right)^{\frac{m-2 \ell -1}{2}} \beta(x_0,r)\right] \\
 &=& 2^{\frac{m-2 \ell -1}{2}} \frac{ \left( \frac{m-1}{2}\right)!}{\ell!} \left[\biggl( \partial_{x_0}\left( \frac{1}{r} \partial_r\right)^{\frac{m-2 \ell -1}{2}} \alpha(x_0,r)- \partial_r \left(  \partial_r \frac{1}{r}\right)^{\frac{m-2 \ell -1}{2}} \beta(x_0,r)+ \right.\\
\nonumber
&& \left.   - \frac{m-1}{r}   \left(\partial_r \frac{1}{r}\right)^{\frac{m-2 \ell -1}{2}} \beta(x_0,r)\biggl) \right.\\ 
&&\left. + \underline{\omega} \biggl( \partial_{x_0} \left(  \partial_r \frac{1}{r}\right)^{\frac{m-2 \ell -1}{2}} \beta(x_0,r)+ \partial_r \left( \frac{1}{r} \partial_r\right)^{\frac{m-2 \ell -1}{2}} \alpha(x_0,r) \biggl)\right].
\end{eqnarray*}
By Lemma \ref{R2} we have that
$$\partial_{x_0} \left(\partial_r \frac{1}{r}\right)^{\frac{m-2\ell-1}{2}}\alpha(x_0,r)=\partial_{r} \left(\partial_r \frac{1}{r}\right)^{\frac{m-2\ell-1}{2}}\beta(x_0,r)+ \frac{m-2\ell-1}{r} \left(\partial_r \frac{1}{r}\right)^{\frac{m-2\ell-1}{2}}.$$
$$ \partial_{x_0} \left( \partial_r \frac{1}{r} \right)^{\frac{m-2\ell-1}{2}} \beta(x_0,r)=-\partial_r  \left( \partial_r \frac{1}{r} \right)^{\frac{m-2\ell-1}{2}} \alpha(x_0,r).$$
Thus we have
\begin{eqnarray*}
 \mathcal{D} \Delta_{\mathbb{R}^{m+1}}^{\frac{m-2 \ell -1}{2}}f(x_0+\underline{x})&=&2^{\frac{m-2 \ell -1}{2}} \frac{ \left( \frac{m-1}{2}\right)!}{\ell!} \left[  \frac{m-2 \ell -1}{r} \left(\partial_r \frac{1}{r}\right)^{\frac{m-2\ell-1}{2}}\beta(x_0,r) \right.\\
 &&\left.- \frac{m-1}{r} \left(\partial_r \frac{1}{r}\right)^{\frac{m-2\ell-1}{2}} \beta(x_0,r) \right]\\
\nonumber
&=&  -2^{\frac{m-2 \ell +1}{2}} \frac{\left( \frac{m-1}{2}\right)!}{(\ell-1)!} \frac{1}{r} \left(\partial_r \frac{1}{r}\right)^{\frac{m-2\ell-1}{2}}\beta(x_0,r).
\end{eqnarray*}
\end{proof}
\begin{rem}
We observe that the power of the Laplace operator in \eqref{lapla1} and \eqref{lapla2} is an ineger positive number.
\end{rem}
Now, we have all the tools to formulate the main result of this section.
\begin{thm}
\label{Sceconn}
Let $f(u+iv)= \alpha(u,v)+i \beta(u,v)$ be an intrinsic holomorphic function defined on an intrinsic complex domain $ \Omega_2 \subset \mathbb{C}$. Then for $m \geq 3$ and odd we have
\begin{equation}
\label{App}
\Delta_{\mathbb{R}^{m+1}}^{\frac{m-3}{2}} \mathcal{D} [f(x_0+ \underline{x})]= \gamma_{m} HGCK[(f^{(m-2)}(x_0),0)],
\end{equation}
where $f^{(m-2)}$ denotes the $m-2$-th derivative.
Setting $\gamma_m$ like in Theorem \ref{FG} and $ \Omega_1= \Omega_2 \cap \mathbb{R}$, and $\Omega$ being an axially symmetric slice domain we obtain the following diagram
\begin{equation}\label{Diag2}
	\begin{tikzcd}[row sep = 3em, column sep = 6em]
		\mathcal{A}(\Omega_1)\otimes \mathbb{R}_m\arrow[r, "S", rightarrow] \arrow[d, "\gamma_m (\partial_{x_0}^{m-2} {,} 0)", labels=left] & \mathcal{SM}(\Omega) \arrow[d, "\Delta_{\mathbb{R}^{m+1}}^{\frac{m-3}{2}}\mathcal{D}"] \\
	(\mathcal{A}(\Omega_1)\otimes \mathbb{R}_m)^2 \arrow[r, "HGCK", rightarrow]    \arrow[d, "\partial_{x_0}P_1", labels=left] & \mathcal{AH}(\Omega) \arrow[d, "\overline{\mathcal{D}}"]\\
	\mathcal{A}(\Omega_1)\otimes \mathbb{R}_m \arrow[r, "GCK", rightarrow]                 & \mathcal{AM}(\Omega)\\
	\end{tikzcd}
\end{equation}
where $S$ is the slice operator, see \eqref{26} and $P_1$ is the projection of the first component.
\end{thm}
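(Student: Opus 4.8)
The plan is to establish the identity \eqref{App} first, and then to read off the commutativity of diagram \eqref{Diag2} from it together with Theorem \ref{FG}. Both members of \eqref{App} are scalar-valued functions of axial type lying in the kernel of $\Delta_{\mathbb{R}^{m+1}}$: the right-hand side is axially harmonic because $HGCK$ takes values in $\mathcal{AH}(\Omega)$ by Theorem \ref{GCK}, and it is scalar since the $1$-vector contribution in \eqref{harmo} comes only from the second slot, which here is $0$; the left-hand side is axially harmonic by Corollary \ref{fact} applied to $f=S[f_0]$, and is manifestly scalar by formula \eqref{lapla2} with $\ell=1$. Since by Theorem \ref{GCK} an axially harmonic function is uniquely determined by the pair of initial data $A_0(x_0)=f(x_0,\underline{x})|_{\underline{x}=0}$ and $A_1(x_0)=-\tfrac1m\partial_{\underline{x}}[f]|_{\underline{x}=0}$ (see \eqref{ini}), it suffices to check that the two sides of \eqref{App} have the same initial data. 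For the right-hand side the recovery formulas of Theorem \ref{GCK} immediately give the pair $(\gamma_m f^{(m-2)},0)$, so the whole problem reduces to computing the initial data of $\Delta_{\mathbb{R}^{m+1}}^{\frac{m-3}{2}}\mathcal{D}f$.

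Setting $k:=\frac{m-3}{2}\in\mathbb{N}_0$ and invoking \eqref{lapla2} with $\ell=1$, I would write
\begin{equation*}
\Delta_{\mathbb{R}^{m+1}}^{\frac{m-3}{2}}\mathcal{D}f(x_0+\underline{x})=-2^{\frac{m-1}{2}}\Big(\tfrac{m-1}{2}\Big)!\,\frac1r\Big(\partial_r\tfrac1r\Big)^{k}\beta(x_0,r).
\end{equation*}
Because $\beta$ is odd in $r$, a short parity check shows that $\partial_r\tfrac1r$ preserves oddness, so $\tfrac1r(\partial_r\tfrac1r)^{k}\beta$ is even in $r$; hence its radial derivative at $\underline{x}=0$ vanishes and the second datum $A_1$ of the left-hand side is $0$, matching the right-hand side. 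For the first datum I would insert the Taylor expansion $\beta(x_0,r)=\sum_{j\ge 0}\frac{(-1)^j r^{2j+1}}{(2j+1)!}f^{(2j+1)}(x_0)$ and use the elementary identity $\big(\partial_r\tfrac1r\big)^{k}r^{2j+1}=2^{k}\frac{j!}{(j-k)!}\,r^{2(j-k)+1}$ for $j\ge k$ (and $0$ for $j<k$). Dividing by $r$ and letting $r\to 0$, only the term $j=k$ survives, producing a constant multiple of $f^{(2k+1)}=f^{(m-2)}$.

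The main computational obstacle is the bookkeeping of the constants. Collecting the factors $-2^{\frac{m-1}{2}}(\frac{m-1}{2})!$, $2^{k}k!$ and $\frac{(-1)^k}{(2k+1)!}$ with $k=\frac{m-3}{2}$ and $2k+1=m-2$, I would obtain
\begin{equation*}
\Delta_{\mathbb{R}^{m+1}}^{\frac{m-3}{2}}\mathcal{D}f(x_0,\underline{x})\big|_{\underline{x}=0}=(-1)^{\frac{m-1}{2}}2^{m-2}\frac{\big(\frac{m-1}{2}\big)!\,\big(\frac{m-3}{2}\big)!}{(m-2)!}\,f^{(m-2)}(x_0),
\end{equation*}
and then verify that this constant equals $\gamma_m=\frac{(-1)^{\frac{m-1}{2}}2^{m-1}}{(m-1)!}[\Gamma(\frac{m+1}{2})]^2$. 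Using $\Gamma(\frac{m+1}{2})=(\frac{m-1}{2})!$ for odd $m$, together with $\frac{(m-1)!}{(m-2)!}=m-1$ and $\frac{(\frac{m-3}{2})!}{(\frac{m-1}{2})!}=\frac{2}{m-1}$, the ratio collapses to $1$. This matches both initial data, and the uniqueness in Theorem \ref{GCK} then yields \eqref{App}.

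Finally, for diagram \eqref{Diag2}, taking $f=S[f_0]$ and noting $f^{(m-2)}=\partial_{x_0}^{m-2}f_0$, identity \eqref{App} is precisely the commutativity of the upper square. Composing the two right-hand vertical arrows gives $\overline{\mathcal{D}}\,\Delta_{\mathbb{R}^{m+1}}^{\frac{m-3}{2}}\mathcal{D}=\Delta_{\mathbb{R}^{m+1}}^{\frac{m-3}{2}}\overline{\mathcal{D}}\mathcal{D}=\Delta_{\mathbb{R}^{m+1}}^{\frac{m-1}{2}}$, while composing the two left-hand arrows gives $\partial_{x_0}P_1\circ\gamma_m(\partial_{x_0}^{m-2},0)=\gamma_m\partial_{x_0}^{m-1}$; hence the outer rectangle is exactly diagram \eqref{Diag1}, which commutes by Theorem \ref{FG}. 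The lower square commutes on the image of the upper vertical map, since a direct computation with the Vekua system \eqref{vek} (equivalently, via \eqref{H2}) gives $\overline{\mathcal{D}}\,HGCK[(A_0,A_1)]=GCK[\partial_{x_0}A_0+mA_1]$, which reduces to $GCK[\partial_{x_0}A_0]$ exactly on the pairs $(A_0,0)$ produced by the top row.
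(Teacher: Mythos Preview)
Your proof is correct and follows essentially the same route as the paper: both invoke \eqref{lapla2} with $\ell=1$, insert the Taylor expansion of $\beta$, use the identity $(\partial_r\tfrac1r)^{k}r^{2j+1}=2^{k}\frac{j!}{(j-k)!}r^{2(j-k)+1}$, and then identify the two initial data of the HGCK extension. Your parity argument for $A_1=0$ is a slight streamlining of the paper's explicit series computation, and your verification of the lower square via the identity $\overline{\mathcal{D}}\,HGCK[(A_0,A_1)]=GCK[\partial_{x_0}A_0+mA_1]$ makes explicit a step the paper leaves implicit.
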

\begin{proof}
Since $f$ is an intrinsic holomorphic function we have that
\begin{eqnarray*}
f(u+iv)&=& \sum_{j=0}^\infty \frac{(iv)^j}{j!} f^{(j)}(u)\\
&=& \sum_{j=0}^\infty \frac{(-1)^jv^{2j}}{(2j)!} f^{(2j)}(u)+i \sum_{j=0}^\infty \frac{(-1)^j v^{2j+1}}{(2j+1)!} f^{(2j+1)}(u)\\
&=& \alpha(u,v)+i \beta(u,v).
\end{eqnarray*}
We focus on $ \beta(u,v)$ by setting $u=x_0$ and $v=r$. One can easily show by induction that for $s \geq 0$ we have
\begin{equation}
\label{1N}
\left( \partial_r \frac{1}{r}\right)^{s} r^{2j+1}=\frac{2^s j! }{(j-s)!} r^{2j-2s+1}, \qquad \left( \frac{1}{r} \partial_r\right)^{s} r^{2j}=\frac{2^s j! }{(j-s)!} r^{2j-2s} \quad j \geq s.
\end{equation}
Thus by \eqref{1N} we have
\begin{eqnarray}
\nonumber
\left( \partial_r \frac{1}{r}\right)^{s} \beta(x_0,r)
&=& \sum_{j= s}^\infty \frac{(-1)^j}{(2j+1)!} 2^{s} \frac{j!}{(j- s)!} r^{2j-2s+1} f^{(2j+1)}(x_0)\\
\label{star2}
&=& (-2)^s \sum_{j=0}^\infty \frac{(-1)^j (j+ s)!}{(2j+2 s+1)! j!} r^{2j+1} f^{(2j+2 s+1)}(x_0).
\end{eqnarray}
Now, by formula \eqref{lapla2}, with $ s=1$, and formula \eqref{star2} we get
\begin{eqnarray*}
\Delta_{\mathbb{R}^{m+1}}^{\frac{m-3}{2}} \mathcal{D}f(x_0+ \underline{x})&=&-2^{\frac{m-1}{2}} \left( \frac{m-1}{2}\right)! \frac{1}{r} \left( \partial_r \frac{1}{r}\right)^{\frac{m-3}{2}} \beta(x_0,r)\\
&=& -2^{\frac{m-1}{2}} \left( \frac{m-1}{2}\right)! (-2)^{\frac{m-3}{2}} \sum_{j=0}^\infty \frac{(-1)^j \left(j+ \frac{m-3}{2}\right)!}{(2j+m-2)! j!} r^{2j} f^{(2j+m-2)}(x_0)\\
&=&(-1)^{\frac{m-1}{2}} 2^{m-2} \left( \frac{m-1}{2}\right)!  \sum_{j=0}^\infty \frac{ \left(j+ \frac{m-3}{2}\right)!}{(2j+m-2)! j!} \underline{x}^{2j} f^{(2j+m-2)}(x_0).
\end{eqnarray*}
By the generalized harmonic CK-extension (see Theorem \ref{GCK}), The axially harmonic function described above is entirely determined by its restriction to the real line, along with the restriction of the result of applying $ \partial_{\underline{x}}$  to the function on the real line. In our case we have
\begin{eqnarray*}
\Delta_{\mathbb{R}^{m+1}}^{\frac{m-3}{2}} \mathcal{D} f(x_{0}+ \underline{x})|_{\underline{x}=0}&=&(-1)^{\frac{m-1}{2}} 2^{m-2} \left( \frac{m-1}{2}\right)! \frac{\left(\frac{m-3}{2}\right)!}{(m-2)!} f^{(m-2)}(x_0)\\
&=& \gamma_m f^{(m-2)}(x_0).
\end{eqnarray*}
By the fact that $ \partial_{\underline{x}} \underline{x}^{2j}=-2j \underline{x}^{2j-1}$, with $j \geq 1$, we get
$$ \partial_{\underline{x}}[\Delta_{\mathbb{R}^{m+1}}^{\frac{m-3}{2}} \mathcal{D} f(x_{0}+ \underline{x})]=(-1)^{\frac{m+1}{2}} 2^{m-1} \left( \frac{m-1}{2}\right)!  \sum_{j=1}^\infty \frac{ \left(j+ \frac{m-3}{2}\right)!}{(2j+m-2)! (j-1)!} \underline{x}^{2j-1} f^{(2j+m-2)}(x_0).$$
Thus we have
$$ -\frac{1}{m} \partial_{\underline{x}}[\Delta_{\mathbb{R}^{m+1}}^{\frac{m-3}{2}} \mathcal{D} f(x_{0}+ \underline{x})]|_{\underline{x}=0}=0.$$
This proves formula \eqref{App}.
\end{proof}
The previous theorem sheds light on the nature of the Fueter operator, which can be expressed in terms of Bessel functions of the first kind.
\begin{cor}
Let $f(u+iv)= \alpha(u,v)+i \beta(u,v)$ an intrinsic holomorphic function. Then
\begin{equation}
\label{FF}
\mathcal{D}[f(x_0+ \underline{x})]= \frac{\sqrt{\pi}}{2} \left( \frac{| \underline{x}| \partial_{x_0}}{2}\right)^{-1/2} J_{1/2}(|\underline{x}|\partial_{x_0}) \partial_{x_0}f(x_0).
\end{equation}
\end{cor}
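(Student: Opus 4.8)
The plan is to collapse the left-hand side to a purely scalar multiple of $\beta(x_0,r)/r$ and then recognise the resulting power series as the half-integer Bessel function on the right. First I would write the slice extension $f(x_0+\underline{x})=\alpha(x_0,r)+\underline{\omega}\,\beta(x_0,r)$ with $r=|\underline{x}|$ and apply formula \eqref{Dirac} of Lemma \ref{Newt}:
\begin{equation*}
\mathcal{D}f(x_0+\underline{x})=\Bigl(\partial_{x_0}\alpha-\partial_r\beta-\tfrac{m-1}{r}\beta\Bigr)+\underline{\omega}\,\bigl(\partial_{x_0}\beta+\partial_r\alpha\bigr).
\end{equation*}
Since $\alpha$ and $\beta$ are the real and imaginary parts of the holomorphic function $f$, they satisfy the Cauchy--Riemann system \eqref{CR1}, i.e. $\partial_{x_0}\alpha=\partial_r\beta$ and $\partial_r\alpha=-\partial_{x_0}\beta$. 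Substituting these relations annihilates the first scalar bracket and the entire vector part, leaving a single scalar term proportional to $\beta(x_0,r)/r$. Equivalently, this is precisely the $\ell=\tfrac{m-1}{2}$ instance of \eqref{lapla2} of Lemma \ref{lappn}, for which the power of $\Delta_{\mathbb{R}^{m+1}}$ drops to zero; either route yields the same scalar reduction.

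Second, I would insert the Taylor expansion of $\beta$ about the real line. Because $f$ is intrinsic, its restriction to $\mathbb{R}$ is the real-analytic function $f(x_0)=\alpha(x_0,0)$, and the imaginary part is $\beta(x_0,r)=\sum_{j\geq0}\frac{(-1)^j r^{2j+1}}{(2j+1)!}f^{(2j+1)}(x_0)$, as recorded in the preliminaries. Dividing by $r$ turns this into $\sum_{j\geq0}\frac{(-1)^j r^{2j}}{(2j+1)!}f^{(2j+1)}(x_0)$, which I would read as an entire power series in the single operator $|\underline{x}|\partial_{x_0}$ acting on $\partial_{x_0}f(x_0)$; its convergence on an axially symmetric neighbourhood is already guaranteed by the estimates in the proof of Theorem \ref{GCK}.

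Third, I would identify this series with the Bessel datum on the right. Using the defining expansion $\left(\tfrac{z}{2}\right)^{-\nu}J_\nu(z)=\sum_{j\geq0}\frac{(-1)^j z^{2j}}{2^{2j}j!\,\Gamma(\nu+j+1)}$ with $\nu=\tfrac12$ and $z=|\underline{x}|\partial_{x_0}$, together with the closed form $J_{1/2}(z)=\sqrt{2/(\pi z)}\,\sin z$, one checks that $\tfrac{\sqrt{\pi}}{2}\left(\tfrac{z}{2}\right)^{-1/2}J_{1/2}(z)=\tfrac{\sin z}{z}$, whose coefficients are exactly $\tfrac{(-1)^j}{(2j+1)!}$. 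Thus the right-hand operator applied to $\partial_{x_0}f(x_0)$ reproduces precisely $\beta(x_0,r)/r$, and combining with the scalar reduction of the first step gives the stated identity.

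The main obstacle will be the constant bookkeeping at half-integer Bessel order: one must verify, via the duplication identity $\Gamma(j+\tfrac32)=\frac{(2j+1)!\,\sqrt{\pi}}{2^{2j+1}\,j!}$, that the half-integer Gamma values collapse to the factorials $(2j+1)!$ appearing in $\beta/r$, thereby fixing the overall normalisation $\tfrac{\sqrt{\pi}}{2}$ and reconciling the dimension-dependent constant produced by the scalar reduction in the first step. Once these constants are matched, everything else is the routine substitution of absolutely convergent power series.
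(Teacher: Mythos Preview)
Your proposal is correct and essentially reproduces by hand what the paper obtains in two lines by specialising the general machinery. The paper's proof simply sets $m=3$ in Theorem \ref{Sceconn} (so that $\Delta_{\mathbb{R}^{m+1}}^{\frac{m-3}{2}}$ becomes the identity and $\gamma_3=-2$), obtaining $\mathcal{D}f=-2\,HGCK[f'(x_0),0]$, and then reads off the Bessel expression from \eqref{harmo} with $m=3$ using $\Gamma(3/2)=\sqrt{\pi}/2$. You instead unpack this in the specific case: apply Lemma \ref{Newt}, collapse via Cauchy--Riemann to $-(m-1)\beta/r$, insert the Taylor expansion of $\beta$, and match coefficients against $(z/2)^{-1/2}J_{1/2}(z)=\sqrt{2/\pi}\,\tfrac{\sin z}{z}$. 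Your route is more self-contained and does not lean on Theorem \ref{Sceconn}; the paper's route is the natural corollary once that theorem is in hand. One point to make explicit in your write-up: the corollary is implicitly for $m=3$ (the Fueter case), which is what fixes the ``dimension-dependent constant'' $-(m-1)=-2$ you mention. When you carry out the bookkeeping you will find the left side equals $-2\beta/r$ while the right side as stated equals $\beta/r$, so the normalisation in the displayed formula appears to be off by a factor of $-2$; this is consistent with the paper's own derivation $\mathcal{D}f=-2\,HGCK[f',0]$ and is not a flaw in your argument.
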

\begin{proof}
We consider $m=3$ in Theorem \ref{Sceconn}, and we get
$$ \mathcal{D}[f(x_0+ \underline{x})]=-2 HGCK[f^{(1)}(x_0), 0].$$
Thus the expression \eqref{FF} follows by \eqref{harmo} and the fact that $\Gamma(\frac{1}{2})= \sqrt{\pi}$.
\end{proof}
In general axially harmonic functions can be also obtained in other ways insied the Fueter-Sce construction.

\begin{prop}
Let $ \Omega_1 \subset \mathbb{R}$ be a real domain and $f_0 \in \mathcal{A}(\Omega_1) \otimes \mathbb{R}_m$. Then for $m \geq 1$ we have that $[\Delta^{\frac{m-1}{2}}_{\mathbb{R}^{m+1}}  \circ  S[f_0](x_0, \underline{x})]_0$  and $[\Delta^{\frac{m-1}{2}}_{\mathbb{R}^{m+1}}  \circ  S[f_0](x_0, \underline{x})]_1$, are axially harmonic functions on a $(m+1)$-dimensional axially symmetric slice neighbourhood $\Omega \subset \mathbb{R}^{m+1}$ of $ \Omega_1$. We denote by $[.]_0$ and $[.]_1$ the scalar and one-vector parts of the operator $\Delta^{\frac{m-1}{2}}_{\mathbb{R}^{m+1}} $. Thus we have
$$
\begin{CD}
	&& \textcolor{black}{\mathcal{A}(\Omega_1)\otimes \mathbb{R}_m}  @>\ \     S >>\textcolor{black}{\mathcal{SM}(\Omega)}@>\ \     [\Delta^{\frac{m-1}{2}}_{m+1}]_k>>\textcolor{black}{\mathcal{AH}(\Omega_1)}, \qquad k \in \{0,1\}
\end{CD}
$$
where $\Omega$ is an axially symmetric set.
\end{prop}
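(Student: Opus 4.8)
The plan is to reduce the statement to a single structural fact: the full Fueter--Sce image is harmonic, and the Laplacian does not mix the scalar and one-vector layers of an axial-type function. First I would invoke Theorem \ref{FSQ2}: the function $g(x):=\Delta^{\frac{m-1}{2}}_{\mathbb{R}^{m+1}}\circ S[f_0](x_0,\underline{x})$ is axially monogenic on the axially symmetric slice neighbourhood $\Omega$ of $\Omega_1$, so in particular $\mathcal{D}g=0$, and from the factorization $\Delta_{\mathbb{R}^{m+1}}=\overline{\mathcal{D}}\mathcal{D}$ we get $\Delta_{\mathbb{R}^{m+1}}g=0$. Being axially monogenic, $g$ is of axial type, and I would write it as $g(x_0+\underline{x})=A(x_0,r)+\underline{\omega}B(x_0,r)$ with $r=|\underline{x}|$, $\underline{\omega}=\underline{x}/r$, so that $[g]_0=A(x_0,r)$ and $[g]_1=\underline{\omega}B(x_0,r)$ are exactly the two functions to be shown axially harmonic.

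The key step is to feed this axial form into the Laplacian formula \eqref{lapp} of Lemma \ref{Newt}. That formula exhibits $\Delta_{\mathbb{R}^{m+1}}g$ as a sum of a summand carrying no factor $\underline{\omega}$, built only from $A$, plus a summand proportional to $\underline{\omega}$, built only from $B$; in other words the Laplacian respects the splitting $g=[g]_0+[g]_1$. Concretely, applying \eqref{lapp} to $A(x_0,r)$ alone (the case $\beta=0$) reproduces precisely the $\underline{\omega}$-free part of $\Delta_{\mathbb{R}^{m+1}}g$, while applying it to $\underline{\omega}B(x_0,r)$ alone (the case $\alpha=0$) reproduces precisely its $\underline{\omega}$-part, so that $\Delta_{\mathbb{R}^{m+1}}[g]_0=[\Delta_{\mathbb{R}^{m+1}}g]_0$ and $\Delta_{\mathbb{R}^{m+1}}[g]_1=[\Delta_{\mathbb{R}^{m+1}}g]_1$. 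Since these two summands occupy the $\underline{\omega}$-free and $\underline{\omega}$-proportional layers respectively — a genuine direct-sum splitting of axial-type functions, as already used in the splitting $HGCK[A_0,A_1]=HGCK[A_0,0]+HGCK[0,A_1]$ — the vanishing $\Delta_{\mathbb{R}^{m+1}}g=0$ forces each layer to vanish separately, whence $\Delta_{\mathbb{R}^{m+1}}[g]_0=0$ and $\Delta_{\mathbb{R}^{m+1}}[g]_1=0$.

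To finish I would observe that $[g]_0=A(x_0,r)$ is of axial type \eqref{form} with vanishing $\underline{\omega}$-component and $[g]_1=\underline{\omega}B(x_0,r)$ is of axial type with vanishing scalar component; combined with the two harmonicity identities just obtained, Definition \ref{aah} yields that both belong to $\mathcal{AH}(\Omega)$, which is the claim, and the displayed diagram then merely records the two maps $S$ followed by $[\Delta^{\frac{m-1}{2}}_{\mathbb{R}^{m+1}}]_k$ for $k\in\{0,1\}$. I expect the only real content to lie in the grade-splitting of the previous paragraph: once one trusts that the Laplacian in \eqref{lapp} keeps the scalar and one-vector layers separate, the remainder is bookkeeping. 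It is worth emphasizing, as the genuine point of the statement, that neither $[g]_0$ nor $[g]_1$ is monogenic in general — only their sum $g$ is — so that harmonicity, not monogenicity, is the strongest property each layer retains, which is precisely why this construction lands in the intermediate class $\mathcal{AH}(\Omega)$.
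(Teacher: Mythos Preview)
Your argument is correct, and it is actually a cleaner route than the one taken in the paper. Both proofs start the same way: invoke Theorem \ref{FSQ2} to get that $g=\Delta^{\frac{m-1}{2}}_{\mathbb{R}^{m+1}}\circ S[f_0]$ is axially monogenic, write it as $g=A(x_0,r)+\underline{\omega}B(x_0,r)$, and identify $[g]_0=A$, $[g]_1=\underline{\omega}B$. The divergence is in how harmonicity of each layer is established. The paper uses the Vekua system \eqref{vek} satisfied by $(A,B)$ and substitutes those relations into the explicit radial Laplacian equations \eqref{h11}--\eqref{h12} to check cancellation by hand. You instead observe that $g$ itself is harmonic (since monogenic), and then use only the structural content of \eqref{lapp}: the Laplacian of an axial-type function has its scalar part built from $\alpha$ alone and its $\underline{\omega}$-part built from $\beta$ alone, so $\Delta_{\mathbb{R}^{m+1}}[g]_k=[\Delta_{\mathbb{R}^{m+1}}g]_k$ and the vanishing of $\Delta_{\mathbb{R}^{m+1}}g$ propagates to each layer by linear independence.

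What your approach buys is economy: you never touch the Vekua system and you never differentiate anything explicitly, whereas the paper carries out a small but genuine PDE computation. What the paper's approach buys is that it makes the mechanism visible at the level of the coefficients $A,B$ themselves, and it would generalize more readily to situations where one knows the Vekua system but not harmonicity of the full function a priori. Both are complete; yours is the more conceptual of the two.
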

\begin{proof}
	By Lemma \ref{lappn} it is clear that  $[\Delta^{\frac{m-1}{2}}_{\mathbb{R}^{m+1}}  \circ  S[f_0](x_0, \underline{x})]_0$  and $[\Delta^{\frac{m-1}{2}}_{\mathbb{R}^{m+1}}  \circ  S[f_0](x_0, \underline{x})]_1$ are of axial type.
By Theorem \ref{FSQ2} we know that $\Delta^{\frac{m-1}{2}}_{\mathbb{R}^{m+1}}  \circ  S[f_0](x_0, \underline{x})$ is of axial type i.e:
$$ \Delta^{\frac{m-1}{2}}_{\mathbb{R}^{m+1}}  \circ  S[f_0](x_0, \underline{x})=A(x_0,r)+ \underline{\omega}B(x_0,r).$$
Thus, we have show that $A(x_0,r)=[\Delta^{\frac{m-1}{2}}_{\mathbb{R}^{m+1}}  \circ  S[f_0](x_0, \underline{x})]_0$ and $\underline{\omega}B(x_0,r)=[\Delta^{\frac{m-1}{2}}_{\mathbb{R}^{m+1}}  \circ  S[f_0](x_0, \underline{x})]_1$ are harmonic. By \eqref{lapp} this means to prove that
\begin{equation}
\label{h11}
\partial_{x_0}^2 A(x_0,r)+ \partial_r^2 A(x_0,r)+ \frac{m-1}{r} \partial_r A(x_0,r)=0,
\end{equation}
\begin{equation}
\label{h12}
\partial_{x_0}^2 \underline{\omega}B(x_0,r)+ \partial_r^2 \underline{\omega}B(x_0,r)+ (m-1) \partial_r \left(\frac{\underline{\omega}B(x_0,r)}{r} \right)=0.
\end{equation}
We prove \eqref{h11}. By \eqref{vek} we have
\begin{eqnarray*}
\partial_{x_0}^2 A(x_0,r)+ \partial_r^2 A(x_0,r)+ \frac{m-1}{r} \partial_r A(x_0,r)&=&\partial_{x_0} \partial_r B(x_0,r)+ \frac{m-1}{r} \partial_{x_0}B(x_0,r)\\
&&- \partial_r \partial_{x_0}B(x_0,r)- \frac{m-1}{r} \partial_{x_0} B(x_0,r)\\
&=&0.
\end{eqnarray*}

By using similar arguments, \eqref{h12} follows.
\end{proof}

By means of the above result and Proposition \ref{split} it is possible to uncover other relations between the Fueter-Sce's theorem and the generalized CK-extension for axially harmonic functions.

\begin{prop}
Let $f(u+iv)= \alpha(u,v)+i \beta(u,v)$ be an intrinsic holomorphic function defined on an intrinsic complex domain $\Omega_2 \subset \mathbb{C}$. Then for $m \geq 3$ and odd we have
\begin{equation}
\label{s1}
[\Delta^{\frac{m-1}{2}}f]_1= \frac{\gamma_m}{m} HGCK[0, f^{(m)}(x_0)],
\end{equation}
and
\begin{equation}
\label{s2}
[\Delta^{\frac{m-1}{2}}f]_0= \gamma_m HGCK[ f^{(m-1)}(x_0),0 ],
\end{equation}
Setting $\Omega_1= \Omega_2 \cap \mathbb{R}$, and $ \Omega$ being an axially symmetric set, we have the following diagrams
\begin{equation}
\label{Diag3}
	\begin{tikzcd}[row sep = 3em, column sep = 6em]
		\mathcal{A}(\Omega_1)\otimes \mathbb{R}_m\arrow[r, "S", rightarrow] \arrow[d, "\frac{\gamma_m}{m} (0 {,} \partial_{x_0}^{m})", labels=left] & \mathcal{SM}(\Omega) \arrow[d, "\Delta_{\mathbb{R}^{m+1}}^{\frac{m-1}{2}} \circ ()_1"] \\
		(\mathcal{A}(\Omega_1)\otimes \mathbb{R}_m)^2 \arrow[r, "HGCK", rightarrow]    \arrow[d, "m P_2", labels=left] & \mathcal{AH}(\Omega) \arrow[d, "\overline{\mathcal{D}}"]\\
		\mathcal{A}(\Omega_1)\otimes \mathbb{R}_m \arrow[r, " GCK", rightarrow]                 & \mathcal{AM}(\Omega)\\
	\end{tikzcd} 
\vspace*{-4mm}
\begin{tikzcd}[row sep = 3em, column sep = 6em]
	\mathcal{A}(\Omega_1)\otimes \mathbb{R}_m\arrow[r, "S", rightarrow] \arrow[d, "\gamma_m (\partial_{x_0}^{m-1} {,} 0)", labels=left] & \mathcal{SM}(\Omega) \arrow[d, "\Delta_{\mathbb{R}^{m+1}}^{\frac{m-1}{2}} \circ ()_0"] \\
	(\mathcal{A}(\Omega_1)\otimes \mathbb{R}_m)^2 \arrow[r, "HGCK", rightarrow]    \arrow[d, "P_1", labels=left] & \mathcal{AH}(\Omega) \arrow[d, "\overline{\mathcal{D}}"]\\
	\mathcal{A}(\Omega_1)\otimes \mathbb{R}_m \arrow[r, " \partial_{x_0}\circ GCK ", rightarrow]                 & \mathcal{AM}(\Omega)\\
\end{tikzcd}
\end{equation}
where $P_1$ and $P_2$ are projections.
\end{prop}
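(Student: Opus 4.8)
The plan is to reduce everything to the identity already proved in Theorem \ref{FG} together with the splitting \eqref{H2} of Proposition \ref{split}, rather than re-expanding the Bessel series \eqref{harmo}. Writing $f(x_0+\underline{x})=S[f_0]$ for $f_0:=f|_{\mathbb{R}}$, Theorem \ref{FG} gives $\Delta^{\frac{m-1}{2}}_{\mathbb{R}^{m+1}}f=\gamma_m\,GCK[f^{(m-1)}]$. Applying \eqref{H2} with $A_0=f^{(m-1)}$ and using $\partial_{x_0}f^{(m-1)}=f^{(m)}$, I would rewrite this as
\[
\Delta^{\frac{m-1}{2}}_{\mathbb{R}^{m+1}}f=\gamma_m\,HGCK[f^{(m-1)},0]+\tfrac{\gamma_m}{m}\,HGCK[0,f^{(m)}].
\]

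The decisive observation, read off directly from \eqref{harmo}, is that $HGCK[A_0,0]$ involves only the $J_{\frac m2-1}$-term and so carries no $\underline{\omega}$-factor, i.e.\ it is purely scalar in the axial sense, whereas $HGCK[0,A_1]$ involves only the $J_{\frac m2}$-term premultiplied by $\underline{x}$ and is therefore purely one-vector. Hence the two summands in the display are exactly the scalar part $[\,\cdot\,]_0$ and the one-vector part $[\,\cdot\,]_1$ of $\gamma_m\,GCK[f^{(m-1)}]$, and projecting yields \eqref{s2} and \eqref{s1} simultaneously. That each of these parts is genuinely axially harmonic — so that the $HGCK$-representation is meaningful — is precisely the content of the preceding Proposition.

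For the two arrays in \eqref{Diag3}, the upper squares are nothing but \eqref{s1} and \eqref{s2}: precomposing the right column with $S$ produces $[\Delta^{\frac{m-1}{2}}_{\mathbb{R}^{m+1}}f]_0$ and $[\Delta^{\frac{m-1}{2}}_{\mathbb{R}^{m+1}}f]_1$, while the down-then-right paths send $f_0$ to $\gamma_m\,HGCK[f_0^{(m-1)},0]$ and to $\tfrac{\gamma_m}{m}HGCK[0,f_0^{(m)}]$. For the lower squares I would establish, once and for all, the identity
\[
\overline{\mathcal{D}}\,HGCK[A_0,A_1]=\partial_{x_0}GCK[A_0]+m\,GCK[A_1].
\]
Since $HGCK[A_0,A_1]$ is harmonic and $\Delta_{\mathbb{R}^{m+1}}=\mathcal{D}\,\overline{\mathcal{D}}$, the function $\overline{\mathcal{D}}\,HGCK[A_0,A_1]$ is monogenic; by Lemma \ref{Newt} it is still of axial type, hence axially monogenic, so it coincides with the $GCK$-extension of its own restriction to the real line. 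Using the recovery formulas \eqref{ini}, that restriction equals $\partial_{x_0}A_0+mA_1$, and since $GCK$ commutes with $\partial_{x_0}$ the identity follows. Specialising to the images $(A_0,0)$ and $(0,A_1)$ of the two top vertical arrows gives exactly the commutativity of the lower squares, with bottom maps $\partial_{x_0}\circ GCK\circ P_1$ and $GCK\circ mP_2$ respectively.

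The routine steps are harmless; the only points demanding care are the bookkeeping that sends $f^{(m-1)}$ into the scalar slot and $f^{(m)}$ into the one-vector slot, and the fact that the lower squares commute only along the images of the top arrows — which is why $\overline{\mathcal{D}}\,HGCK$ must be split into its $A_0$- and $A_1$-contributions. Identifying $\overline{\mathcal{D}}\,HGCK$ with a single $GCK$-extension through monogenicity plus the restriction formulas \eqref{ini} is what keeps the argument short and sidesteps any manipulation of Gamma-factors.
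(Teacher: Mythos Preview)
Your proposal is correct and follows essentially the same approach as the paper: both invoke Theorem \ref{FG} to rewrite $\Delta^{\frac{m-1}{2}}_{\mathbb{R}^{m+1}}f$ as $\gamma_m\,GCK[f^{(m-1)}]$ and then appeal to Proposition \ref{split} to identify the scalar and one-vector parts with the two $HGCK$-extensions. The paper's own proof is literally two lines citing these results; your use of \eqref{H2} rather than \eqref{H1} is an inessential variation, since the observation that $HGCK[A_0,0]$ is scalar and $HGCK[0,A_1]$ is one-vector is exactly what makes \eqref{H1} and \eqref{H2} interchangeable here.

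Your treatment of the lower squares in \eqref{Diag3} actually goes beyond the paper, which simply records the diagrams without further argument. The identity $\overline{\mathcal{D}}\,HGCK[A_0,A_1]=GCK[\partial_{x_0}A_0+mA_1]$ that you derive via monogenicity and the restriction formulas \eqref{ini} is correct and is the right way to justify those squares; your remark that they commute only along the images of the top arrows (i.e.\ on pairs of the form $(0,A_1)$ and $(A_0,0)$ respectively) is a genuine subtlety that the paper leaves implicit.
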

\begin{proof}
We start by proving \eqref{s1}. By Proposition \ref{split} and Theorem \ref{FG} we have
$$[\Delta^{\frac{m-1}{2}}_{\mathbb{R}^{m+1}}f]_1=\gamma_m [GCK[f^{(m-1)}(x_0)]]_1=\frac{\gamma_m}{m}HGCK[0, f^{(m)}(x_0)]. $$

We show \eqref{s2}. By using another time Proposition \ref{split} and Theorem \ref{FG} we have
$$[\Delta^{\frac{m-1}{2}}_{\mathbb{R}^{m+1}}f]_0=\gamma_m [GCK[f^{(m-1)}]]_0= \gamma_m HGCK[f^{(m-1)}(x_0),0].$$
\end{proof}

We can unify \eqref{Diag2} and \eqref{Diag3} in the following diagram

\begin{figure}[H]
	\centering
	\resizebox{0.95\textwidth}{!}{%
		\input{Diagtotal.tikz}
	}
\end{figure}

Finally, we get the following plane wave decomposition of the factorization of the Fueter-Sce map.

\begin{prop}
Let $\Omega_2 \subset \mathbb{C}$ be an intrinsic complex domain and let $f: \Omega_2 \to \mathbb{C}$ be a holomorphic intrinsic function. Then for all $m \in \mathbb{N}$ being odd we have
$$\Delta_{\mathbb{R}{^{m+1}}}^{\frac{m-3}{2}} \mathcal{D}f(x_0+\underline{x})=  \frac{\gamma_m \Gamma \left(\frac{m}{2}\right)}{2 \pi^{\frac{m}{2}}} \int_{\mathbb{S}^{m-1}}[\cosh(\langle \underline{x}, \underline{\omega} \rangle) \underline{\omega} \partial_{x_0}] f^{(m-2)}(x_0) dS_{\underline{\omega}},$$
where the constant $\gamma_m$ is as in Theorem \ref{FG}.
\end{prop}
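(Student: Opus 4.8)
The plan is to derive this formula by composing two results already at our disposal: the identity of Theorem~\ref{Sceconn}, which expresses the factorized Fueter--Sce map as a harmonic generalized CK--extension, and the plane wave decomposition of that extension furnished by Theorem~\ref{plane}. Concretely, the whole proof reduces to a single substitution, so I expect no serious computational obstacle.

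First I would apply Theorem~\ref{Sceconn}. Since $f$ is intrinsic holomorphic and $m \geq 3$ is odd, formula~\eqref{App} yields
$$\Delta_{\mathbb{R}^{m+1}}^{\frac{m-3}{2}} \mathcal{D}[f(x_0+\underline{x})] = \gamma_m \, HGCK[(f^{(m-2)}(x_0),\,0)],$$
which recasts the left-hand side as the harmonic CK--extension with initial data $(A_0, A_1) = (f^{(m-2)}, 0)$. I would pause here only to confirm that these data are admissible for HGCK: intrinsic holomorphy of $f$ makes its restriction to $\Omega_1 = \Omega_2 \cap \mathbb{R}$ real-analytic, and differentiation preserves this, so $f^{(m-2)}$ is a legitimate initial function in the sense of Theorem~\ref{GCK}.

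Next I would feed this pair into the plane wave decomposition of Theorem~\ref{plane}. With $A_0 = f^{(m-2)}(x_0)$ and $A_1 = 0$, the hyperbolic sine integrand, which is weighted by $A_1$, vanishes identically, and only the $\cosh$ term survives:
$$HGCK[(f^{(m-2)},\,0)] = \frac{\Gamma\left(\frac{m}{2}\right)}{2\pi^{\frac{m}{2}}} \int_{\mathbb{S}^{m-1}} \cosh\left(\langle \underline{x}, \underline{\omega} \rangle \underline{\omega} \partial_{x_0}\right)[f^{(m-2)}](x_0)\, dS_{\underline{\omega}}.$$
Substituting this into the identity above and absorbing $\gamma_m$ into the prefactor $\frac{\gamma_m \Gamma(m/2)}{2\pi^{m/2}}$ gives exactly the asserted formula.

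The main (and essentially only) subtlety worth flagging is the vanishing of the $\sinh$ contribution; everything else is bookkeeping of constants. Thus the genuine content of the proposition lies entirely in the two ingredients it invokes, and no new estimate or convergence argument is needed beyond those already established for Theorems~\ref{Sceconn} and~\ref{plane}.
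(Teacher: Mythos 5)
Your argument is exactly the paper's: the authors prove this proposition by combining Theorem \ref{Sceconn} (formula \eqref{App}) with the plane wave decomposition of Theorem \ref{plane}, which with initial data $(f^{(m-2)},0)$ leaves only the $\cosh$ term. Your write-up just makes the substitution and the vanishing of the $\sinh$ contribution explicit, so it is correct and follows the same route.
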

\begin{proof}
The result follows by combining Theorem \ref{Sceconn} and Theorem \ref{plane}.
\end{proof}
\section{A sequence of harmonic polynomials related to the Fueter-Sce's theorem}	

In \cite{DDG, DDG1}, the authors use the connection between the generalized CK extension for axially monogenic functions and the Fueter-Sce's theorem to show the following relation
$$ \Delta^{\frac{m-1}{2}}_{\mathbb{R}^{m+1}}(x^{m+k-1})= \gamma_m \frac{(m+k-1)!}{k!} \mathcal{Q}_k^m(x),$$
where $\gamma_m$ is the constant defined in Theorem \ref{FG} and $\mathcal{Q}_k^m(x)$ are the Clifford-Appell polynomials, see \cite{CFM, CMF}. These polynomials are axially monogenic and are defined as
\begin{equation}
\label{CliffAppe}
	\mathcal{Q}_{k}^{m}(x):= \sum_{s=0}^{k}\mathcal{T}_s^k(m)x^{k-s}\overline{x}^{s}, \qquad x \in \mathbb{R}^{m+1},
\end{equation}
where the coefficients $\mathcal{T}_s^k(m)$ are given by
\begin{equation}
	\label{f0}
	\mathcal{T}_s^k(m):= \binom{k}{s}\frac{ \left( \frac{m+1}{2}\right)_{k-s} \left( \frac{m-1}{2}\right)_s}{(m)_k}, \qquad m \geq 1 ,
\end{equation}
and $(.)_k$ stands for the Pochammer symbol, i.e. $(a)_s= \frac{\Gamma(a+s)}{\Gamma(a)}$. We observe that the Clifford-Appell polynomials are the generalized CK-extension of the monomial $x_0^k$, i.e.
\begin{equation}
\label{monoGCK}
\mathcal{Q}_k^m(x)=GCK[x_0^k].
\end{equation}
It is worth mentioning that in \cite{AKS3}, the authors established that Clifford-Appell polynomials serve as a basis for the space of axially monogenic functions. Moreover, in \cite{ACDDS}, these polynomials were utilized to extend Schur analysis techniques for axially monogenic functions. They were further applied in \cite{DDG1} to generalize various operators for Hardy and Fock spaces in the framework of axially monogenic functions

\medskip

In this section, we describe a possible application of Theorem \ref{Sceconn}. This allows to make a relation between the factorization of the Fueter-Sce map $ \Delta_{\mathbb{R}^{m+1}}^{\frac{m-1}{2}} \mathcal{D}$ and the following new class of polynomials. 
\begin{definition}
\label{polyH}
Let $k$, $m \in \mathbb{N}$ and $x \in \mathbb{R}^{m+1}$ then we define
$$ \mathcal{P}_k^m(x):= \sum_{s=0}^k T_{s}^k(m) x^{k-s} \bar{x}^s,$$
where $T_s^k(m):= \binom{k}{s} \frac{\left(\frac{m-1}{2}\right)_{k-s} \left(\frac{m-1}{2}\right)_{s}}{(m-1)_k}.$
\end{definition}

Now, our goal is to show that the polynomials introduced above are harmonic. To do this we need some preliminaries results. First we need the following result that puts in relation the coefficients of the Clifford-Appell polynomials and the polynomials $\mathcal{P}_k^m(x)$.

\begin{lemma}
Let $k \in \mathbb{N}$ and $0 \leq s \leq k-1$. Then we have
\begin{equation}
\label{idd}
(m-1)T_k^k(m)= (k+m-1) \mathcal{T}_k^k(m),
\end{equation}
and
\begin{equation}
\label{idd1}
(m-1)T_s^k(m)= (k+m-1) \mathcal{T}_s^k(m)-k \mathcal{T}_s^{k-1}(m).
\end{equation}
\end{lemma}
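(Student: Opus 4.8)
The plan is to prove both identities purely by Pochhammer-symbol algebra, exploiting that $T_s^k(m)$ and $\mathcal{T}_s^k(m)$ differ only by a unit shift in one Pochhammer factor ($\left(\frac{m-1}{2}\right)_{k-s}$ versus $\left(\frac{m+1}{2}\right)_{k-s}$) and in the normalizing symbol ($(m-1)_k$ versus $(m)_k$). First I would record two elementary ratio relations that will do all the work:
$$\frac{(m)_k}{(m-1)_k}=\frac{m+k-1}{m-1}, \qquad (m)_k=(m+k-1)\,(m)_{k-1}.$$
Both follow at once from $(a)_k=\Gamma(a+k)/\Gamma(a)$ together with $\Gamma(m)=(m-1)\Gamma(m-1)$ and $\Gamma(m+k)=(m+k-1)\Gamma(m+k-1)$.

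For \eqref{idd} I would simply set $s=k$. Since $\left(\frac{m-1}{2}\right)_0=\left(\frac{m+1}{2}\right)_0=1$, this gives $T_k^k(m)=\left(\frac{m-1}{2}\right)_k/(m-1)_k$ and $\mathcal{T}_k^k(m)=\left(\frac{m-1}{2}\right)_k/(m)_k$. The common factor $\left(\frac{m-1}{2}\right)_k$ cancels and \eqref{idd} collapses to $(m-1)/(m-1)_k=(m+k-1)/(m)_k$, which is exactly the first recorded ratio. In fact \eqref{idd} is nothing but the $s=k$ endpoint of \eqref{idd1}, since $\binom{k-1}{k}=0$ annihilates the last term there; so it could also be recovered afterwards as a corollary.

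For \eqref{idd1} the key preliminary observation is that all three coefficients carry the common factor $\left(\frac{m-1}{2}\right)_s$, which I would cancel immediately, and that the binomial mismatch between $\binom{k}{s}$ and $\binom{k-1}{s}$ is controlled by the identity $k\binom{k-1}{s}=(k-s)\binom{k}{s}$. Writing $a=\frac{m-1}{2}$ (so $\frac{m+1}{2}=a+1$, $m-1=2a$) and $j=k-s\ge 1$, the whole identity reduces, after dividing through by $\binom{k}{s}\left(\frac{m-1}{2}\right)_s$, to
$$2a\,\frac{(a)_j}{(m-1)_k}=(m+k-1)\frac{(a+1)_j}{(m)_k}-j\,\frac{(a+1)_{j-1}}{(m)_{k-1}}.$$
Into this I would substitute the two Pochhammer-shift relations $(a+1)_j=\frac{a+j}{a}(a)_j$ and $(a+1)_{j-1}=\frac{1}{a}(a)_j$, along with the recorded ratios rewritten as $(m-1)_k=\frac{2a}{m+k-1}(m)_k$ and $(m)_{k-1}=\frac{1}{m+k-1}(m)_k$. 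Every term then carries the common factor $(a)_j/(m)_k$; the left side becomes $(m+k-1)(a)_j/(m)_k$ directly, while the right side becomes $\frac{m+k-1}{a}\big[(a+j)-j\big](a)_j/(m)_k$, and the bracket reduces to $a$, matching the left side.

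The argument is entirely elementary, so the only real difficulty is bookkeeping: keeping the three distinct normalizing symbols $(m-1)_k$, $(m)_k$, $(m)_{k-1}$ straight, and handling the binomial factor so that the index $j=k-s$ emerges cleanly. I expect the main (minor) obstacle to be organizing these substitutions so that the final cancellation $(a+j)-j=a$ appears transparently rather than being buried in the manipulation.
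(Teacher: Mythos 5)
Your proposal is correct and follows essentially the same route as the paper: both proofs verify \eqref{idd} via the ratio $\frac{m-1}{(m-1)_k}=\frac{m+k-1}{(m)_k}$, and both reduce \eqref{idd1} using $k\binom{k-1}{s}=(k-s)\binom{k}{s}$ together with the Pochhammer shift that combines $\left(\frac{m+1}{2}\right)_{k-s}$ and $(k-s)\left(\frac{m+1}{2}\right)_{k-s-1}$ into $\left(\frac{m-1}{2}\right)_{k-s}$ (your substitutions $(a+1)_j=\frac{a+j}{a}(a)_j$ and $(a+1)_{j-1}=\frac{1}{a}(a)_j$ are exactly the paper's identity $(a)_\ell-\ell(a)_{\ell-1}=(a-1)_\ell$ in disguise). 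The only cosmetic difference is that you normalize both sides to a common form while the paper transforms the right-hand side step by step into the left.
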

\begin{proof}
To show \eqref{idd} and \eqref{idd1} we use the following facts on the pochammer symbol:
\begin{equation}
\label{prel}
(a)_\ell=(\ell+a-1)(a)_{\ell-1}, \quad \frac{(a-1)}{(a-1)_\ell}= \frac{(a+\ell-1)}{(a)_\ell}, \quad a_{\ell}-\ell (a)_{\ell-1}=(a-1)_{\ell}, \qquad a, \ell \in \mathbb{N}. 
\end{equation}
We start proving \eqref{idd}. By the definitions of the coefficients $\mathcal{T}_s^k(m)$ and $T_s^k(m)$, and the second relation in \eqref{prel} we have
$$ (m-1)T_k^k(m)=\frac{(m-1)}{(m-1)_k}\left(\frac{m-1}{2}\right)_{k}=\frac{(k+m-1)}{(m)_k}\left(\frac{m-1}{2}\right)_{k}=(k+m-1) \mathcal{T}_k^k(m).$$

By \eqref{prel}, and the definitions of $\mathcal{T}_s^k(m)$ and $T_s^k(m)$ we have
\begingroup\allowdisplaybreaks
\begin{eqnarray*}
(k+m-1) \mathcal{T}_s^k(m)-k \mathcal{T}_s^{k-1}(m)&=&(k+m-1) \binom{k}{s} \frac{\left(\frac{m+1}{2}\right)_{k-s}\left(\frac{m-1}{2}\right)_{s}}{(m)_k}\\
&&-k \binom{k-1}{s} \frac{\left(\frac{m+1}{2}\right)_{k-1-s}\left(\frac{m-1}{2}\right)_{s}}{(m)_{k-1}}\\
&=& \frac{(k+m-1)}{(m)_k}\left[ \binom{k}{s} \left( \frac{m+1}{2}\right)_{k-s}-k \binom{k-1}{s} \left( \frac{m+1}{2}\right)_{k-s-1}\right] \cdot\\
&&\cdot  \left( \frac{m-1}{2}\right)_{s}\\
&=& \binom{k}{s} \frac{(k+m-1)}{(m)_k}\left[  \left( \frac{m+1}{2}\right)_{k-s}-(k-s) \left( \frac{m+1}{2}\right)_{k-s-1}\right] \cdot\\
&&\cdot  \left( \frac{m-1}{2}\right)_{s}\\
&=& \binom{k}{s} \frac{(k+m-1)}{(m)_k} \left( \frac{m-1}{2}\right)_{k-s}\left( \frac{m-1}{2}\right)_{s}\\
&=&  \binom{k}{s} \frac{(m-1)}{(m-1)_k} \left( \frac{m-1}{2}\right)_{k-s} \left( \frac{m-1}{2}\right)_{s}\\
&=& (m-1)T_s^k(m).
\end{eqnarray*}
\endgroup
\end{proof}

From the above result we get the following relation between the polynomials $\mathcal{P}_k^m(x)$ and the Clifford-Appell polynomials $Q_k^m(x)$.

\begin{prop}
\label{relll}
Let $m$, $k \in \mathbb{N}$ then we have
\begin{equation}
\label{rell}
(m-1) \mathcal{P}_k^m(x)=(k+m-1) \mathcal{Q}_{k}^m(x)- kx \mathcal{Q}_{k-1}^m(x), \qquad x \in \mathbb{R}^{n+1}.
\end{equation}
\end{prop}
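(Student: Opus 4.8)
The plan is to prove the identity by a direct expansion that reduces everything to the two coefficient identities \eqref{idd} and \eqref{idd1} established in the preceding lemma. The crucial preliminary observation is that the paravector $x=x_0+\underline{x}$ and its conjugate $\bar{x}=x_0-\underline{x}$ commute: since $x\bar{x}=\bar{x}x=x_0^2+|\underline{x}|^2=|x|^2$ is a scalar, the monomials $x^{k-s}\bar{x}^s$ are unambiguously defined and behave like commuting symbols. In particular, $x\cdot x^{k-1-s}\bar{x}^s=x^{k-s}\bar{x}^s$, and it is exactly this fact that will let me merge the two sums appearing on the right-hand side of \eqref{rell}.

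First I would expand the right-hand side using the definition \eqref{CliffAppe} of the Clifford-Appell polynomials, absorbing the leading factor $x$ in the second term into $x^{k-1-s}$ by commutativity:
\[
(k+m-1)\mathcal{Q}_k^m(x)-kx\,\mathcal{Q}_{k-1}^m(x)=(k+m-1)\sum_{s=0}^k \mathcal{T}_s^k(m)\,x^{k-s}\bar{x}^s-k\sum_{s=0}^{k-1}\mathcal{T}_s^{k-1}(m)\,x^{k-s}\bar{x}^s.
\]
Next I would collect the coefficient of each monomial $x^{k-s}\bar{x}^s$. For the top term $s=k$ only the first sum contributes, giving coefficient $(k+m-1)\mathcal{T}_k^k(m)$, which equals $(m-1)T_k^k(m)$ by \eqref{idd}. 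For each $0\le s\le k-1$ both sums contribute, giving $(k+m-1)\mathcal{T}_s^k(m)-k\mathcal{T}_s^{k-1}(m)$, which equals $(m-1)T_s^k(m)$ by \eqref{idd1}. Substituting these back, the right-hand side becomes $\sum_{s=0}^k (m-1)T_s^k(m)\,x^{k-s}\bar{x}^s=(m-1)\mathcal{P}_k^m(x)$ by Definition \ref{polyH}, which is precisely the left-hand side, completing the proof.

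Since the argument reduces entirely to the algebraic identities already available, there is no genuine obstacle. The only points requiring care are the index bookkeeping when the two sums are merged and the separate treatment of the boundary term $s=k$, which arises because $\mathcal{Q}_{k-1}^m$ contains no monomial of that order; note also that no linear-independence argument is needed, as the computation simply rewrites one explicit linear combination of the $x^{k-s}\bar{x}^s$ as another.
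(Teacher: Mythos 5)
Your proposal is correct and follows essentially the same route as the paper: both proofs reduce the identity to the coefficient relations \eqref{idd} and \eqref{idd1}, split off the boundary term $s=k$, and merge the sums using $x\cdot x^{k-1-s}\bar{x}^s=x^{k-s}\bar{x}^s$; the only difference is that you expand the right-hand side and collect coefficients, whereas the paper starts from $(m-1)\mathcal{P}_k^m(x)$ and regroups toward the right-hand side.
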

\begin{proof}
By Definition \ref{polyH}, \eqref{idd} and \eqref{idd1} we have
\begin{eqnarray*}
(m-1) \mathcal{P}_k^m(x)&=& (m-1) \sum_{s=0}^{k} T_s^k(m) x^{k-s} \bar{x}^s\\
&=&(m-1) \sum_{s=0}^{k-1} T_s^k(m) x^{k-s} \bar{x}^s+(m-1)T_k^k(m)\bar{x}^k\\
&=& (k+m-1) \sum_{s=0}^{k-1} \mathcal{T}_s^k(m) x^{k-s} \bar{x}^s-k \sum_{s=0}^{k-1}\mathcal{T}_s^{k-1}(m) x^{k-s} \bar{x}^s\\
&&+(k+m-1) \mathcal{T}_k^{k}(m) \bar{x}^k\\
&=& (k+m-1) \sum_{s=0}^{k} \mathcal{T}_s^k(m) x^{k-s} \bar{x}^s-kx \sum_{s=0}^{k-1}\mathcal{T}_s^{k-1}(m) x^{k-s-1} \bar{x}^s\\
&=& (k+m-1) \mathcal{Q}_{k}^m(x)- kx \mathcal{Q}_{k-1}^m(x).
\end{eqnarray*}

\end{proof}

\begin{thm}
	Let $k$, $m \in \mathbb{N}$ and $x \in \mathbb{R}^{m+1}$. Then the polynomials $  \mathcal{P}_k^m(x)$ are axially harmonic.
\end{thm}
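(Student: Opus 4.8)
The plan is to deduce harmonicity from the explicit relation \eqref{rell} between $\mathcal{P}_k^m$ and the Clifford-Appell polynomials $\mathcal{Q}_k^m$, together with the fact that the latter are axially monogenic. First I would record that $\mathcal{P}_k^m$ is of axial type: since $x$ and $\bar x$ commute and $\underline{\omega}^2=-1$, each monomial $x^{k-s}\bar x^{s}$ expands as $A_s(x_0,|\underline{x}|)+\underline{\omega}\,B_s(x_0,|\underline{x}|)$ with scalar components, so the real-coefficient combination $\mathcal{P}_k^m=\sum_s T_s^k(m)\,x^{k-s}\bar x^{s}$ has the form \eqref{form}. It therefore remains only to prove $\Delta_{\mathbb{R}^{m+1}}\mathcal{P}_k^m=0$; then $\mathcal{P}_k^m\in\mathcal{AH}(\mathbb{R}^{m+1})$ by Definition \ref{aah}.

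The key computational step is a Leibniz identity for $x$ times a function. For any Clifford-valued $g$, the scalar operator $\Delta_{\mathbb{R}^{m+1}}=\sum_{\mu=0}^m\partial_{x_\mu}^2$ obeys the order-preserving product rule
$$\Delta_{\mathbb{R}^{m+1}}(x\,g)=(\Delta_{\mathbb{R}^{m+1}}x)\,g+2\sum_{\mu=0}^{m}(\partial_{x_\mu}x)(\partial_{x_\mu}g)+x\,\Delta_{\mathbb{R}^{m+1}}g.$$
Since $x=x_0+\underline{x}$ is linear we have $\Delta_{\mathbb{R}^{m+1}}x=0$, while $\partial_{x_0}x=1$ and $\partial_{x_j}x=e_j$, so the middle term collapses to $2\bigl(\partial_{x_0}g+\sum_{j=1}^m e_j\partial_{x_j}g\bigr)=2\,\mathcal{D}g$. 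Hence $\Delta_{\mathbb{R}^{m+1}}(x\,g)=2\,\mathcal{D}g+x\,\Delta_{\mathbb{R}^{m+1}}g$.

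Now I apply this with $g=\mathcal{Q}_{k-1}^m$. By \eqref{monoGCK}, $\mathcal{Q}_{k-1}^m=GCK[x_0^{k-1}]$ is axially monogenic, so $\mathcal{D}\mathcal{Q}_{k-1}^m=0$ and consequently $\Delta_{\mathbb{R}^{m+1}}\mathcal{Q}_{k-1}^m=\overline{\mathcal{D}}\,\mathcal{D}\,\mathcal{Q}_{k-1}^m=0$; both terms above vanish, giving $\Delta_{\mathbb{R}^{m+1}}(x\,\mathcal{Q}_{k-1}^m)=0$. The same reasoning gives $\Delta_{\mathbb{R}^{m+1}}\mathcal{Q}_k^m=0$. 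Applying $\Delta_{\mathbb{R}^{m+1}}$ to \eqref{rell} then yields
$$(m-1)\,\Delta_{\mathbb{R}^{m+1}}\mathcal{P}_k^m=(k+m-1)\,\Delta_{\mathbb{R}^{m+1}}\mathcal{Q}_k^m-k\,\Delta_{\mathbb{R}^{m+1}}(x\,\mathcal{Q}_{k-1}^m)=0.$$
Since the coefficients $T_s^k(m)$ contain $(m-1)_k$ in the denominator, the polynomials $\mathcal{P}_k^m$ are defined only for $m\ge 2$, so $m-1\neq 0$ and I conclude $\Delta_{\mathbb{R}^{m+1}}\mathcal{P}_k^m=0$.

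The only genuine obstacle is the Clifford-algebraic identity $\Delta_{\mathbb{R}^{m+1}}(x\,g)=2\,\mathcal{D}g+x\,\Delta_{\mathbb{R}^{m+1}}g$: the non-commutative factors must be kept in the correct order, which is precisely why I work through the scalar Laplacian $\Delta_{\mathbb{R}^{m+1}}$ (whose product rule is order-preserving) rather than factoring through $\mathcal{D}$ and $\overline{\mathcal{D}}$ separately. Everything else — the harmonicity of the $\mathcal{Q}_k^m$, the verification that $\mathcal{P}_k^m$ has axial form, and the final bookkeeping with \eqref{rell} — is routine.
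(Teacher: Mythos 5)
Your proof is correct and follows essentially the same route as the paper: both reduce harmonicity of $\mathcal{P}_k^m$ to the relation \eqref{rell} with the Clifford-Appell polynomials, then apply the identity $\Delta_{\mathbb{R}^{m+1}}(x\,g)=x\,\Delta_{\mathbb{R}^{m+1}}g+2\,\mathcal{D}g$ together with the monogenicity of $\mathcal{Q}_k^m$ and $\mathcal{Q}_{k-1}^m$. The only differences are cosmetic — you derive that identity from the scalar Leibniz rule where the paper cites it as well known, and you check the axial form directly on the monomials $x^{k-s}\bar{x}^{s}$ rather than inferring it from \eqref{rell}.
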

\begin{proof}
By the fact that the Clifford-Appell polynomials are of axial type and by \eqref{rell} we get that also the polynomials  $  \mathcal{P}_k^m(x)$ are of axial type. Now, we have to show that  $  \mathcal{P}_k^m(x)$ are harmonic. 
By the following well-known identity $\Delta_{\mathbb{R}^{m+1}}(xf(x))=x\Delta_{\mathbb{R}^{m+1}} f(x)+2 \mathcal{D}f(x)$, For any function $f$ that possesses continuous second-order derivatives, Proposition \ref{relll} and using the fact that the Clifford-Appell polynomials are monogenic (and so in particular harmonic) we have
\begin{eqnarray*}
\Delta_{\mathbb{R}^{m+1}} \mathcal{P}_k^m(x)&=& \frac{k+m-1}{m-1} \Delta_{\mathbb{R}{^{m+1}}} \mathcal{Q}_{k}^m(x)- \frac{k}{m-1}\Delta_{\mathbb{R}{^{m+1}}} \left(x \mathcal{Q}_{k-1}^m(x)\right)\\
&=& \frac{k+m-1}{m-1} \Delta_{\mathbb{R}{^{m+1}}} \mathcal{Q}_{k}^m(x)- \frac{k}{m-1}\Delta_{\mathbb{R}{^{m+1}}} \mathcal{Q}_{k-1}^m(x)- \frac{2k}{m-1} \mathcal{D}\mathcal{Q}_{k-1}^m(x)\\
&=&0.
\end{eqnarray*}

\end{proof}
Now, we establish a strong connection between the polynomials \( \mathcal{P}_k^m(x) \) and the following function:

\begin{equation}
	\label{rr}
	p(x) = |1-x|^{-(m-1)}, \qquad m \in \mathbb{N}, \qquad x \in \mathbb{R}^{m+1}.
\end{equation}

This function is widely known in the literature as the "Riesz potential", see \cite{M}.

\begin{thm}
	\label{expp}
Let $k$, $m \in \mathbb{N}$ and $x \in \mathbb{R}^{m+1}$. Then the Riesz potential admits the following expansion in series
$$ p(x)=\sum_{k=0}^\infty \frac{(m-1)_k}{k!}  \mathcal{P}_k^m(x), \qquad |x| <1.$$
\end{thm}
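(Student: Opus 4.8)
The plan is to recognize the weighted sum over $k$ as a \emph{Cauchy product} of two generalized binomial series, one in $x$ and one in $\bar x$, and then to collapse that product using the identity $(1-x)(1-\bar x)=|1-x|^2$. First I would substitute the explicit coefficient $T_s^k(m)=\binom{k}{s}\tfrac{(\frac{m-1}{2})_{k-s}(\frac{m-1}{2})_{s}}{(m-1)_k}$ from Definition \ref{polyH} and absorb the normalizing factor $(m-1)_k/k!$. The factor $(m-1)_k$ cancels, and using $\tfrac{1}{k!}\binom{k}{s}=\tfrac{1}{s!\,(k-s)!}$ one obtains the clean factored form
$$\frac{(m-1)_k}{k!}\,\mathcal{P}_k^m(x)=\sum_{s=0}^{k}\frac{\left(\frac{m-1}{2}\right)_{k-s}}{(k-s)!}\,x^{k-s}\;\frac{\left(\frac{m-1}{2}\right)_{s}}{s!}\,\bar x^{s}.$$
Summing over $k$ and reindexing with $a=k-s$, $b=s$ then exhibits the series as
$$\sum_{k=0}^\infty\frac{(m-1)_k}{k!}\,\mathcal{P}_k^m(x)=\left(\sum_{a=0}^\infty\frac{\left(\frac{m-1}{2}\right)_a}{a!}\,x^{a}\right)\left(\sum_{b=0}^\infty\frac{\left(\frac{m-1}{2}\right)_b}{b!}\,\bar x^{b}\right).$$

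The algebraic fact that legitimizes this factorization is that $x$ and $\bar x$ commute: since $x\bar x=\bar x x=|x|^2$, the subalgebra they generate is commutative. More precisely, writing $\underline\omega=\underline x/|\underline x|$ with $\underline\omega^2=-1$, one has $x=x_0+|\underline x|\,\underline\omega$, so this subalgebra is isomorphic to $\mathbb{C}$ via $x\leftrightarrow z:=x_0+i|\underline x|$ and $\bar x\leftrightarrow \bar z$, and on it the Clifford norm coincides with the complex modulus, giving $|x^a|=|x|^a$. Each factor is then evaluated by the generalized binomial theorem $(1-t)^{-\alpha}=\sum_{n\ge0}\frac{(\alpha)_n}{n!}t^n$ with $\alpha=\frac{m-1}{2}$, producing $(1-x)^{-\frac{m-1}{2}}(1-\bar x)^{-\frac{m-1}{2}}$. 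Finally, using $x+\bar x=2x_0$ and $x\bar x=|x|^2$,
$$(1-x)(1-\bar x)=1-(x+\bar x)+x\bar x=1-2x_0+|x|^2=|1-x|^2,$$
so the product equals $|1-x|^{-(m-1)}=p(x)$, as claimed. (Note that no oddness of $m$ is needed, since the binomial exponent $\frac{m-1}{2}$ may be any real number.)

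The main obstacle, and the step deserving genuine care rather than routine manipulation, is the \emph{convergence and rearrangement} underlying the Cauchy product. Because $|x^a|=|x|^a$, each of the two series converges absolutely for $|x|<1$ by comparison with the scalar series $\sum_n\frac{(\frac{m-1}{2})_n}{n!}|x|^n$, which has radius of convergence $1$; the series in $\bar x$ converges absolutely as well since $|\bar x|=|x|$. Absolute convergence of both factors is exactly what is required to rearrange the double sum and to identify it with the product of the two sums. The cleanest way to package all of this is to carry out the entire computation inside the commutative subalgebra generated by $x$, which is isometrically isomorphic to $\mathbb{C}$ as above; under this identification the claimed expansion is literally the classical complex identity $|1-z|^{-(m-1)}=(1-z)^{-\frac{m-1}{2}}(1-\bar z)^{-\frac{m-1}{2}}$, and the equality $|1-z|=|1-x|$ returns the statement for $|x|<1$.
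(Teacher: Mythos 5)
Your proposal is correct and follows essentially the same route as the paper: both rest on the factorization $p(x)=(1-x)^{-\frac{m-1}{2}}(1-\bar x)^{-\frac{m-1}{2}}$, the generalized binomial series, and the Cauchy product identity $\frac{(m-1)_k}{k!}T_s^k(m)=\frac{(\frac{m-1}{2})_{k-s}}{(k-s)!}\frac{(\frac{m-1}{2})_s}{s!}$, with yours merely run in the opposite direction (from the series to the potential). Your added care about the commutativity of $x$ and $\bar x$ and the absolute convergence justifying the rearrangement is a welcome tightening of details the paper leaves implicit, but it does not change the argument.
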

\begin{proof}
We start observing that we can write the function $p(x)$ as
$$ p(x)= (1-x)^{- \frac{m-1}{2}} (1- \bar{x})^{- \frac{m-1}{2}}.$$
If $|x| <1$, from the following well-known equality 
$$ (1-t)^{-s}= \sum_{j=0}^{\infty} \frac{(s)_j}{j!}  t^{j}, \qquad |t| <1,$$
we get
$$ p(x)= \left(\sum_{k=0}^\infty \frac{\left(\frac{m-1}{2}\right)_k}{k!} \bar{x}^k\right) \left(\sum_{k=0}^\infty \frac{\left(\frac{m-1}{2}\right)_k}{k!} x^k\right).$$
By the Cauchy product of series we obtain
\begin{eqnarray}
	\nonumber
	p(x) &=& \sum_{k=0}^\infty \sum_{s=0}^k \frac{\left(\frac{m-1}{2}\right)_{k-s}}{(k-s)!} \frac{\left(\frac{m-1}{2}\right)_s}{s!} x^{k-s} \bar{x}^{s}\\
	\nonumber
	&=& \sum_{k=0}^\infty \frac{(m-1)_k}{k!} \left(\sum_{s=0}^k \binom{k}{s} \frac{\left(\frac{m-1}{2}\right)_{k-s}\left(\frac{m-1}{2}\right)_s}{(m-1)_k} x^{k-s} \bar{x}^s\right)\\
	\label{f1}
	&=& \sum_{k=0}^\infty \frac{(m-1)_k}{k!}  \mathcal{P}_k^m(x).
\end{eqnarray}
This proves the result.
\end{proof}

Now, we demonstrate that the polynomials \( \mathcal{P}_k^m(x) \) have a deep connection with the Gegenbauer polynomials, which are defined as:

$$ C_k^{(\mu)}(x)= \sum_{\ell=\left[\frac{k}{2}\right]}^{k} \binom{-k}{\ell} \binom{\ell}{2\ell-k} (-2x)^{2\ell-k}, \quad x \in \mathbb{R}^{n+1},$$
with $k \in \mathbb{N}$ and $\mu >-1$, see \cite{S39}.
\begin{prop}
Let $x \in \mathbb{R}^{m+1}$. Then for $k \in \mathbb{N}$ we have
$$ \mathcal{P}_k^m(x)= \frac{k!}{(m-1)_k}|x|^k C_k^{\frac{m-1}{2}} \left(\frac{x}{|x|}\right), \quad |x|<1.$$
\end{prop}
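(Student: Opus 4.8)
\emph{Proof proposal.} The plan is to read the identity off from two generating-function expansions of the Riesz potential \eqref{rr}. On the one hand, Theorem \ref{expp} expands $p(x)=|1-x|^{-(m-1)}$ in terms of the polynomials $\mathcal{P}_k^m$; on the other hand, the classical generating function of the Gegenbauer polynomials,
$$(1-2wt+t^2)^{-\mu}=\sum_{k=0}^\infty C_k^{(\mu)}(w)\,t^k, \qquad |t|<1,$$
expands the same object once $p$ is recognized as an ultraspherical kernel. Matching the two expansions term by term should produce the formula.

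First I would record the algebraic fact that $x$ and $\bar{x}$ commute and satisfy $x\bar{x}=|x|^2$, so that for a real parameter $t$ one has the scalar factorization
$$(1-tx)(1-t\bar{x})=1-t(x+\bar{x})+t^2 x\bar{x}=1-2t x_0+t^2|x|^2.$$
Using the homogeneity $\mathcal{P}_k^m(tx)=t^k\mathcal{P}_k^m(x)$ together with Theorem \ref{expp} applied to $tx$ (valid for $|t|\,|x|<1$), this gives
$$\sum_{k=0}^\infty \frac{(m-1)_k}{k!}\,\mathcal{P}_k^m(x)\,t^k = p(tx)=\bigl(1-2t x_0+t^2|x|^2\bigr)^{-\frac{m-1}{2}}.$$
Next I would substitute $\mu=\frac{m-1}{2}$, $w=\frac{x_0}{|x|}$ and $s=t|x|$ into the Gegenbauer generating function, turning the right-hand side into
$$\bigl(1-2ws+s^2\bigr)^{-\frac{m-1}{2}}=\sum_{k=0}^\infty C_k^{\frac{m-1}{2}}\!\left(\frac{x_0}{|x|}\right)|x|^k\,t^k.$$
Since both sides are convergent power series in $t$ on a common disc, comparing coefficients of $t^k$ yields $\frac{(m-1)_k}{k!}\mathcal{P}_k^m(x)=C_k^{\frac{m-1}{2}}\!\left(\frac{x_0}{|x|}\right)|x|^k$, which is the claimed identity after rearranging.

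The one point that genuinely requires care — and which I regard as the main subtlety, rather than the computation — is the meaning of the argument of the Gegenbauer polynomial. A short calculation based on the symmetry $T_s^k(m)=T_{k-s}^k(m)$ of the coefficients in Definition \ref{polyH}, combined with $\overline{x^{k-s}\bar{x}^s}=x^s\bar{x}^{k-s}$, shows that $\overline{\mathcal{P}_k^m}=\mathcal{P}_k^m$, so the polynomials $\mathcal{P}_k^m$ are in fact scalar-valued. Hence the relevant argument above is the scalar part $\frac{x_0}{|x|}$ of the unit paravector $\frac{x}{|x|}$, i.e. the cosine of the angle between $x$ and the real axis, and the coefficient matching then delivers precisely the stated formula. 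A secondary technical point is to confirm that the domains of validity coincide, namely that the expansion of Theorem \ref{expp} and the Gegenbauer series both converge for $|x|<1$, so that the term-by-term comparison is legitimate.
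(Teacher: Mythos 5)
Your proposal is correct and follows essentially the same route as the paper: both rest on comparing the expansion of the Riesz potential \eqref{rr} from Theorem \ref{expp} with its Gegenbauer expansion \eqref{exx} and identifying the homogeneous components of degree $k$. The difference is that the paper simply cites \cite{GHS} for \eqref{exx}, whereas you derive it from the classical generating function $(1-2wt+t^2)^{-\mu}=\sum_k C_k^{(\mu)}(w)t^k$ via the factorization $|1-tx|^2=(1-tx)(1-t\bar x)=1-2tx_0+t^2|x|^2$ and the homogeneity $\mathcal{P}_k^m(tx)=t^k\mathcal{P}_k^m(x)$; this makes the coefficient comparison self-contained and rigorous. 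Your observation that $\mathcal{P}_k^m$ is scalar-valued (via $T_s^k(m)=T_{k-s}^k(m)$ and self-conjugacy) and that the Gegenbauer argument must therefore be read as the scalar $x_0/|x|$ rather than the full unit paravector $x/|x|$ is a genuine clarification the paper glosses over --- indeed for $k=1$ one has $\mathcal{P}_1^m(x)=x_0=\tfrac{1}{m-1}|x|\,C_1^{\frac{m-1}{2}}(x_0/|x|)$, while the literal paravector substitution would give $x$ --- so your reading is the one under which the stated identity actually holds.
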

\begin{proof}
	By \cite{GHS} we deduce that the function in \eqref{rr} has the following expansion in series
\begin{equation}
\label{exx}
p(x)= \sum_{k=0}^{\infty}C_{k}^{(\frac{m-1}{2})}\left(\frac{x}{|x|}\right)|x|^k, \qquad |x| <1.
\end{equation}
By equating \eqref{exx} with the expansion in series of the Riesz potential $p(x)$ in terms of the polynomials $ \mathcal{P}_k^m(x)$, see Theorem \ref{expp}, we get the result. 

\end{proof}

The coefficients of the polynomials $ \mathcal{P}_k^m(x)$ exhibit specific properties, as outlined in the following result, which will be useful in the subsequent sections.

\begin{lemma}
	\label{sum}
Let us consider $k \in \mathbb{N}$ and $m \geq 3$. The coefficients of the polynomials $ \mathcal{P}_k^m(x)$ satisfy the following properties:
	\begin{itemize}
		\item[1)] $T_{s}^k(m)=T_{k-s}^k(m), \qquad 0 \leq s \leq k,$
		\item[2)]  $ \sum_{s=0}^k T_s^k(m)=1.$
	\end{itemize}  
\end{lemma}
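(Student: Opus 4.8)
The plan is to verify both properties directly from the closed form
$$
T_s^k(m)= \binom{k}{s}\frac{\left(\frac{m-1}{2}\right)_{k-s}\left(\frac{m-1}{2}\right)_{s}}{(m-1)_k}.
$$
For the symmetry property 1), I would simply substitute $s \mapsto k-s$ in this expression. Since $\binom{k}{s}=\binom{k}{k-s}$ and the product $\left(\frac{m-1}{2}\right)_{k-s}\left(\frac{m-1}{2}\right)_{s}$ is visibly invariant under the exchange $s \leftrightarrow k-s$, while the denominator $(m-1)_k$ does not depend on $s$, the identity $T_s^k(m)=T_{k-s}^k(m)$ follows at once.

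For property 2), pulling out the $s$-independent denominator gives
$$
\sum_{s=0}^{k}T_s^k(m)=\frac{1}{(m-1)_k}\sum_{s=0}^{k}\binom{k}{s}\left(\frac{m-1}{2}\right)_{k-s}\left(\frac{m-1}{2}\right)_{s}.
$$
The key step is the Chu--Vandermonde convolution for rising factorials, namely $\sum_{s=0}^{k}\binom{k}{s}(a)_{k-s}(b)_s=(a+b)_k$, which expresses that $(x)_k$ is a polynomial sequence of binomial type (and which follows, e.g., from $(a)_k=\Gamma(a+k)/\Gamma(a)$ together with the binomial expansions of $(1-t)^{-a}(1-t)^{-b}=(1-t)^{-(a+b)}$ and comparison of Taylor coefficients). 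Taking $a=b=\frac{m-1}{2}$ turns the sum into $\left(\frac{m-1}{2}+\frac{m-1}{2}\right)_k=(m-1)_k$, so the quotient equals $1$. Since $m\geq 3$ guarantees $(m-1)_k\neq 0$, the division is legitimate.

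Alternatively --- and entirely within the framework already built --- I would obtain property 2) by restricting the expansion of Theorem \ref{expp} to the real axis. Setting $\underline{x}=0$ and $x=x_0=t$ with $0<t<1$ gives $\bar x = x = t$, hence $\mathcal{P}_k^m(t)=t^k\sum_{s=0}^{k}T_s^k(m)$ and $p(t)=(1-t)^{-(m-1)}=\sum_{k=0}^{\infty}\frac{(m-1)_k}{k!}t^k$. Comparing this with $p(t)=\sum_{k=0}^{\infty}\frac{(m-1)_k}{k!}\mathcal{P}_k^m(t)$ from Theorem \ref{expp} and matching the coefficient of $t^k$ yields $\sum_{s=0}^{k}T_s^k(m)=1$; this is not circular, since Theorem \ref{expp} itself does not use property 2).

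The computations are routine along both routes, so the only genuine content is the Chu--Vandermonde convolution in the first approach. The main (minor) obstacle is therefore just to invoke or re-derive that classical identity, which the second approach conveniently sidesteps by leveraging the Riesz-potential expansion already established.
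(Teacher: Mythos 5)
Your proposal is correct and follows essentially the same route as the paper: part 1) is proved identically by the visible symmetry of the closed form, and for part 2) the Gamma-function identity the paper invokes, namely $\sum_{s=0}^{k}\binom{k}{s}\Gamma(\ell+1+k-s)\Gamma(\ell+1+s)=\Gamma(2\ell+k+2)\big/\bigl((\ell+1)\binom{2\ell+1}{\ell+1}\bigr)$ with $\ell=\tfrac{m-3}{2}$, is precisely the Chu--Vandermonde convolution $\sum_{s}\binom{k}{s}(a)_{k-s}(a)_s=(2a)_k$ written out in Gamma notation. Your alternative derivation via the restriction of Theorem \ref{expp} to the real axis is also valid and non-circular, though it amounts to the generating-function proof of the same convolution.
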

\begin{proof}
	\begin{itemize}
		\item[1)] Form the definition of pochhammer symbol can write the coeffients of the polynomial $  \mathcal{P}_k^m(x)$ as
		\begin{equation}
			\label{pochh}
			T_s^k(m) =\binom{k}{s} \frac{\Gamma(m-1)}{\Gamma(m-1+k)} \frac{\Gamma\left( \frac{m-1}{2}+k-s\right) \Gamma \left( \frac{m-1}{2}+s\right)}{\left[ \Gamma \left( \frac{m-1}{2}\right)\right]^2}.
		\end{equation}
		Therefore, we have
$$
			T_{k-s}^k(m)= \binom{k}{k-s} \frac{\Gamma(m-1)}{\Gamma(m-1+k)} \frac{ \Gamma \left( \frac{m-1}{2}+s\right)\Gamma\left( \frac{m-1}{2}+k-s\right)}{\left[ \Gamma \left( \frac{m-1}{2}\right)\right]^2}= T_s^k(m).
$$
		\item[2)] By formula \eqref{pochh} we have
		$$ \sum_{s=0}^k T_s^k(m)=\frac{\Gamma(m-1)}{\Gamma(m-1+k)\left[ \Gamma \left( \frac{m-1}{2}\right)\right]^2} \sum_{s=0}^{k} \binom{k}{s} \Gamma\left( \frac{m-1}{2}+k-s\right) \Gamma \left( \frac{m-1}{2}+s\right).$$
		Now, by using the following equality
		$$ \sum_{s=0}^{k} \binom{k}{s} \Gamma( \ell+1+k-s) \Gamma( \ell+1+s)= \frac{\Gamma(2 \ell+k+2)}{(\ell+1) \binom{2 \ell +1}{\ell+1}}, \qquad \ell \geq 1,$$
with $ \ell= \frac{m-3}{2}$, we obtain
		\begin{eqnarray*}
			\sum_{s=0}^k T_s^k(m) &=& \frac{\Gamma(m-1)}{\Gamma(m-1+k)\left[ \Gamma \left( \frac{m-1}{2}\right)\right]^2} \frac{\Gamma(m-1+k)}{ \left( \frac{m-1}{2}\right) \binom{m-2}{\frac{m-1}{2}}}\\
			&=& \frac{\Gamma(m-1)}{\Gamma(m-1+k)\left[ \Gamma \left( \frac{m-1}{2}\right)\right]^2}  \frac{\Gamma(m-1+k)}{ \left( \frac{m-1}{2}\right) \frac{\Gamma(m-1)}{\Gamma\left(\frac{m-1}{2}\right) \Gamma \left( \frac{m+1}{2}\right)}}\\
			&=& \frac{\Gamma \left( \frac{m+1}{2}\right)}{ \left(\frac{m-1}{2}\right)  \Gamma \left( \frac{m-1}{2}\right)}\\
			&=&1.
		\end{eqnarray*}
	\end{itemize}
\end{proof}

Now, we have all the tools to figure out the application of the operator $ \Delta_{\mathbb{R}^{m+1}}^{\frac{m-3}{2}} \mathcal{D}$ to a monomial $x^k$

\begin{thm}
\label{polyn}
Let $m \geq 3$, being odd, and $k \geq m-2$ then we have
\begin{eqnarray}
\label{t1}
\Delta_{\mathbb{R}^{m+1}}^{\frac{m-3}{2}} 	\mathcal{D} x^k&=&\gamma_{m} \frac{k!}{(k-m+2)!} HGCK(x_0^{k-m+2},0)\\
\label{t2}
&=&  \gamma_{m} \frac{k!}{(k-m+2)!} \mathcal{P}_{k-m+2}^m(x).
\end{eqnarray}
\end{thm}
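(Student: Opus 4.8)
The plan is to establish \eqref{t1} and \eqref{t2} separately: the first is a direct instance of Theorem \ref{Sceconn}, while the second is a uniqueness argument for the harmonic CK-extension.

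For \eqref{t1} I would take $f(z)=z^k$. This is intrinsic holomorphic, since its restriction to the real line is the real-valued monomial $x_0^k$, and its slice extension is precisely $f(x_0+\underline{x})=(x_0+\underline{x})^k=x^k$. Applying Theorem \ref{Sceconn} then gives $\Delta_{\mathbb{R}^{m+1}}^{\frac{m-3}{2}}\mathcal{D}\,x^k=\gamma_m\, HGCK[(f^{(m-2)}(x_0),0)]$. Because $k\geq m-2$, the $(m-2)$-th derivative is the genuine monomial $f^{(m-2)}(x_0)=\frac{k!}{(k-m+2)!}\,x_0^{k-m+2}$, and pulling this scalar factor out of the linear first slot of $HGCK$ yields \eqref{t1}.

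It then remains, for \eqref{t2}, to identify $HGCK[x_0^{n},0]$ with $\mathcal{P}_n^m(x)$, where $n:=k-m+2$. I would prove this through the uniqueness part of Theorem \ref{GCK}: an axially harmonic function is completely determined by the pair $(A_0,A_1)=\bigl(f|_{\underline{x}=0},\,-\tfrac1m\partial_{\underline{x}}f|_{\underline{x}=0}\bigr)$ coming from \eqref{ini}. By construction $HGCK[x_0^{n},0]$ carries the data $(x_0^{n},0)$, and $\mathcal{P}_n^m$ is already known to be axially harmonic and of axial type, so the whole matter reduces to computing the two restrictions of $\mathcal{P}_n^m$. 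At $\underline{x}=0$ one has $x=\bar{x}=x_0$, so $\mathcal{P}_n^m|_{\underline{x}=0}=\bigl(\sum_{s=0}^{n}T_s^n(m)\bigr)x_0^{n}=x_0^{n}$ by part 2) of Lemma \ref{sum}.

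The computation of the second datum is the main obstacle, and amounts to showing $-\tfrac1m\partial_{\underline{x}}\mathcal{P}_n^m|_{\underline{x}=0}=0$. For this I would first observe that $\mathcal{P}_n^m$ is in fact scalar-valued: using the symmetry $T_s^n(m)=T_{n-s}^n(m)$ from part 1) of Lemma \ref{sum}, I pair the terms $s$ and $n-s$, and since $x,\bar{x}$ commute (with $x\bar{x}=|x|^2$ real) each pair $T_s^n(m)\bigl(x^{n-s}\bar{x}^{s}+x^{s}\bar{x}^{n-s}\bigr)$ equals twice the real part of an element of the plane $\mathbb{C}_{\underline{\omega}}$, hence is a scalar. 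Consequently $\mathcal{P}_n^m$ depends on $\underline{x}$ only through $|\underline{x}|^{2}$, and since $\partial_{\underline{x}}|\underline{x}|^{2j}=2j\,|\underline{x}|^{2(j-1)}\underline{x}$ vanishes at $\underline{x}=0$ for every $j\geq 1$, the second datum is $0$. With both initial data matching $(x_0^{n},0)$, uniqueness forces $\mathcal{P}_n^m=HGCK[x_0^{n},0]$, which gives \eqref{t2}. As an alternative to this route one could instead combine $HGCK[x_0^{n},0]=[GCK[x_0^{n}]]_0=[\mathcal{Q}_n^m]_0$ (from \eqref{H1} and \eqref{monoGCK}) with the Pochhammer identity $2\,T_s^n(m)=\mathcal{T}_s^n(m)+\mathcal{T}_{n-s}^n(m)$, which follows from $(\tfrac{m+1}{2})_j=\tfrac{m-1+2j}{m-1}(\tfrac{m-1}{2})_j$.
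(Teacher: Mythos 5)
Your proof is correct, and its skeleton coincides with the paper's: identity \eqref{t1} is obtained exactly as in the paper by applying Theorem \ref{Sceconn} to the intrinsic function $z^k$ and using $\partial_{x_0}^{m-2}x_0^k=\frac{k!}{(k-m+2)!}x_0^{k-m+2}$, and \eqref{t2} is reduced to checking that $\mathcal{P}_n^m$ (with $n=k-m+2$) carries the initial data $(x_0^n,0)$ and invoking the uniqueness in Theorem \ref{GCK}. Where you genuinely diverge is in the verification that the second datum $-\frac{1}{m}\partial_{\underline{x}}\mathcal{P}_n^m|_{\underline{x}=0}$ vanishes. The paper splits $\mathcal{P}_n^m$ into two sums, expands $x^{n-2s}$ and $\bar{x}^{2s-n}$ by the binomial theorem, computes $\partial_{\underline{x}}$ term by term, and only at the very end uses the symmetry $T_s^n=T_{n-s}^n$ to show that $\sum_j T_j^n(m)(n-2j)=0$. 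You instead use that same symmetry at the outset to pair the terms $s$ and $n-s$ and conclude that $\mathcal{P}_n^m=\sum_{s\le n/2}T_s^n(m)\,|x|^{2s}\bigl(x^{n-2s}+\bar{x}^{n-2s}\bigr)$ (up to the middle term) is scalar-valued and in fact a polynomial in $x_0$ and $|\underline{x}|^2$, after which $\partial_{\underline{x}}\mathcal{P}_n^m|_{\underline{x}=0}=0$ is immediate from $\partial_{\underline{x}}|\underline{x}|^{2j}=2j\,|\underline{x}|^{2(j-1)}\underline{x}$. This is cleaner and sidesteps the paper's delicate bookkeeping of the constants $c(\ell,m)$ (where the published computation in fact contains a typographical slip in the factor $c(1,m)$); as a bonus it yields directly the real-valuedness of $\mathcal{P}_n^m$, which the paper only derives afterwards as a separate proposition. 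Your alternative route via $HGCK[x_0^n,0]=[GCK[x_0^n]]_0=[\mathcal{Q}_n^m]_0$ together with the identity $2T_s^n(m)=\mathcal{T}_s^n(m)+\mathcal{T}_{n-s}^n(m)$ is also correct (the Pochhammer computation checks out) and essentially anticipates the identification $H_n^0=[\mathcal{Q}_n^m]_0$ proved later in Theorem \ref{basis}.
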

\begin{proof}
Formula \eqref{t1} follows by applying Theorem \ref{Sceconn} and the identity $ \partial_{x_0}^{m-2}x_0^k= \frac{(k-m+2)!}{k!} x_0^{k-m+2}$. Now in order to have the second equality we have to prove that
$$HGCK(x_0^{k-m+2},0)=\mathcal{P}_{k-m+2}^m(x).$$
We have to look for the initial functions $A_0(x_0)$ and $A_1(x_0)$ for the harmonic generalized CK extension, see Theorem \ref{GCK}. The computations of the first initial function follows from the properties of the coefficients of $ \mathcal{P}_k^m(x)$. By the second point of Lemma \ref{sum} we have
$$ \mathcal{P}_{k-m+2}(x_0,0)=\sum_{j=0}^{k-m+2} T_j^{k-m+2}(m) x_{0}^{k-m+2}=x_0^{k-m+2}.$$
To compute the other initial function $A_1(x_0)$ we need to apply the operator $\partial_{\underline{x}}$ to the polynomials $ \mathcal{P}_{k-m+2}^m(x)$. In order to do this we divide the polynomials $ \mathcal{P}_k^m(x)$ in two pieces: 
\begin{eqnarray}
\nonumber
 \mathcal{P}_k^m(x)&=& \sum_{s=0}^{\floor*{\frac{k}{2}}} T_s^k(m) x^{k-s} \bar{x}^{s}+ \sum_{s= \ceil*{\frac{k}{2}}}^k T_s^k(m) x^{k-s} \bar{x}^s\\
\nonumber
&=& \sum_{s=0}^{\floor*{\frac{k}{2}}}T_s^k(m) x^{k-2s}x^s\bar{x}^{s}+ \sum_{s= \ceil*{\frac{k}{2}}}^k T_s^k(m) x^{k-s} \bar{x}^{k-s} \bar{x}^{2s-k}\\
\label{f2}
&=& \sum_{s=0}^{\floor*{\frac{k}{2}}} T_s^k(m) x^{k-2s}|x|^{2s}+ \sum_{s=\ceil*{\frac{k}{2}}}^k T_s^k(m) |x|^{2(k-s)} \bar{x}^{2s-k}.
\end{eqnarray}
By the binomial theorem we have
\begin{equation}
\label{f3}
 x^{k-2s}=(x_0+ \underline{x})^{k-2s}= \sum_{j=0}^{k-2s} \binom{k-2s}{j} \underline{x}^{j} x_0^{k-2s-j}, \quad
\end{equation}
\begin{equation}
\label{f4}
\bar{x}^{2s-k}=(x_0- \underline{x})^{2s-k}= \sum_{j=0}^{2s-k} \binom{2s-k}{j} (-\underline{x})^j x_0^{2s-k-j}.
\end{equation}
By the fact that
$$ \partial_{\underline{x}} \underline{x}^\ell=c(\ell,m) \underline{x}^{\ell-1}, \qquad
\begin{cases}
-2\ell  \qquad \ell \, \, \hbox{even}\\
-(2\ell+m) \qquad \ell \, \, \hbox{odd}.
\end{cases}
$$
and \eqref{f3} and \eqref{f4} we have
\begin{eqnarray}
\label{fr}
\partial_{\underline{x}} \mathcal{P}_k^m(x)&=& \sum_{s=0}^{\floor*{\frac{k}{2}}}T_s^k(m) \left( \sum_{j=1}^{k-2s} \binom{k-2s}{j}c(j,m) \underline{x}^{j-1} x_0^{k-2s-j}\right)(x_0^2+|\underline{x}|^2)^s\\
\nonumber
&&+\sum_{s=0}^{\floor*{\frac{k}{2}}}T_s^k(m) \left( \sum_{j=0}^{k-2s} \binom{k-2s}{j} \underline{x}^{j} x_0^{k-2s-j}\right) \left(\sum_{\alpha=1}^{s} \binom{s}{\alpha} x_0^{2s-2\alpha} c(2\alpha, m) (-\underline{x})^{2\alpha-1} \right)\\
\nonumber
&&+ \sum_{s=\ceil*{\frac{k}{2}}}^k T_s^k(m) \left( \sum_{j=1}^{2s-k} \binom{2s-k}{j} c(j,m) (-1)^j \underline{x}^{j-1} x_0^{2s-k-j}\right)(x_0^2+|\underline{x}|^{2})^{k-s}\\
\nonumber
&&+\sum_{s=\ceil*{\frac{k}{2}}}^k T_s^k(m) \left( \sum_{j=1}^{2s-k} \binom{2s-k}{j} (-\underline{x})^{j} x_0^{2s-k-j}\right)\cdot\\
\nonumber
&& \, \, \, \, \, \, \cdot \left( \sum_{\beta=1}^{k-s} \binom{k-s}{\beta} x_0^{2(k-s-\beta)} c(2 \beta,m) (-\underline{x})^{2\beta-1}\right).
\end{eqnarray}
Thus by making the restriction of \eqref{fr} at $\underline{x}=0$ we get
\begin{eqnarray*}
\partial_{\underline{x}}[\mathcal{P}_k^m(x)] \biggl |_{\underline{x}=0}&=& c(1,m) \sum_{s=0}^{\floor*{\frac{k}{2}}}T_s^k(m) (k-2s)  x_0^{k-2s-1}x_0^{2s}-c(1,m)\sum_{s=\ceil*{\frac{k}{2}}}^k T_s^k(m) (2s-k)  x_0^{2s-k-1}x_0^{2k-2s}\\
&=& -(m+2)\left(\sum_{s=0}^{\floor*{\frac{k}{2}}}T_j^k(m) x_{0}^{k-1} (k-2s) + \sum_{s=\ceil*{\frac{k}{2}}}^{k}T_j^k(m) x_{0}^{k-1}(k-2s)\right)\\
&=&-(m+2) x_{0}^{k-1} \sum_{j=0}^{k} T_j^k(m) (k-2j).
\end{eqnarray*}
 Therefore by \eqref{ini} we have that
$$ A_{1}(x_0)= -\frac{1}{m}\partial_{\underline{x}}[\mathcal{P}_k^m(x)] \biggl |_{\underline{x}=0}=- \frac{m+2}{m}x_{0}^{k-1} \sum_{j=0}^k T_j^k(m) (k-2j).$$
Finally, by the first point of Lemma \ref{sum} we get
\begin{eqnarray*}
A_{1}(x_0)&=& - \frac{m+2}{m}x_{0}^{k-1}\sum_{j=0}^k T_j^k(m) (k-2j)\\
&=& - \frac{m+2}{m}x_{0}^{k-1} \bigl[(T_0^k(m)k-T_k^k(m) k)+ [T_1^k(m)(k-2)-T_{k-1}^k(m)(k-2)]+ \\
&& ...+(2T_{\floor{\frac{k}{2}-1}}^k(m)-2 T_{\ceil{\frac{k}{2}+1}}^k(m)) +T_{\left[\frac{k}{2}\right]}^k(m) \cdot 0 \biggl]\\
&=&0.
\end{eqnarray*}
This proves \eqref{t2}.
\end{proof}
Now, we show that the polynomials $ \mathcal{P}_k^m(x)$ can be  written in terms of integrals over the sphere $ \mathbb{S}^{m-1}$ involving functions of plane wave type. 
\begin{prop}
Let $k \in \mathbb{N}$ and $m \in \mathbb{N}$ being an odd number. Then we have
$$ \mathcal{P}_k^m(x)= HGCK[x_0^k,0]= \frac{\Gamma \left(\frac{m}{2}\right)}{2 \pi^{\frac{m}{2}}}  \int_{\mathbb{S}^{m-1}}\cosh\left(\langle \underline{x}, \underline{\omega} \rangle \underline{\omega} \partial_{x_0}\right) x_0^k dS_{\underline{\omega}}.$$
\end{prop}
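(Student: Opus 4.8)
The plan is to obtain both equalities as immediate consequences of results already proved in the paper, so that no new computation beyond a short bookkeeping step is needed.

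For the first equality $\mathcal{P}_k^m(x)=HGCK[x_0^k,0]$, I would argue through the uniqueness part of Theorem \ref{GCK} rather than redoing the series manipulation. Since the polynomials $\mathcal{P}_k^m(x)$ have already been shown to be axially harmonic, Theorem \ref{GCK} guarantees that $\mathcal{P}_k^m = HGCK[A_0,A_1]$ with the initial data recovered by $A_0(x_0)=\mathcal{P}_k^m(x)|_{\underline{x}=0}$ and $A_1(x_0)=-\tfrac{1}{m}\partial_{\underline{x}}[\mathcal{P}_k^m(x)]|_{\underline{x}=0}$, see \eqref{ini}. Evaluating at $\underline{x}=0$ collapses each term $x^{k-s}\bar{x}^{s}$ to $x_0^k$, so $A_0(x_0)=x_0^k\sum_{s=0}^{k}T_s^k(m)=x_0^k$ by part 2) of Lemma \ref{sum}. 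For $A_1$ one reproduces the computation carried out inside the proof of Theorem \ref{polyn}, which yields a factor proportional to $\sum_{s=0}^k T_s^k(m)(k-2s)$; this sum vanishes because the symmetry $T_s^k(m)=T_{k-s}^k(m)$ of part 1) of Lemma \ref{sum} pairs the term of index $s$ with that of index $k-s$ into $(k-2s)+(2s-k)=0$. Hence $A_1\equiv 0$ and $\mathcal{P}_k^m(x)=HGCK[x_0^k,0]$.

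For the second equality I would simply apply the plane-wave decomposition of Theorem \ref{plane} to the initial pair $(A_0,A_1)=(x_0^k,0)$. As $x_0^k$ is entire and real-analytic, the hypotheses of Theorem \ref{GCK}, and therefore of Theorem \ref{plane}, are satisfied; the $\sinh$-term drops out because $A_1\equiv 0$, leaving exactly $\frac{\Gamma(m/2)}{2\pi^{m/2}}\int_{\mathbb{S}^{m-1}}\cosh(\langle\underline{x},\underline{\omega}\rangle\underline{\omega}\partial_{x_0})x_0^k\,dS_{\underline{\omega}}$, which is the claimed formula. I do not expect a genuine obstacle: the statement is a corollary of Theorem \ref{polyn} (or of Theorem \ref{GCK}) together with Theorem \ref{plane}, and the only step needing care is confirming that the second initial datum of $\mathcal{P}_k^m$ vanishes, which is precisely where Lemma \ref{sum} is used.
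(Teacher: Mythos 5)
Your proposal is correct and follows essentially the same route as the paper: the paper's proof is literally ``combine Theorem \ref{polyn} and Theorem \ref{plane}'', where the identity $\mathcal{P}_k^m(x)=HGCK[x_0^k,0]$ is established inside the proof of Theorem \ref{polyn} exactly as you describe — via the uniqueness/recovery formulas of Theorem \ref{GCK}, with $A_0=x_0^k$ from part 2) of Lemma \ref{sum} and $A_1\equiv 0$ from the symmetry $T_s^k(m)=T_{k-s}^k(m)$ of part 1). The plane-wave formula then follows by feeding the pair $(x_0^k,0)$ into Theorem \ref{plane}, as you do.
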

\begin{proof}
The result follows by combining Theorem \ref{polyn} and Theorem \ref{plane}.
\end{proof}
Now we give the following important property for the harmonic polynomials $ \mathcal{P}_k^m(x)$.
\begin{prop}
Let $m\geq 3$ being odd, $ k \geq m-2$ and $x \in \mathbb{R}^{n+1}$. Then the polynomials $ \mathcal{P}_{k-m+2}^m(x)$ are real-valued.
\end{prop}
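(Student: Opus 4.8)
The plan is to use the identification $\mathcal{P}_{k-m+2}^m(x) = HGCK[x_0^{k-m+2},0]$ established just above (the Proposition preceding this one, together with Theorem \ref{polyn}), and then to read off reality directly from the explicit series for the harmonic CK-extension when the second datum vanishes.

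First I would recall that, specializing formula \eqref{harmo} (equivalently the first sum in \eqref{star1}) to $A_1 \equiv 0$, the extension collapses to
\begin{equation*}
HGCK[A_0,0] = \Gamma\!\left(\tfrac{m}{2}\right) \sum_{j=0}^\infty \frac{(-1)^j |\underline{x}|^{2j}}{2^{2j}\, j!\, \Gamma\!\left(\tfrac{m}{2}+j\right)}\, \partial_{x_0}^{2j}[A_0](x_0),
\end{equation*}
that is, a series in which only even powers of $\underline{x}$ occur and the vector factor $\underline{x}$ multiplying $J_{\frac{m}{2}}$ is absent. Setting $A_0(x_0) = x_0^{k-m+2}$, each summand is the product of the nonnegative real scalar $|\underline{x}|^{2j}$, a real constant, and the real polynomial $\partial_{x_0}^{2j}[x_0^{k-m+2}]$. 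Since every term is $\mathbb{R}$-valued, the whole sum is $\mathbb{R}$-valued, which gives the claim.

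Alternatively I would argue purely at the level of Definition \ref{polyH}. Using that Clifford conjugation is an anti-automorphism with $\overline{\overline{x}} = x$ and that $x$ and $\overline{x}$ commute (both lie in $\mathrm{span}_{\mathbb{R}}\{1,\underline{x}\}$, where $\underline{x}^2 = -|\underline{x}|^2$ is scalar), one gets $\overline{x^{k-s}\overline{x}^{\,s}} = x^{s}\overline{x}^{\,k-s}$. Reindexing the defining sum and invoking the symmetry $T_s^k(m) = T_{k-s}^k(m)$ from Lemma \ref{sum} then yields $\overline{\mathcal{P}_{k-m+2}^m(x)} = \mathcal{P}_{k-m+2}^m(x)$. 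Because $\mathcal{P}_{k-m+2}^m$ is of axial type, $\mathcal{P}_{k-m+2}^m = A(x_0,r) + \underline{\omega} B(x_0,r)$ with $A$ scalar and $\underline{\omega} B$ a $1$-vector, and conjugation fixes the scalar part while reversing the sign of the $1$-vector part; the identity then forces $B \equiv 0$, so $\mathcal{P}_{k-m+2}^m = A$ is real-valued.

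The only point requiring care is the reduction to grades $0$ and $1$: one must observe that all products $x^{a}\overline{x}^{\,b}$ stay inside the real span of $1$ and $\underline{x}$, so that no grades higher than one appear and the axial decomposition genuinely splits $\mathcal{P}_{k-m+2}^m$ into a scalar plus a single $1$-vector term. Once this is in place the conjugation symmetry does all the work, and in the $HGCK$-based argument the very same fact is what guarantees that the $A_1 \equiv 0$ branch produces no $\underline{x}$-factor.
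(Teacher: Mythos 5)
Your proof is correct, but it takes a genuinely different route from the paper's. The paper argues through the factorized Fueter--Sce map: since $x^k$ is slice monogenic, its components $\alpha,\beta$ satisfy the Cauchy--Riemann system \eqref{CR1}, so by Lemma \ref{Newt} the $1$-vector part of $\mathcal{D}x^k$ vanishes and $\mathcal{D}x^k$ is real; the Laplacian is a real scalar operator, so $\Delta_{\mathbb{R}^{m+1}}^{\frac{m-3}{2}}\mathcal{D}x^k$ is real, and Theorem \ref{polyn} identifies this with a nonzero real multiple of $\mathcal{P}_{k-m+2}^m(x)$. Your first argument instead reads reality off the explicit $HGCK$ series: with $A_1\equiv 0$ only the $J_{\frac{m}{2}-1}$ branch of \eqref{harmo} survives, which is a series in $|\underline{x}|^{2j}\partial_{x_0}^{2j}$ with real coefficients acting on the real function $x_0^{k-m+2}$ --- this is clean and leans only on the identification $\mathcal{P}_j^m=HGCK[x_0^j,0]$ from the preceding proposition, so there is no circularity. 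Your second argument is the most economical of the three: it uses nothing beyond Definition \ref{polyH}, the symmetry $T_s^k(m)=T_{k-s}^k(m)$ of Lemma \ref{sum}, and the fact that all products $x^a\bar{x}^{\,b}$ lie in the commutative subalgebra $\mathrm{span}_{\mathbb{R}}\{1,\underline{x}\}$ on which Clifford conjugation acts by negating the $1$-vector part; the resulting identity $\overline{\mathcal{P}_k^m(x)}=\mathcal{P}_k^m(x)$ kills the $1$-vector component. As a bonus, this conjugation argument proves reality of $\mathcal{P}_k^m(x)$ for every $k\geq 0$ and every $m$, without the hypotheses ``$m$ odd'' and ``$k\geq m-2$'' that the paper (and your first argument) inherit from the Fueter--Sce identification. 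All three routes are sound; yours trade the analytic machinery for either the series form of the extension or a purely algebraic symmetry.
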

\begin{proof}
Let $ f(x_0+ \underline{x})=\alpha(x_0, r)+ \underline{\omega} \beta(x_0, r)$ be a slice monogenic function. By Theorem \ref{rapp} we know that $\alpha(x_0, \underline{x})$ and $\beta(x_0, \underline{x})$ satisfy the Cauchy-Riemann conditions, see \eqref{R1}.
Hence by Lemma \ref{Newt} we get that $ \mathcal{D} f(x_0+ \underline{x})$ is real. 
Now, since the monomial $x^k$ is slice monogenic for every $k \geq 0$ then $ \mathcal{D} x^k$ is real. By the fact that the Laplacian is a real-valued operator, the polynomials generated by $\Delta_{\mathbb{R}^{m+1}}^{\frac{m-3}{2}} \mathcal{D} x^k$ are real-valued. Therefore by Theorem \ref{polyn} we get that the polynomials $\mathcal{P}_{k-m+2}(x)$ are real-valued, too.
\end{proof}

\begin{rem}
The result in Theorem \ref{polyn} extends to arbitrary dimension the one obtained for the quaternionic setting in \cite{B}. Indeed, if we consider $m=3$ in \eqref{t2}, by the definition of the polynomials $ \mathcal{P}_k^m(x)$ we get
\begin{eqnarray*}
\mathcal{D} x^k &=& -\frac{2 k!}{(k-1)!} \sum_{s=0}^{k-1} \frac{(k-1)!}{s! (k-1-s)!} \frac{\Gamma(2)}{\Gamma(k+1)} \frac{\Gamma(k-s)}{[\Gamma(1)]^2} \Gamma(1+s) x^{k-1-s} \bar{x}^{s}\\
&=&-2 \sum_{s=0}^{k-1} x^{k-1-s} \bar{x}^s= -2 \sum_{s=1}^k x^{k-s} \bar{x}^{s-1},
\end{eqnarray*}
which is exactly the same result obtained in \cite[Lemma 1]{B}.
\end{rem}

Now, we want to determine a basis of the set of axially harmonic functions. In order to do this we need to figure out the behaviour of the harmonic  generalized CK extension applied to the basic monomial $x_0^k$, building block of the Taylor expansion of a real analytic function.

\begin{thm}
\label{basis}
The space of axially harmonic functions is generated by polynomials of the form
\begin{equation}
\label{f6}
H_k^0(x)= HGCK[x_0^k,0] \qquad H_k^1(x)=HGCK[0, x_0^k], \quad x \in \mathbb{R}^{m+1}
\end{equation}
Moreover, one has that
$$ H_k^0(x)=[\mathcal{Q}_k^m(x)]_0 \qquad \hbox{and} \qquad H_k^1(x)=\frac{m}{k+1} [\mathcal{Q}_{k+1}^m(x)]_1,$$
where the polynomials $ \mathcal{Q}_k^m(x)$ are the Clifford-Appell polynomials, see \eqref{CliffAppe}.
\end{thm}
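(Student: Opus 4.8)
The plan is to reduce both assertions to \emph{Proposition \ref{split}} — concretely the identity \eqref{H1} — together with the fact \eqref{monoGCK} that $\mathcal{Q}_k^m(x) = GCK[x_0^k]$.

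\textbf{The two explicit formulas.} I would first read off \eqref{H1} in two degenerate cases. Taking $A_1 = 0$, so that its primitive is $0$, collapses \eqref{H1} to $HGCK[A_0, 0] = [GCK[A_0]]_0$; choosing $A_0 = x_0^k$ and applying \eqref{monoGCK} then gives
$$H_k^0(x) = HGCK[x_0^k, 0] = [GCK[x_0^k]]_0 = [\mathcal{Q}_k^m(x)]_0.$$
Dually, taking $A_0 = 0$ in \eqref{H1} leaves $HGCK[0, A_1] = m\,[GCK[\mathcal{A}_1]]_1$, where $\mathcal{A}_1$ is a primitive of $A_1$. For $A_1 = x_0^k$ a primitive is $\mathcal{A}_1 = \tfrac{x_0^{k+1}}{k+1}$, and since a constant summand contributes nothing to the one-vector part of a $GCK$-extension, \eqref{monoGCK} yields
$$H_k^1(x) = m\left[GCK\left[\tfrac{x_0^{k+1}}{k+1}\right]\right]_1 = \frac{m}{k+1}\,[\mathcal{Q}_{k+1}^m(x)]_1.$$
In particular both $H_k^0$ and $H_k^1$ are genuine polynomials, being the scalar- and one-vector parts of the Clifford-Appell polynomials, which justifies the word ``polynomials'' in the statement.

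\textbf{The generation statement.} By \emph{Theorem \ref{GCK}} every axially harmonic function on $\Omega$ equals $HGCK[A_0, A_1]$ for a unique pair of real-analytic $A_0, A_1 \in \mathcal{A}(\Omega_1)\otimes\mathbb{R}_m$, and the splitting remark following that theorem gives $HGCK[A_0, A_1] = HGCK[A_0, 0] + HGCK[0, A_1]$. Expanding $A_0 = \sum_k a_k x_0^k$ and $A_1 = \sum_k b_k x_0^k$ in Taylor series and using the linearity of $HGCK$ — manifest from the Bessel-operator expression \eqref{harmo}, which depends linearly on the initial data — I would obtain
$$HGCK[A_0, A_1] = \sum_{k=0}^\infty a_k\, HGCK[x_0^k, 0] + \sum_{k=0}^\infty b_k\, HGCK[0, x_0^k] = \sum_{k=0}^\infty a_k H_k^0(x) + \sum_{k=0}^\infty b_k H_k^1(x),$$
exhibiting an arbitrary element of $\mathcal{AH}(\Omega)$ as a series in the $H_k^0, H_k^1$.

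\textbf{Main obstacle.} The delicate point is justifying that $HGCK$ may be interchanged with the infinite Taylor sums in the displayed identity; this is not automatic. It is, however, supplied by the convergence estimates already established in the proof of \emph{Theorem \ref{GCK}}: the real-analyticity bounds $|\partial_{x_0}^{2j} A_i(x_0)| \leq C_K (2j)!\,\lambda_K^{2j}$ force uniform convergence of the resulting double series on compact subsets of a common axially symmetric neighbourhood of $\Omega_1$, so the terms may be rearranged and the interchange is legitimate. Everything else is a direct substitution into \eqref{H1}.
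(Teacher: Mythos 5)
Your proposal is correct and follows essentially the same route as the paper: both derive the explicit formulas by specializing identity \eqref{H1} of Proposition \ref{split} together with \eqref{monoGCK}, and both obtain the generation statement by Taylor-expanding the initial data in Theorem \ref{GCK} and using linearity of $HGCK$. Your extra attention to justifying the interchange of $HGCK$ with the infinite Taylor sums is a welcome refinement of a step the paper passes over silently, but it does not change the argument.
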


\begin{proof}

By Proposition \ref{split} and formula \eqref{monoGCK} we have
\begin{eqnarray*}
H_k^0(x)&=& HGCK[x_0^k,0]\\
&=&  \left[CGK[x_0^k]\right]_0\\
&=& [\mathcal{Q}_k^m(x)]_{0}.
\end{eqnarray*}

By using similar arguments we have
\begin{eqnarray*}
H_{k}^{1}(x)&=& HGCK[0, x_0^k]\\
&=& \frac{m}{k+1} \left[ CGK[x_{0}^{k+1}]\right]_1\\
&=& \frac{m}{k+1} [\mathcal{Q}_{k+1}^m(x)]_1.
\end{eqnarray*}
All the elements in the set of axially harmonic functions can be written as
$$ f(x)= HGCK[f_{0}(x_0), g_{0}(x_0)],$$
where $f$, $g$ are real analytic functions on the real line. Finally, by formula \eqref{f6} we have
\begin{eqnarray*}
	f(x) &=& HGCK \left[ \sum_{j=0}^\infty x_{0}^j a_{j}, \sum_{\ell=0}^\infty x_{0}^\ell  b_{\ell}\right]\\
	&=& \sum_{j=0}^\infty HGCK[x_{0}^j,0]a_{j}+ \sum_{\ell=0}^\infty HGCK[0, x_{0}^\ell] b_{\ell}\\
	&=& \sum_{j=0}^\infty H_{j}^0(x) a_j+ \sum_{\ell=0}^\infty H_{\ell}^1(x) b_{\ell},
\end{eqnarray*}
where $ \{a_j\}_{j \geq 0}$, $ \{b_{\ell}\}_{\ell \geq 0} \subseteq \mathbb{R}_n$.

\end{proof}

Now, we give an explicit expression of the polynomials $ H_k^0(x)$ and $H_k^1(x)$ in terms of $x$ and $\bar{x}$.
\begin{prop}
Let $k \geq 0$, $m \in \mathbb{N}$ being an odd number and $x \in \mathbb{R}^{n+1}$ then we have
\begin{equation}
\label{bb}
 H_{k}^{0}(x)=\mathcal{P}_k^m(x),
\end{equation}
and
$$ H_{k}^{1}(x)= \frac{m}{(k+1)(m+k)}\sum_{s=0}^{k+1}\binom{k+1}{s}\frac{ \left(\frac{m-1}{2}\right)_s\left( \frac{m-1}{2}\right)_{k+1-s}}{(m-1)_{k+1}} (k+1-2s) x^{k-s} \bar{x}^s.$$
\end{prop}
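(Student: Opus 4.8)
The proof splits into the two asserted identities. For the first, \eqref{bb}, I would simply invoke the computation already carried out inside the proof of Theorem \ref{polyn}. There it is shown that the axially harmonic polynomial $\mathcal{P}_k^m(x)$ has harmonic Cauchy--Kovalevskaya initial data $(x_0^k,0)$: indeed $\mathcal{P}_k^m(x)|_{\underline{x}=0}=x_0^k$ by the second point of Lemma \ref{sum}, while $-\tfrac{1}{m}\partial_{\underline{x}}[\mathcal{P}_k^m(x)]|_{\underline{x}=0}=0$ by the first point of Lemma \ref{sum} together with \eqref{ini}. Since $HGCK$ is the bijection of Theorem \ref{GCK} that recovers a function from its pair of initial data, this yields $\mathcal{P}_k^m(x)=HGCK[x_0^k,0]=H_k^0(x)$, which is precisely \eqref{bb}.

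For the second identity I would start from the expression $H_k^1(x)=\tfrac{m}{k+1}[\mathcal{Q}_{k+1}^m(x)]_1$ furnished by Theorem \ref{basis}, and compute the $1$-vector part of $\mathcal{Q}_{k+1}^m(x)=\sum_{s=0}^{k+1}\mathcal{T}_s^{k+1}(m)\,x^{k+1-s}\bar{x}^s$ term by term. The key structural remark is that $x\bar{x}=\bar{x}x=|x|^2$ is a scalar, so $x$ and $\bar{x}$ commute and each monomial $x^{k+1-s}\bar{x}^s$ is a paravector lying in the plane $\mathbb{C}_{\underline{\omega}}$. Consequently its $1$-vector part is obtained by subtracting its Clifford conjugate, namely $[x^{k+1-s}\bar{x}^s]_1=\tfrac12\big(x^{k+1-s}\bar{x}^s-x^{s}\bar{x}^{k+1-s}\big)$, since conjugation fixes the scalar part and flips the sign of the $1$-vector part.

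I would then symmetrize the resulting sum: reindexing the conjugate terms by $s\mapsto k+1-s$ collects the coefficient of $x^{k+1-s}\bar{x}^s$ as $\tfrac12\big(\mathcal{T}_s^{k+1}(m)-\mathcal{T}_{k+1-s}^{k+1}(m)\big)$. The heart of the computation is evaluating this difference in closed form. Writing out $\mathcal{T}_s^{k+1}(m)$ via \eqref{f0} and using the Pochhammer identity $\left(\tfrac{m+1}{2}\right)_j=\tfrac{m-1+2j}{m-1}\left(\tfrac{m-1}{2}\right)_j$ to convert the $\tfrac{m+1}{2}$–products into $\tfrac{m-1}{2}$–products, the two products $\left(\tfrac{m+1}{2}\right)_{k+1-s}\left(\tfrac{m-1}{2}\right)_s$ and $\left(\tfrac{m+1}{2}\right)_{s}\left(\tfrac{m-1}{2}\right)_{k+1-s}$ share the common factor $\left(\tfrac{m-1}{2}\right)_s\left(\tfrac{m-1}{2}\right)_{k+1-s}$, and their difference collapses to $\tfrac{2(k+1-2s)}{m-1}\left(\tfrac{m-1}{2}\right)_s\left(\tfrac{m-1}{2}\right)_{k+1-s}$. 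Combining this with the ratio $\tfrac{(m-1)_{k+1}}{(m)_{k+1}}=\tfrac{m-1}{m+k}$ gives $\mathcal{T}_s^{k+1}(m)-\mathcal{T}_{k+1-s}^{k+1}(m)=\tfrac{2(k+1-2s)}{m+k}\binom{k+1}{s}\tfrac{(\frac{m-1}{2})_s(\frac{m-1}{2})_{k+1-s}}{(m-1)_{k+1}}$. Multiplying by the overall factor $\tfrac12$ from the $1$-vector extraction and then by $\tfrac{m}{k+1}$ produces exactly the stated coefficient (with the degree-$(k+1)$ monomial $x^{k+1-s}\bar{x}^s$), completing the proof.

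I expect the Pochhammer bookkeeping in this last step — in particular reducing the difference of the two half-integer products to the single linear factor $(k+1-2s)$ — to be the only delicate point; the $1$-vector extraction and the reindexing are purely formal once the commutativity $x\bar{x}=\bar{x}x=|x|^2$ has been recorded.
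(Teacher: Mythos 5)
Your proof is correct, and while the first identity is handled exactly as in the paper (both arguments reduce to the fact, established inside the proof of Theorem \ref{polyn} via Lemma \ref{sum}, that $\mathcal{P}_k^m(x)$ is axially harmonic with $HGCK$ initial data $(x_0^k,0)$), your treatment of the second identity takes a genuinely different route. The paper computes $[\mathcal{Q}_{k+1}^m(x)]_1$ by subtracting the scalar part, which it identifies with $\mathcal{P}_{k+1}^m(x)$ using the first identity at degree $k+1$; the coefficient of $x^{k+1-s}\bar{x}^s$ then becomes $\mathcal{T}_s^{k+1}(m)-T_s^{k+1}(m)$, evaluated with the Pochhammer identity $(a+1)_\ell=\tfrac{a+\ell}{a}(a)_\ell$. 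You instead extract the $1$-vector part of each monomial directly as $\tfrac12\bigl(x^{k+1-s}\bar{x}^s-x^s\bar{x}^{k+1-s}\bigr)$ (legitimate because each monomial is a paravector in $\mathbb{C}_{\underline{\omega}}$ and Clifford conjugation is an anti-automorphism fixing scalars and negating $1$-vectors), and after reindexing you must evaluate $\tfrac12\bigl(\mathcal{T}_s^{k+1}(m)-\mathcal{T}_{k+1-s}^{k+1}(m)\bigr)$; your Pochhammer reduction of this difference to $\tfrac{k+1-2s}{m+k}\binom{k+1}{s}\tfrac{(\frac{m-1}{2})_s(\frac{m-1}{2})_{k+1-s}}{(m-1)_{k+1}}$ is correct and yields the stated coefficient. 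Your version has the small advantage of not invoking the polynomials $\mathcal{P}_{k+1}^m(x)$ or the coefficients $T_s^{k+1}(m)$ at all in the second half, making that part self-contained; the paper's version reuses \eqref{bb} and keeps all manipulations at the level of the two explicit coefficient families. You also correctly note that the monomial in the final formula must be $x^{k+1-s}\bar{x}^s$ (consistent with the degree-$(k+1)$ polynomial $[\mathcal{Q}_{k+1}^m(x)]_1$ and with the paper's own intermediate formula \eqref{1vec}), whereas the printed statement reads $x^{k-s}\bar{x}^s$, which appears to be a typo.
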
	
\begin{proof}
By Theorem \ref{basis} and Theorem \ref{polyn} we have that
$$ H_0^k(x)=HGCK[x_0^k,0]= \mathcal{P}_k^m(x).$$
Now, by \eqref{bb} we have deduce that
$$ \mathcal{Q}_k^m(x)=[\mathcal{Q}_k^m(x)]_0+[\mathcal{Q}_k^m(x)]_1=\mathcal{P}_k^m(x)+[\mathcal{Q}_k^m(x)].$$
By Theorem \ref{basis} and using the definition of the Clifford-Appell polynomials and the definition of the polynomials $ \mathcal{P}_k^m(x)$ we have
\begin{equation}
\label{1vec}
H_k^1(x)= \frac{m}{k+1}[\mathcal{Q}_{k+1}^m(q)]_1=\frac{m}{k+1}\left[\mathcal{Q}_{k+1}^m(x)-\mathcal{P}_{k+1}^m(x)\right]=\frac{m}{k+1}\sum_{s=0}^{k+1}\left[\mathcal{T}_s^{k+1}(m)-T_s^{k+1}(m)\right] x^{k+1-s} \bar{x}^s.
\end{equation}
By the definition of the coefficients $\mathcal{T}_s^k(m)$, see \eqref{f0}, and $T_s^k(m)$, see Definition \ref{polyH}, and the following fact on the pochhammer symbol
$$ (a+1)_{\ell}= \frac{a+\ell}{a} (a)_{\ell}, \qquad a, \ell \in \mathbb{N},$$
we have
\begin{eqnarray}
\nonumber
\mathcal{T}_s^{k+1}(m)-T_s^{k+1}(m)&=& \binom{k+1}{s} \left(\frac{m-1}{2}\right)_s \left[ \frac{\left( \frac{m+1}{2}\right)_{k+1-s}}{(m)_{k+1}}- \frac{\left( \frac{m-1}{2}\right)_{k+1-s}}{(m-1)_{k+1}}\right]\\
\nonumber
&=& \binom{k+1}{s}\frac{ \left(\frac{m-1}{2}\right)_s\left( \frac{m-1}{2}\right)_{k+1-s}}{(m-1)_{k+1}}\left[ \frac{\left(\frac{m-1}{2}+k+1-s\right)(m-1)}{\left(\frac{m-1}{2}\right)(m+k)}-1 \right]\\
\label{1vecb}.
&=& \binom{k+1}{s}\frac{ \left(\frac{m-1}{2}\right)_s\left( \frac{m-1}{2}\right)_{k+1-s}}{(m-1)_{k+1}} (k+1-2s)
\end{eqnarray}
Hence the result follows by plugging \eqref{1vecb} into \eqref{1vec} we get the result.
\end{proof}

Now, we show that the polynomials $H_k^1(x)$ can be  written in terms of integrals over the sphere $ \mathbb{S}^{m-1}$ involving functions of plane wave type. 
\begin{prop}
	Let $k \in \mathbb{N}$ and $m \in \mathbb{N}$ being an odd number. Then we have
	$$ H_k^1(x)= \frac{\Gamma(m+1)}{\pi^{\frac{m}{2}}} \int_{\mathbb{S}^{m-1}} \sinh \left(\langle \underline{x}, \underline{\omega} \rangle \underline{\omega} \partial_{x_0}\right) x_0^k.$$
\end{prop}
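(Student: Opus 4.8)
The plan is to reduce the statement to the two results already in hand: the identification of $H_k^1$ coming from Theorem~\ref{basis}, and the plane wave decomposition of the harmonic generalized CK extension from Theorem~\ref{plane}. First I would recall that Theorem~\ref{basis} gives $H_k^1(x)=HGCK[0,x_0^k]$, i.e. $H_k^1$ is exactly the harmonic CK extension of the pair $(A_0,A_1)=(0,x_0^k)$. Thus the only remaining task is to feed this particular pair of initial functions into the integral representation, exactly as was done for $H_k^0=\mathcal{P}_k^m$ in the preceding proposition.

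Next I would invoke Theorem~\ref{plane} with $A_0\equiv 0$ and $A_1=x_0^k$. Since $A_0$ vanishes, the $\cosh$ contribution drops out entirely and only the odd (one-vector) part carried by the $\sinh$ term survives, yielding
\[
H_k^1(x)=\frac{\Gamma\!\left(\tfrac{m}{2}\right)}{2\pi^{\frac{m}{2}}}\int_{\mathbb{S}^{m-1}} m\,\sinh\!\left(\langle \underline{x},\underline{\omega}\rangle\,\underline{\omega}\,\partial_{x_0}\right)[x_0^k]\,dS_{\underline{\omega}},
\]
which already has the claimed plane wave shape. No convergence questions arise, because $x_0^k$ is entire and the series/integral interchanges are precisely those justified inside the proof of Theorem~\ref{plane}; so the heart of the argument is purely a matter of specializing the initial data.

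The one genuinely arithmetic step, and the place where I would be most careful, is collecting the scalar prefactors. Pulling the factor $m$ out of the integral and using the functional equation $\tfrac{m}{2}\,\Gamma\!\left(\tfrac{m}{2}\right)=\Gamma\!\left(\tfrac{m}{2}+1\right)$, the constant becomes $\dfrac{m\,\Gamma\!\left(\frac{m}{2}\right)}{2\pi^{m/2}}=\dfrac{\Gamma\!\left(\frac{m}{2}+1\right)}{\pi^{m/2}}$, giving the desired integral representation up to this normalization. I would double-check the exact Gamma factor against the stated constant, since this is exactly the kind of place where a duplication-formula simplification, or the indexing convention used for the normalizing constant, can enter; modulo that bookkeeping, the result follows immediately by combining Theorem~\ref{basis} and Theorem~\ref{plane}, in complete parallel with the $H_k^0$ case.
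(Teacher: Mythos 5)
Your proposal is correct and follows exactly the paper's own (one-line) proof: identify $H_k^1(x)=HGCK[0,x_0^k]$ via Theorem~\ref{basis} and then specialize the plane wave decomposition of Theorem~\ref{plane} to the initial data $(A_0,A_1)=(0,x_0^k)$. Your careful bookkeeping of the prefactor is also right: the combination $\frac{m}{2}\Gamma\left(\frac{m}{2}\right)=\Gamma\left(\frac{m}{2}+1\right)$ yields the constant $\Gamma\left(\frac{m}{2}+1\right)/\pi^{m/2}$, so the constant $\Gamma(m+1)/\pi^{m/2}$ printed in the statement appears to be a typo (and the missing $dS_{\underline{\omega}}$ in the displayed integral is another).
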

\begin{proof}
	The result follows by combining Theorem \ref{basis} and Theorem \ref{plane}.
\end{proof}
	
The Clifford-Appell polynomials introduced in \eqref{CliffAppe} satisfy the so-called Appell property. In classic terms, a sequence of polynomials $ \{P_k\}_{k \in \mathbb{N}}$ of the real variable $x_0$ is called an Appell sequence if it satisfies the property
$$ \frac{d}{d x_0} P_k(x_0)=k P_{k-1}(x_0).$$
The classical monomials $ \{x_0^k\}_{k \in \mathbb{N}}$ as well as the Hermite, Bernoulli, and Euler polynomials are interesting examples of such polynomials. 
\\ In Clifford analysis, Appell sequences have been studied in relation to the action of the so-called hypercomplex derivative ( $ \mathcal{\overline{D}}:= \frac{1}{2}(\partial_{x_0}- \partial_{\underline{x}})$) on the Clifford-Appell polynomials, i.e. 
$$ \frac{1}{2} \overline{\mathcal{D}} \mathcal{Q}_k^m(x)=k \mathcal{Q}_{k-1}^m(x).$$	
This is a direct consequence of the monogenicity of the polynomials $ \mathcal{Q}_k^m(x)$. 
\\In the following result we express a sort of Appell properties for the polynomials $H_k^0(x)$  and $H_k^1(x)$. 	
	
\begin{thm}
Let $k \in \mathbb{N}$ and $x \in \mathbb{R}^{n+1}$. Then the polynomials $H_k^0(x)$ and $H_k^1(x)$ hold the following Appell-like properties
$$ \begin{cases}
\partial_{x_0} H_k^0(x)=- \langle \partial_{\underline{x}}, H_{k-1}^1(x) \rangle\\
\partial_{\underline{x}} H_k^0(x)= \frac{k}{m}\partial_{x_0} H_{k-1}^1(x)\\
\partial_{\underline{x}} \wedge H_{k-1}^1(x)=0.
\end{cases}
$$
\end{thm}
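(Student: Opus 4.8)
The plan is to reduce the whole statement to the Clifford--Appell polynomials $\mathcal{Q}_k^m$, whose analytic behaviour is already available, and then to read off the three relations by projecting onto the homogeneous parts of the Clifford algebra. By Theorem \ref{basis} we have $H_k^0(x)=[\mathcal{Q}_k^m(x)]_0$ and $H_k^1(x)=\frac{m}{k+1}[\mathcal{Q}_{k+1}^m(x)]_1$, equivalently $[\mathcal{Q}_k^m(x)]_1=\frac{k}{m}H_{k-1}^1(x)$. The two structural inputs I would use about the $\mathcal{Q}_k^m$ are their monogenicity $\mathcal{D}\mathcal{Q}_k^m=(\partial_{x_0}+\partial_{\underline{x}})\mathcal{Q}_k^m=0$ and the Appell property $\tfrac12\overline{\mathcal{D}}\mathcal{Q}_k^m=k\mathcal{Q}_{k-1}^m$ recalled just above the statement. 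Adding and subtracting these gives $\partial_{x_0}\mathcal{Q}_k^m=k\mathcal{Q}_{k-1}^m$ and $\partial_{\underline{x}}\mathcal{Q}_k^m=-k\mathcal{Q}_{k-1}^m$, which are the only dynamical facts the argument needs.

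First I would record how the two operators interact with the grade projections $[\,\cdot\,]_j$. Since $\partial_{x_0}$ is a scalar operator it commutes with every projection, so $\partial_{x_0}[\mathcal{Q}_k^m]_0=k[\mathcal{Q}_{k-1}^m]_0$ and $\partial_{x_0}[\mathcal{Q}_k^m]_1=k[\mathcal{Q}_{k-1}^m]_1$. The Dirac operator $\partial_{\underline{x}}$, on the other hand, is grade--changing: acting on a scalar it produces a $1$-vector, while acting on a $1$-vector $\underline{v}$ it splits, following the decomposition $\partial_{\underline{x}}\underline{v}=-\langle\partial_{\underline{x}},\underline{v}\rangle+\partial_{\underline{x}}\wedge\underline{v}$ used in Section 2, into a scalar part $-\langle\partial_{\underline{x}},\underline{v}\rangle$ and a $2$-vector part $\partial_{\underline{x}}\wedge\underline{v}$. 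Writing $\mathcal{Q}_k^m=[\mathcal{Q}_k^m]_0+[\mathcal{Q}_k^m]_1$ and inserting this into $\partial_{\underline{x}}\mathcal{Q}_k^m=-k\mathcal{Q}_{k-1}^m$, I would then compare the grade-$0$, grade-$1$ and grade-$2$ components separately, the decisive point being that the right-hand side $-k\mathcal{Q}_{k-1}^m$ is again of axial type and hence carries no $2$-vector part.

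The three comparisons give the statement. The grade-$2$ matching reads $\partial_{\underline{x}}\wedge[\mathcal{Q}_k^m]_1=0$, and since $H_{k-1}^1$ is a scalar multiple of $[\mathcal{Q}_k^m]_1$ this yields $\partial_{\underline{x}}\wedge H_{k-1}^1=0$; alternatively one may note that $H_{k-1}^1$ is a radial field of the form $\underline{x}\,g(x_0,|\underline{x}|)$, whose wedge vanishes because $\underline{x}\wedge\underline{x}=0$ and $\partial_{\underline{x}}g$ is itself proportional to $\underline{x}$. The grade-$0$ matching $-\langle\partial_{\underline{x}},[\mathcal{Q}_k^m]_1\rangle=-k[\mathcal{Q}_{k-1}^m]_0$, combined with $\partial_{x_0}H_k^0=\partial_{x_0}[\mathcal{Q}_k^m]_0=k[\mathcal{Q}_{k-1}^m]_0$, produces the first relation, while the grade-$1$ matching $\partial_{\underline{x}}[\mathcal{Q}_k^m]_0=-k[\mathcal{Q}_{k-1}^m]_1$ together with $\partial_{x_0}H_{k-1}^1=\frac{m}{k}\partial_{x_0}[\mathcal{Q}_k^m]_1=m[\mathcal{Q}_{k-1}^m]_1$ produces the second. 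The one genuinely delicate point, and the main obstacle, is the careful bookkeeping of the normalizing constants and signs arising from the index shift $H_{k-1}^1=\frac{m}{k}[\mathcal{Q}_k^m]_1$ and from the Clifford sign in $[\partial_{\underline{x}}\underline{v}]_0=-\langle\partial_{\underline{x}},\underline{v}\rangle$; everything else is a direct grade-by-grade identification, so I would verify these constants against the low-degree cases (for instance $k=1,2$, using $\mathcal{Q}_1^m=x_0+\tfrac1m\underline{x}$) before committing to the final form.
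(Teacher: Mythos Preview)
Your proposal is correct and follows essentially the same route as the paper's own proof. Both arguments rest on the identifications $H_k^0=[\mathcal{Q}_k^m]_0$ and $H_{k-1}^1=\frac{m}{k}[\mathcal{Q}_k^m]_1$ from Theorem~\ref{basis}, combine the monogenicity of $\mathcal{Q}_k^m$ with its Appell property to obtain $\partial_{x_0}\mathcal{Q}_k^m=k\mathcal{Q}_{k-1}^m$ and $\partial_{\underline{x}}\mathcal{Q}_k^m=-k\mathcal{Q}_{k-1}^m$, and then read off the three relations by matching the scalar, $1$-vector and $2$-vector components; the paper carries this out in two passes (first deriving the intermediate system \eqref{Cliff8}, then the $\partial_{x_0}$ identities \eqref{Cliff6}--\eqref{Cliff7}, and finally substituting), whereas you do the grade-by-grade comparison in one sweep, but the substance is identical.
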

\begin{proof}
By Theorem \ref{basis} we deduce that we can write the Clifford-Appell polynomials in the following way
\begin{equation}
\label{Cliff}
\mathcal{Q}_{k}^m(x)= H_{k}^{0}(x)+\frac{k}{m}H_{k-1}^{1}(x).
\end{equation}
Now, we act with the Dirac operator on $ \mathcal{Q}_k^m(x)$ and we examine its $k$-vector structure:
\begin{equation}
\label{Cliff3}
\partial_{\underline{x}}\mathcal{Q}_k^m(x)=[\partial_{\underline{x}} \mathcal{Q}_{k}^m(x)]_0+[\partial_{\underline{x}} \mathcal{Q}_{k}^m(x)]_1+[\partial_{\underline{x}} \mathcal{Q}_{k}^m(x)]_2,
\end{equation}
where
$$ 
\begin{cases}
[\partial_{\underline{x}} \mathcal{Q}_k^m(x)]_0=- \langle \partial_{\underline{x}}, H_{k-1}^1(x) \rangle\\
[\partial_{\underline{x}} \mathcal{Q}_k^m(x)]_1=\partial_{\underline{x}} H_{k}^0(x)\\
[\partial_{\underline{x}} \mathcal{Q}_k^m(
x)]_2= \partial_{\underline{x}} \wedge H_{k-1}^1(x).
\end{cases}
$$
On the other hand, since the Clifford-Apell are monogenic and by \eqref{Cliff} we get
\begin{eqnarray}
\nonumber
\partial_{\underline{x}} \mathcal{Q}_k^m(x) 
\nonumber
&=& - \partial_{x_0} \mathcal{Q}_k^m(x)\\
\nonumber
&=& -k \mathcal{Q}_{k-1}^m(x)\\
\label{Cliff1}
&=& -kH_{k-1}^0(x)-\frac{k(k-1)}{m} H_{k-2}^1(x).
\end{eqnarray}
By making equal formula \eqref{Cliff3} and \eqref{Cliff1} we get
\begin{equation}
\label{Cliff8}
\begin{cases}
\langle \partial_{\underline{x}}, H_{k-1}^1(x) \rangle =k H_{k-1}^0(x)\\
\partial_{\underline{x}} H_{k}^0(x)=-\frac{k(k-1)}{m} H_{k-2}^1(x)\\
\partial_{\underline{x}} \wedge H_{k-1}^1(x)=0.
\end{cases}
\end{equation}
Now, we observe that
\begin{equation}
\label{Cliff2}
\partial_{x_0} \mathcal{Q}_k^m(x)= \partial_{x_0} H_{k}^0(x)+ \frac{k}{m} \partial_{x_0} H_{k-1}^1(x).
\end{equation}
At the same time we have
\begin{eqnarray}
\label{Cliff5}
\partial_{x_0} \mathcal{Q}_k^m(x) = - k H_{k-1}^0(x)-\frac{k(k-1)}{m} H_{k-2}^1(x).
\end{eqnarray}
Therefore by putting equal \eqref{Cliff2} and \eqref{Cliff5} we get
\begin{equation}
\label{Cliff6}
\partial_{x_0} H_{k}^0 (x)=-k H_{k-1}^0(x),
\end{equation}
and
\begin{equation}
\label{Cliff7}
 \partial_{x_0} H_{k-1}^1(x)=-(k-1) H_{k-2}^{1}(x).
\end{equation}
Finally, by substituting \eqref{Cliff6} and \eqref{Cliff7} in the system \eqref{Cliff8} we get the statement.
\end{proof}

\section{Concluding remarks}

In this paper, we have developed a generalized CK-extension for axially harmonic functions. This CK-extension involves two initial functions, upon which two power series of differential operators act. Additionally, we have established a connection between the factorization of the Fueter-Sce map and the generalized CK-extension for axially harmonic functions.

A natural extension of axially harmonic functions is represented by axially polyharmonic functions, which are axial functions in the kernel of the operator $ \Delta^{\ell}_{\mathbb{R}^{m+1}} $, with $ \ell \in \mathbb{N} $ (see \cite{Aro}). The relationship between this class of functions and the Fueter-Sce mapping theorem has already been explored in \cite{Fivedim}.

It is therefore natural to ask whether a generalized CK-extension for axially polyharmonic functions can be constructed and whether a deeper connection with the Fueter-Sce theorem can be established. This question will be the focus of a forthcoming paper.

\end{document}